\newcounter{dummy} \numberwithin{dummy}{section}
\newtheorem{theorem}[dummy]{Theorem}
\newtheorem{corollary}[dummy]{Corollary}
\newtheorem{lemma}[dummy]{Lemma}
\newtheorem{proposition}[dummy]{Proposition}
\theoremstyle{remark}
\newtheorem{remark}[dummy]{Remark}
\newtheorem{example}[dummy]{Example}
\DeclareFontFamily{U}{mathx}{\hyphenchar\font45}
\DeclareFontShape{U}{mathx}{m}{n}{
      <5> <6> <7> <8> <9> <10>
      <10.95> <12> <14.4> <17.28> <20.74> <24.88>
      mathx10
      }{}
\DeclareSymbolFont{mathx}{U}{mathx}{m}{n}
\DeclareMathAccent{\widecheck}{0}{mathx}{"71}
\DeclareMathAccent{\wideparen}{0}{mathx}{"75}
\DeclareMathOperator{\real}{\mathbb R}
\DeclareMathOperator{\integer}{\mathbb Z}
\newcommand{\calB}{\mathcal B}
\newcommand{\calE}{\mathcal E}
\newcommand{\calF}{\mathcal F}
\newcommand{\calH}{\mathcal H}
\newcommand{\calL}{\mathcal L}
\newcommand{\calR}{\mathcal R}
\newcommand{\calV}{\mathcal V}
\newcommand{\frakg}{\mathfrak g}
\newcommand{\frakh}{\mathfrak h}
\newcommand{\frakk}{\mathfrak k}
\newcommand{\scrF}{\mathscr F}
\DeclareMathOperator{\Ann}{Ann}
\DeclareMathOperator{\Sym}{Sym}
\DeclareMathOperator{\Dom}{Dom}
\DeclareMathOperator{\id}{id}
\DeclareMathOperator{\inc}{inc}
\DeclareMathOperator{\pr}{pr}
\DeclareMathOperator{\rank}{rank}
\DeclareMathOperator{\spn}{span}
\DeclareMathOperator{\tr}{tr}
\newcommand{\prob}{\mathbb P}
\newcommand{\expect}{\mathbb E}
\newcommand{\equivM}{\stackrel{\rm loc}{=}}
\DeclareMathOperator{\ad}{ad}
\DeclareMathOperator{\tensorg}{\mathbf g}
\DeclareMathOperator{\dv}{div}
\DeclareMathOperator{\grad}{grad}
\DeclareMathOperator{\Ric}{Ric}
\DeclareMathOperator{\vol}{vol}
\DeclareMathOperator{\dvol}{\text{$d$}vol}
\newcommand{\tensorh}{\mathbf h}
\newcommand{\rnabla}{\mathring{\nabla}}
\newcommand{\znabla}{\,\mathring{\phantom{\nabla}\!\!\!\!\!\!}\mathring{\!\!\nabla}}
\newcommand{\shh}{\sharp^{{\mathbf h}^*}}
\newcommand{\srL}{\Delta_{\mathbf{h}}}
\newcommand{\roughL}{\Delta_{\mathbf{h}}^\prime}
\newcommand{\tensorv}{\mathbf v}
\newcommand{\verticalL}{\Delta_{\mathbf{v}}}
\newcommand{\shv}{\sharp^{{\mathbf v}^*}}
\newcommand{\shvk}{\sharp^{{\mathbf v}^*_k}}
\newcommand{\tensors}{\mathbf s}
\newcommand{\tensorq}{\mathbf q}
\newcommand{\metricd}{\mathsf d}
\newcommand{\ve}{\varepsilon}
\newcommand{\sfGamma}{\mathsf \Gamma}
\newcommand{\RicH}{\Ric_{\mathcal{H}}^{\mathstrut}}
\newcommand{\RicV}{\Ric_{\mathcal{V}}^{\mathstrut}}
\newcommand{\RicHV}{\Ric_{\mathcal{HV}}}
\newcommand{\rRicH}{\rho_{\mathcal{H}}^{\mathstrut}}
\newcommand{\MRicHV}{\mathscr{M}_{\mathcal{HV}}}
\newcommand{\McalR}{\mathscr{M}_{\mathcal{R}}^{\mathstrut}}
\newcommand{\mcalR}{m_{\mathcal{R}}^{\mathstrut}}
\newcommand{\mcalRvar}{m_{\mathcal{R}}}
\newcommand{\MII}{\mathscr{M}_{\rnabla \tensorv^*}}
\newcommand{\rLv}{\rho_{\roughL \! \tensorv^*}}
\newcommand\newbullet{{\kern.8pt\displaystyle\centerdot\kern.8pt}}
\newcommand\newbulletsub{{\centerdot\kern.8pt}}
\numberwithin{equation}{section}
\title[Curvature-dimension inequalities on sR-manifolds, Part~II]{Curvature-dimension inequalities on sub-Riemannian manifolds obtained from Riemannian foliations, Part~II}
\author[E. Grong, A. Thalmaier]{Erlend Grong \\ Anton Thalmaier}
\address{Mathematics Research Unit, University of Luxembourg, Luxembourg.}
\email{erlend.grong@uni.lu}
\address{Mathematics Research Unit, University of Luxembourg, Luxembourg.}
\email{anton.thalmaier@uni.lu}
\subjclass[2010]{58J35 (53C17,58J99)}
\keywords{Sub-Riemannian geometry; Diffusions of hypoelliptic operators; Curvature-dimension inequality; Riemannian foliations}
\begin{document}

\begin{abstract}
Using the curvature-dimension inequality proved in Part~I, we look
  at consequences of this inequality in terms of the interaction
  between the sub-Riemannian geometry and the heat semigroup $P_t$
  corresponding to the sub-Laplacian. We give bounds for the gradient,
  entropy, a Poincar\'e inequality and a Li-Yau type inequality. These
  results require that the gradient of $P_t f$ remains uniformly
  bounded whenever the gradient of $f$ is bounded and we give several
  sufficient conditions for this to hold.
\end{abstract}

\maketitle

\section{Introduction} \label{sec:Introduction} One of the most
important relations connecting the geometric properties of a
Riemannian manifold $(M, \tensorg)$ with the properties of its Laplace
operator $\Delta$ is the curvature-dimension inequality given by
$$\frac{1}{2} \Delta \|\grad f\|^2_{\tensorg} 
- \langle\grad f, \grad \Delta f\rangle_{\tensorg} \geq \frac{1}{n} (\Delta f)^2 + \rho \|\grad f \|^2_{\tensorg}.$$
In the above formula, $n = \dim M$, $\rho$ is a lower bound for the
Ricci curvature of~$M$ and~$f$ is any smooth function. In the notation
of Bakry and \'Emery~\cite{BaEm85}, this inequality is written as
$$\sfGamma_2(f) \geq \frac{1}{n} (L f)^2 + \rho \sfGamma(f), \qquad L = \Delta,$$
where\begin{align} \label{Sec1Gamma} \sfGamma(f,g) & = \frac{1}{2}
  \left(L (fg) - f L g - g L f \right) , \qquad & \sfGamma(f) =
  \sfGamma(f,f), \\ \label{Sec1Gamma2} \sfGamma_2(f, g) & = \frac{1}{2}
  \left(L \sfGamma(f,g) - \sfGamma(f ,L g) - \sfGamma(L f, g)\right) ,
  \qquad & \sfGamma_2(f) = \sfGamma_2(f,f).
\end{align}
For a good overview of results that follow from this inequality, see
\cite{Wan04} and references therein.

This approach has been generalized by F.~Baudoin and N.~Garofalo in
\cite{BaGa12} to sub-Riemannian manifolds with transverse
symmetries. A sub-Riemannian manifold is a connected manifold $M$ with
a positive definite metric tensor~$\tensorh$ defined only on a
subbundle~$\calH$ of the tangent bundle $TM$. As is typical, we will
assume that sections of~$\calH$ and their iterated Lie brackets span
the entire tangent bundle. This is a sufficient condition for the
sub-Riemannian structure $(\calH, \tensorh)$ to give us a metric
$\metricd_{cc}$ on $M$, where the distance between two points with
respect to $\metricd_{cc}$ is defined by taking the infimum of the
lengths of all curves tangent to~$\calH$ that connect the mentioned
points. For the definition of sub-Riemannian manifolds with transverse
symmetries, see \cite[Section 2.3]{BaGa12} or Part~I, Section 4.3. We
extended this formalism in Part~I to sub-Riemannian manifold with an
integrable metric-preserving complement,
consisting of all sub-Riemannian manifolds that can be obtained from
Riemannian foliations.

Given such a metric-preserving complement $\calV$ to $\calH$, there
exist a canonical corresponding choice of second order operator
$\roughL$ which locally satisfies
$$\roughL = \sum_{i=1}^n A_i^2 + \text{lower order terms}.$$
with $A_1, \dots, A_n$ being a local orthonormal basis of $\calH$. We
proved in Part~I that under mild conditions, there exist constants $n,
\rho_1, \rho_{2,0}$ and $\rho_{2,1}$ such that the operator satisfies
a generalized version of the curvature-dimension inequality
$$\sfGamma_2(f) + \ell \sfGamma^{\tensorv^*}_2(f) \geq \frac{1}{n} (Lf) 
+ (\rho_1 - \ell^{-1}) \sfGamma(f) + (\rho_{2,0} + \ell \rho_{2,1}) \sfGamma^{\tensorv^*}(f),$$
for any $f \in C^\infty(M)$ and $\ell >0$. Here, $\sfGamma(f)$ and
$\sfGamma_2(f)$ is defined as in \eqref{Sec1Gamma} and
\eqref{Sec1Gamma2} with $L = \roughL$, while $\sfGamma^{\tensorv^*}(f)
= \tensorv^*(df,df)$ for some $\tensorv^* \in \Gamma(\Sym^2 TM)$ and
$\sfGamma^{\tensorv^*}_2(f)$ is defined analogously to $\sfGamma_2(f)$.
We also gave a geometrical interpretation of these constants. A short
summary of the results of Part~I is given in
Section~\ref{sec:Summary}.

In this paper, we want to explore how this inequality can be used to
obtain results for the heat semigroup of $\roughL$. In
Section~\ref{sec:GradientBound}, we will address the question of
whether a smooth bounded function with bounded gradient under the action of the heat
semigroup will continue to have a uniformly bounded gradient. This
will be an important condition for the results to follow. For a
complete Riemannian manifold, a sufficient condition for this to hold
is that the Ricci curvature is bounded from below, see
e.g. \cite{Yau78} and \cite[Eq 1.4]{Tha98}. We are not able to give
such a simple formulation for the sub-Laplacian, however, we are able
to prove that it holds in many cases, including fiber bundles with
compact fibers and totally geodesic fibers. This was only previously
only known to hold for sub-Riemannian manifolds with transverse
symmetries of Yang-Mills type \cite[Theorem~4.3]{BaGa12}, along with some isolated
examples in \cite[Section 4]{Wan13} and \cite[Appendix]{BaWa14}. We give several
results using the curvature-dimension inequality of Part~I that only
rely on the boundedness of the gradient under the heat flow. Our results generalize theorems found in \cite{BaGa12,BaBo12,BBG14}. In
particular, if $\roughL$ is a sub-Laplacian on $(M, \calH, \tensorh)$
satisfying our generalized curvature-dimension inequality, then under
certain conditions (analogous to positive Ricci curvature in Riemannian
geometry) we have the following version of the Poincar\'e inequality
$$\| f - f_M \|_{L^2(M,\vol)} \leq \frac{1}{\sqrt{\alpha}}\, \| df\|_{L^2(\tensorh^*)}.$$
Here, $\alpha$ is a positive constant, $\tensorh^*$ is the co-metric of $(\calH, \tensorh)$, $f_M$ is the mean value of
a compactly supported function $f$ and for any $\eta \in \Gamma(T^*M)$ we use
$$\|\eta\|_{L^2(\tensorh^*)} := \int_M \tensorh^*(\eta, \eta) \dvol.$$

In Section~\ref{sec:GeodesicFibers} we look at results which require
the additional assumption that $\sfGamma^{\tensorv^*}(f,
\sfGamma(f)) = \sfGamma(f,
\sfGamma^{\tensorv^*}(f))$. This is important for inequalities
involving logarithms. We give a description of what this condition
means geometrically and discuss results that follow from it, such as a
Li-Yau type inequality and parabolic Harnack inequality.

In Section~\ref{sec:Comments} we give some concrete examples, mostly
focused on case of sub-Riemannian structures appearing from totally
geodesic foliations with a complete metric. Here, all previously
mentioned assumptions are satisfied. In this case, we also give a
comment on how the invariants in our sub-Riemannian
curvature-dimension inequality compare to the Riemannian curvature of
an extended metric.

In parallel with the development of our paper, part of the results of Theorem~\ref{th:CondARiemann} and Lemma~\ref{lemma:iff} was given in \cite{BKW14} for the case of sub-Riemannian obtained from Riemannian foliations with totally geodesic leaves that are of Yang-Mills type.

\subsection{Notations and conventions}
Unless otherwise stated, all manifolds are connected. If $\calE \to M$
is any vector bundle over a manifold $M$, its space of smooth sections
is written $\Gamma(\calE)$. If $s \in \Gamma(\calE)$, we generally
prefer to write $s|_x$ rather than $s(x)$ for its value in $x \in
M$. By a metric tensor~$\tensors$ on~$\calE$, we mean smooth section
of $\Sym^2 \calE^*$ which is positive definite or at least positive
semi-definite. For every such metric tensor, we write
$\|e\|_{\tensors} = \sqrt{\tensors(e,e)}$ for any $e \in \calE$ even
if $\tensors$ is only positive semi-definite. All metric tensors are
denoted by bold, lower case Latin letters (e.g. $\tensorh,
\tensorg,\dots$). We will only use the term Riemannian metric for a
positive definite metric tensor on the tangent bundle. If $\tensorg$
is a Riemannian metric, we will use $\tensorg^*$, $\wedge^k
\tensorg^*, \dots$ for the metric tensors induced on $T^*M,
\bigwedge^k T^*M, \dots$.

If $\alpha$ is a form on a manifold $M$, its contraction or interior
product by a vector field~$A$ will be denoted by either
$\iota_A\alpha$ or $\alpha(A, \newbullet).$ We use $\calL_A$ for the
Lie derivative with respect to $A$. If $M$ is furnished with a Riemannian metric $\tensorg$, any bilinear tensor $\tensors: TM \otimes TM \to \mathbb{R}$ can be identified with an endomorphism of $TM$ using $\tensorg$. We use the notation $\tr \, \tensors(\times, \times)$ for the trace of this corresponding endomorphism, with the metric being implicit. If $\calH$ is a subbundle of $TM$, we will also use the notation $\tr_{\calH} \tensors(\times, \times ) := \tr \tensors(\pr_{\calH} \times, \pr_{\calH} \times)$, where $\pr_{\calH}$ is the orthogonal projection to $\calH$.

\section{Summary of Part~I} \label{sec:Summary} In this section, we
briefly recall the most important definitions and results from Part~I.

\subsection{Sub-Riemannian manifolds}
A sub-Riemannian manifold is a triple $(M, \calH, \tensorh)$ where
$M$ is a connected manifold and $\tensorh$ is a positive definite
metric tensor defined only on the subbundle $\calH$ of
$TM$. Equivalently, it can be considered as a manifold with a positive
semi-definite co-metric $\tensorh^*$ that is degenerate along a
subbundle of $T^*M$. This latter mentioned subbundle will be
$\Ann(\calH)$, the annihilator of $\calH$, that consist of all covectors 
vanishing on $\calH$. Define $\shh\colon p \mapsto
\tensorh^*(p,\newbullet) \in \calH \subseteq TM$. We will assume that
the subbundle $\calH$ is bracket-generating, i.e. its sections and
their iterated brackets span the entire tangent bundle. Then we have a
well defined metric $\metricd_{cc}$ on $M$ by taking the infimum over the
length of curves that are tangent to $\calH$.

\subsection{Two notions of sub-Laplacian}
Let $\vol$ be any smooth volume form on $M$. We then define the
sub-Laplacian relative to the volume form $\vol$ as $\srL f = \dv \shh
df,$ where the divergence is defined relative to $\vol.$ From the
definition, it is clear that $\srL$ is symmetric relative the measure
$\vol$, i.e. $\int_M f \srL g \dvol = \int_M g \srL f \dvol$ for any
$f,g \in C^\infty_c(M)$ of compact support.

We also introduced the concept of a sub-Laplacian defined relative to
a complement $\calV$ of $\calH$. Let $\tensorg$ be any Riemannian
metric satisfying $\tensorg|_{\calH} = \tensorh$ and let $\calV$ be
the orthogonal complement of $\calH$. Consider the following
connection,
\begin{align} \label{rnabla} \rnabla_{A} Z = & \pr_{\calH}
  \nabla_{\pr_{\calH} A} \pr_{\calH} Z + \pr_{\calV}
  \nabla_{\pr_{\calV} A} \pr_{\calV} Z \\ \nonumber & + \pr_{\calH}
  [\pr_{\calV} A, \pr_{\calH} Z] + \pr_{\calV} [\pr_{\calH} A,
  \pr_{\calV} Z], \end{align} where $\nabla$ is the Levi-Civita
connection of $\tensorg$. We define the sub-Laplacian of $\calV$
as
$$\roughL = \tr_{\calH} \rnabla^2_{ \times, \times}f.$$
It is simple to verify that this definition is independent of
$\tensorg|_{\calV}$, it only depends on $\tensorh$ and the splitting
$TM = \calH \oplus \calV.$

\begin{remark} \label{re:LiftedL}
  If $\calV$ is the vertical bundle of a submersion $\pi: M \to B$
  into a Riemannian manifold $(B, \widecheck{\tensorg})$ and if
  $\tensorh$ is a sub-Riemannian metric defined by pulling back
  $\widecheck{\tensorg}$ to an Ehresmann connection $\calH$ on $\pi$,
  then the sub-Laplacian $\roughL$ of $\calV$ satisfies $$\roughL (f
  \circ \pi) = (\widecheck{\Delta} f) \circ \pi,$$ where
  $\widecheck{\Delta}$ is the Laplacian of $\widecheck{\tensorg}$ and
  $f \in C^\infty(B).$
\end{remark}

\subsection{Metric-preserving complement}
A subbundle $\calV$ is integrable if $[\Gamma(\calV), \Gamma(\calV)]
\subseteq \Gamma(\calV)$. By the Frobenius Theorem, such a subbundle
gives us a foliation on $M$. We say that an integrable complement
$\calV$ of $\calH$ is metric-preserving if
$$\calL_{V} \pr_{\calH}^* \tensorh =0, \quad \text{ for any } V \in \Gamma(\calV),$$
where $\pr_{\calH}$ is the projection
corresponding to the choice of complement $\calV$. Let $\tensorg$ be any Riemannian
metric such that $\tensorg|_{\calH} = \tensorh$ and $\calH^\perp = \calV$.
If we define $\rnabla$ as in \eqref{rnabla},
then $\calV$ is metric preserving if and only if $\rnabla \tensorh^* =
0$. The foliation of $\calV$ is then called a Riemannian foliation.

\subsection{Generalized curvature-dimension inequality}
For a given smooth second order differential operator $L$ without
constant term and for any section $\tensors^*$ of $\Sym^2 TM,$ define
\begin{align*}
  \sfGamma^{\tensors^*}(f,g) &= \tensors^*(df,dg), \quad & \sfGamma^{\tensors^*}(f,f) = \sfGamma^{\tensors^*}(f), \\
  \sfGamma^{\tensors^*}_2(f,g) &= \frac{1}{2} \left(L
    \sfGamma^{\tensors^*}(f,g) - \sfGamma(Lf, g) -
    \sfGamma^{\tensors^*}(f,Lg) \right), \quad &
  \sfGamma^{\tensors^*}_2(f,f) = \sfGamma^{\tensors^*}_2(f).
\end{align*}
Assume that
$$\frac{1}{2}  \left(L(fg) - f Lg - g Lf\right) = \tensorh^*(df, dg).$$
for some positive semi-definite section $\tensorh^*$ of $\Sym^2
TM$. We say that $L$ satisfies the generalized curvature-dimension
inequality \eqref{CDstar} if there is another positive semi-definite
section $\tensorv^*$ of $\Sym^2 TM$, a positive number $0 < n \leq
\infty$ and real numbers $\rho_1, \rho_{2,0}$ and $\rho_{2,1}$ such
that for any $\ell >0$ and $f \in C^\infty(M)$,
\begin{equation}
  \tag{\sf CD*} \label{CDstar}
  \sfGamma^{\tensorh^* + \ell \tensorv^*}_2(f) \geq \frac{1}{n} (Lf)^2 + (\rho_1 - \ell^{-1}) \sfGamma^{\tensorh^*}(f) + (\rho_{2,0} + \ell \rho_{2,1}) \sfGamma^{\tensorv^*}(f).
\end{equation}

Let $(M, \calH, \tensorh)$ be a sub-Riemannian manifold with $\calH$
being a bracket-generating subbundle of $TM$. Assume that we have an
integrable metric-preserving complement $\calV$ and let $\tensorg$ be
a Riemannian metric such that $\calH$ and $\calV$ are orthogonal, with
$\tensorh = \tensorg|_{\calH}$ and $\tensorv :=
\tensorg|_{\calV}$. Let $\tensorh^*$ and $\tensorv^*$ be their
respective co-metrics. Relative to these structures, we make the
following assumptions.
\begin{enumerate}[(i)]
\item We define the curvature of $\calH$ relative to the complement
  $\calV$ as the vector valued 2-form
$$\calR(A,Z) = \pr_{\calV} [\pr_{\calH} A, \pr_{\calH} Z], \qquad A, Z \in \Gamma(TM).$$
We assume that there is a finite, minimal positive constant $\McalR < \infty$ such that $\|\calR(v, \newbullet)\|_{\tensorg^* \otimes \tensorg} \leq \McalR\|\pr_{\calH} v\|_{\tensorg}$ for any $v \in TM$. Since $\McalR$ is never
zero when $\calV \neq 0$, we can normalize $\tensorv$ by requiring
$\McalR =1.$ Let $\mcalR$ be the maximal constant satisfying $\|
\alpha(\calR(\newbullet, \newbullet))\|_{\wedge^2 \tensorh^*} \geq
\mcalR \|\alpha \|_{\tensorv^*}$ pointwise for any $\alpha \in \Gamma(T^*M)$.
Note that $\mcalR$ can only be non-zero if $\calH$ is bracket-generating
of step~2, i.e. if $\calH$ and its first order brackets span the entire
tangent bundle.
\item Define $\Ric_{\calH}(Z_1,Z_2) = \tr \big(A \mapsto
    R^{\rnabla}(\pr_{\calH} A,Z_1)Z_2 \big)$. This is a symmetric
  2-tensor, which vanishes for vectors in $\calV$. We assume that
  there is a lower bound $\rRicH$ for $\RicH$, i.e. for every $v \in
  TM$, we have $$\RicH(v,v) \geq \rRicH \|\pr_{\calH}
  v\|_{\tensorh}^2.$$
\item Write $\MII = \sup_{M} \big\| \rnabla_{\newbulletsub}^{\mathstrut}
  \tensorv^* (\newbullet,\newbullet) \big\|_{\tensorg^* \otimes \Sym^2
    \tensorg^*}$ and assume that it is finite. Define
$$(\roughL \tensorv^*)(\alpha,\alpha) = \tr_{\calH} (\rnabla_{\times,\times }^2 \tensorv^*)(\alpha,\alpha)$$
and assume that $(\roughL \tensorv^*)(\alpha,\alpha) \geq \rLv
\|\alpha\|_{\tensorv^*}^2$ pointwise for any $\alpha \in \Gamma(T^*M)$.
\item Finally, introduce $\RicHV$ as
$$\RicHV (A,Z) = \frac{1}{2} \tr\left( \tensorg(A, (\rnabla_{\times}^{\mathstrut} \calR)(\times,Z)) 
+ \tensorg(Z, (\rnabla^{\mathstrut}_{\times} \calR)(\times,A)) \right).$$
Assume then that $\RicHV(Z,Z) \geq - 2\MRicHV \|\pr_{\calV} Z\|_{\tensorv} \|\pr_{\calH} Z\|_{\tensorh}$ pointwise.
\end{enumerate}

These assumptions guarantee that the sub-Laplacian $\roughL$ of
$\calV$ satisfies \eqref{CDstar}.
\begin{theorem} \label{th:CD} Define $\sfGamma^{\tensors^*}_2$ with
  respect to $L = \roughL$. Then $\roughL$ satisfies \eqref{CDstar}
  with
  \begin{equation} \label{rhoSR}
    \left\{ \begin{aligned} 
    n & = \rank \calH, \\
    \rho_1 & = \rRicH - c^{-1}, \\
    \rho_{2,0} & =   \frac{1}{2} \mcalRvar^2 - c(\MRicHV + \MII)^2, \\
        \rho_{2,1} & = \frac{1}{2} \rLv - \MII^2, \end{aligned} \right.
  \end{equation}
  for any positive $c > 0$.
\end{theorem}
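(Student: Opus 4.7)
The plan is to derive the inequality \eqref{CDstar} for $L = \roughL$ by establishing two Bochner--Weitzenb\"ock-type identities, one for $\sfGamma^{\tensorh^*}_2(f)$ and one for $\sfGamma^{\tensorv^*}_2(f)$, assembling them with weight $\ell$, and then using Young's inequality with parameter $c$ to absorb the indefinite cross terms. Throughout, the key computational tool is the adapted connection $\rnabla$ from \eqref{rnabla}: it is metric, satisfies $\rnabla \tensorh^* = 0$ (this is the metric-preserving hypothesis on $\calV$), and expresses the sub-Laplacian as $\roughL = \tr_{\calH} \rnabla^2$. This allows essentially Riemannian-style manipulations of $\sfGamma$ and $\sfGamma_2$, with the non-integrability of $\calH$ encoded in the curvature 2-form $\calR$.

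First I would work in a local orthonormal horizontal frame $A_1, \dots, A_n$ of $\calH$ and expand $2\sfGamma^{\tensorh^*}_2(f) = \roughL \sfGamma^{\tensorh^*}(f) - 2\sfGamma^{\tensorh^*}(f, \roughL f)$. Commuting $\rnabla$ derivatives produces a curvature term in which $R^\rnabla$ appears, and then tracing the horizontal components yields the identity
$$
\sfGamma^{\tensorh^*}_2(f) = \|\rnabla^2 f|_{\calH\otimes\calH}\|^2 + \RicH(df,df) + \tfrac12 \|\calR(\grad_\calH f,\newbullet)\|^2 + E_\calR(f),
$$
where $E_\calR(f)$ is a bilinear expression in $\shh df$ and $\shv df$ arising from the brackets $[\pr_\calV A, \pr_\calH Z]$ in $\rnabla$, together with a piece coming from $\rnabla \calR$. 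The analogous computation for the vertical co-metric, using $\rnabla \tensorv^* = \II$ and the hypothesis $(\roughL \tensorv^*)(\alpha,\alpha) \ge \rLv \|\alpha\|_{\tensorv^*}^2$, gives
$$
\sfGamma^{\tensorv^*}_2(f) = \|\rnabla (\shv df)|_\calH\|^2 + \tfrac12(\roughL \tensorv^*)(df,df) + E_\II(f),
$$
with $E_\II(f)$ again a bilinear cross term, this time bounded in modulus by $\MII \sqrt{\sfGamma^{\tensorh^*}(f)\,\sfGamma^{\tensorv^*}(f)}$ after Cauchy--Schwarz, and similarly $E_\calR(f)$ bounded by $\MRicHV \sqrt{\sfGamma^{\tensorh^*}(f)\,\sfGamma^{\tensorv^*}(f)}$.

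To extract $(Lf)^2/n$ in \eqref{CDstar}, I apply the trace Cauchy--Schwarz inequality $\|\rnabla^2 f|_{\calH\otimes\calH}\|^2 \ge \frac{1}{n}(\tr_\calH \rnabla^2 f)^2 = \frac{1}{n}(\roughL f)^2$, which accounts for $n = \rank \calH$ in \eqref{rhoSR}. The pointwise bound $\|\alpha(\calR(\newbullet,\newbullet))\|_{\wedge^2\tensorh^*}^2 \ge \mcalRvar^2 \|\alpha\|_{\tensorv^*}^2$ applied to $\alpha = df$ converts $\tfrac12\|\calR(\grad_\calH f,\newbullet)\|^2$ into a contribution of the form $\tfrac12 \mcalRvar^2 \sfGamma^{\tensorv^*}(f)$, and the lower bound on $\RicH$ produces $\rRicH \sfGamma^{\tensorh^*}(f)$. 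Adding $\ell$ times the vertical identity then yields the desired left-hand side of \eqref{CDstar} up to the combined cross term $|E_\calR(f) + \ell E_\II(f)| \le (\MRicHV + \ell\MII)\sqrt{\sfGamma^{\tensorh^*}(f)\,\sfGamma^{\tensorv^*}(f)}$. Applying $2ab \le c^{-1} a^2 + c\, b^2$ with $a = \sqrt{\sfGamma^{\tensorh^*}(f)}$ and $b = (\MRicHV + \MII)\sqrt{\sfGamma^{\tensorv^*}(f)}$, and using $\ell \MII \le \MII + \ell \MII^2$ absorbed into the $\rho_{2,1}$ coefficient via $\tfrac12\rLv - \MII^2$, produces the three constants $\rho_1 - \ell^{-1}$, $\rho_{2,0}$, $\ell \rho_{2,1}$ exactly as in \eqref{rhoSR}.

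The main obstacle is the precise identification of the cross terms $E_\calR(f)$ and $E_\II(f)$ in a form to which the hypothesized bounds on $\RicHV$ and $\II$ apply directly: this requires carefully tracking all terms produced by commuting $\rnabla$ with itself and with $\pr_\calH$, and in particular showing that the $\RicHV$ tensor is exactly what appears after symmetrisation of the $\rnabla \calR$ contribution. Once this is done, the rest is a bookkeeping exercise in Young's inequality.
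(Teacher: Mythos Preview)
This theorem is stated here only as part of the summary of Part~I; the present paper does not give a proof and simply refers the reader back to Part~I (see the sentence immediately following \eqref{rhoSR}). So there is no proof in this document to compare against directly. Your overall architecture---Bochner--Weitzenb\"ock identities for $\sfGamma^{\tensorh^*}_2$ and $\sfGamma^{\tensorv^*}_2$ computed via the adapted connection $\rnabla$, combined with weight $\ell$ and cleaned up by Young's inequality with parameter $c$---is indeed the method of Part~I.

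That said, there is a genuine confusion in your handling of the $\mcalR$-term. You write $\tfrac12\|\calR(\grad_\calH f,\newbullet)\|^2$ as a separate summand in the Bochner formula for $\sfGamma^{\tensorh^*}_2(f)$ and then apply the lower bound $\|\alpha(\calR(\newbullet,\newbullet))\|_{\wedge^2\tensorh^*}^2 \ge \mcalRvar^2 \|\alpha\|_{\tensorv^*}^2$ to it. These are different objects: $\|\calR(\shh df,\newbullet)\|_{\tensorg^*\otimes\tensorg}$ depends only on the \emph{horizontal} gradient and is controlled by the \emph{upper} bound $\McalR$, whereas the quantity governed by $\mcalR$ is $\|df(\calR(\newbullet,\newbullet))\|_{\wedge^2\tensorh^*}$, which depends on the \emph{vertical} part of $df$. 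The correct mechanism is that $\rnabla^2 f|_{\calH\otimes\calH}$ is not symmetric (its antisymmetric part equals $\tfrac12\,df\circ\calR$ through the torsion $T^{\rnabla}|_{\calH\times\calH} = -\calR$), and the orthogonal splitting of $\|\rnabla^2 f|_{\calH\otimes\calH}\|^2$ into symmetric and antisymmetric parts simultaneously yields $\tfrac{1}{n}(\roughL f)^2$ from the symmetric trace and $\tfrac12\|df\circ\calR\|_{\wedge^2\tensorh^*}^2 \ge \tfrac12\mcalRvar^2\,\sfGamma^{\tensorv^*}(f)$ from the antisymmetric part. Listing the full Hessian-squared and the $\calR$-term separately and then applying the trace inequality to the full Hessian, as you do, double-counts.

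Two smaller gaps: you never explain where the $-\ell^{-1}$ in $\rho_1 - \ell^{-1}$ comes from---it is not produced by the $c$-parameter Young step but by a separate absorption (with parameter $\ell$) of a cross term involving the $\McalR$-controlled mixed block; and your line ``using $\ell\MII \le \MII + \ell\MII^2$'' is not a valid inequality in general and cannot be the source of $\rho_{2,1} = \tfrac12\rLv - \MII^2$. The $-\MII^2$ penalty in $\rho_{2,1}$ arises from completing a square inside $\ell\,\sfGamma^{\tensorv^*}_2(f)$, not from a linear estimate on $\ell\MII$. The skeleton of your argument is right, but the bookkeeping tying specific cross terms to the four constants needs to be redone carefully.
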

See Part~I, Section~3.2 for a geometric interpretation of these constants.

\subsection{The case when $\rnabla$ preserves the metric} Assume that we can
find a metric tensor $\tensorv$ on $\calV$ satisfying $\rnabla
\tensorv^* = 0$. Then $\roughL = \srL$, where $\srL$ is defined
relative to the volume form of the Riemannian metric $\tensorg$
defined by $\tensorg^* = \tensorh^* + \tensorv^*.$ Hence, $L = \srL $
is symmetric with respect to this volume form and satisfies the
inequality
\begin{equation} \tag{\sf CD} \label{CD} \sfGamma^{\tensorh^* + \ell
    \tensorv^*}_2(f) \geq \frac{1}{n} (L f)^2 + \left(\rho_1 -
    \ell^{-1} \right) \sfGamma^{\tensorh^*}(f) + \rho_{2}
  \sfGamma^{\tensorv^*}(f) 
\end{equation}
with
\begin{equation} \label{rhoSR2} \left\{ \begin{aligned} \rho_1 & =
      {\rRicH} - c^{-1}, \\ \rho_{2} & = \frac{1}{2}
      \mcalRvar^2 - c\MRicHV^2, \end{aligned} \right.
\end{equation}
for any positive $c > 0$. We shall also need the following result.
\begin{proposition} \label{prop:DoubleGamma} For any $f \in
  C^\infty(M)$, and any $c >0$ and $\ell >0$,
  \begin{align*}
    \frac{1}{4} \sfGamma^{\tensorh^*}(\sfGamma^{\tensorh^*}(f)) &\leq  \sfGamma^{\tensorh^*}(f) \left(\sfGamma^{\tensorh^* + \ell \tensorv^*}_2(f) - (\varrho_1 -\ell^{-1}) \sfGamma^{\tensorh^*}(f) - \varrho_2 \sfGamma^{\tensorv^*}(f) \right), \\
    \frac{1}{4} \sfGamma^{\tensorh^*}(\sfGamma^{\tensorv^*}(f)) &\leq
    \sfGamma^{\tensorv^*}(f) \sfGamma^{\tensorv^*}_2(f),
  \end{align*}
  where $\varrho_1 = \rRicH - c^{-1}$ and $\varrho_2 = - c \MRicHV^2.$
\end{proposition}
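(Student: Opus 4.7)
The strategy is to combine a pointwise Cauchy--Schwarz estimate with the Bochner-type identities from Part~I that underlie the proof of Theorem~\ref{th:CD}. Throughout we use the present assumptions $\rnabla \tensorh^* = 0$ and $\rnabla \tensorv^* = 0$, and work in a local $\tensorh$-orthonormal frame $A_1,\dots,A_n$ of $\calH$.

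For the first inequality, $\rnabla \tensorh^* = 0$ gives, for any vector field $X$,
$$d\sfGamma^{\tensorh^*}(f)(X) = 2\,\tensorh^*(\rnabla_X df, df) = 2\,(\rnabla_X df)(\shh df),$$
so that
$$\sfGamma^{\tensorh^*}(\sfGamma^{\tensorh^*}(f)) = 4\sum_{i=1}^n \bigl((\rnabla_{A_i} df)(\shh df)\bigr)^2 \leq 4\,\sfGamma^{\tensorh^*}(f)\sum_{i=1}^n \bigl\|(\rnabla_{A_i} df)|_{\calH}\bigr\|_{\tensorh^*}^2$$
by Cauchy--Schwarz applied fiberwise in $\calH$, using $\tensorh(\shh df,\shh df) = \sfGamma^{\tensorh^*}(f)$. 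The last sum is the squared horizontal Hessian $\|(\rnabla df)|_{\calH\times\calH}\|^2$ that appears as the leading positive term in the Bochner-type expansion of $\sfGamma_2^{\tensorh^* + \ell\tensorv^*}(f)$ derived in Part~I. Reading off that expansion \emph{before} the two non-negative estimates used to produce \eqref{CD}---namely $(\tr_{\calH}\rnabla df)^2 \leq n\,\|(\rnabla df)|_{\calH \times \calH}\|^2$, which yields the term $\tfrac{1}{n}(Lf)^2$, and the pointwise $\mcalRvar$-bound on the curvature form $\calR$, which yields $\tfrac{1}{2}\mcalRvar^2 \sfGamma^{\tensorv^*}(f)$---one obtains
$$\|(\rnabla df)|_{\calH \times \calH}\|^2 \leq \sfGamma_2^{\tensorh^* + \ell\tensorv^*}(f) - (\varrho_1 - \ell^{-1})\sfGamma^{\tensorh^*}(f) - \varrho_2\,\sfGamma^{\tensorv^*}(f),$$
which combined with the Cauchy--Schwarz bound is the first inequality.

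The second inequality is entirely analogous, now exploiting $\rnabla \tensorv^* = 0$: one has $d\sfGamma^{\tensorv^*}(f)(X) = 2(\rnabla_X df)(\shv df)$, and Cauchy--Schwarz in $\calV$ gives
$$\sfGamma^{\tensorh^*}(\sfGamma^{\tensorv^*}(f)) \leq 4\,\sfGamma^{\tensorv^*}(f)\sum_{i=1}^n \bigl\|(\rnabla_{A_i} df)|_{\calV}\bigr\|_{\tensorv^*}^2.$$
In the Bochner-type expansion of $\sfGamma_2^{\tensorv^*}(f)$, the horizontal-vertical Hessian norm $\sum_i \|(\rnabla_{A_i} df)|_\calV\|_{\tensorv^*}^2$ appears as the leading positive term; under $\rnabla \tensorv^* = 0$ the contributions from $\rnabla^2 \tensorv^*$ vanish and the remaining cross-curvature terms are either absent or non-negative, yielding $\sum_i \|(\rnabla_{A_i} df)|_{\calV}\|_{\tensorv^*}^2 \leq \sfGamma_2^{\tensorv^*}(f)$. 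The main obstacle is bookkeeping: one must retrace the derivation of \eqref{CD} from Part~I only just far enough to isolate these squared Hessian norms, applying Young's inequality with parameter $c$ to the $\RicHV$ cross-term so that precisely the constants $\varrho_1 = \rRicH - c^{-1}$ and $\varrho_2 = -c\MRicHV^2$ emerge, and check that no residual term proportional to $\sfGamma^{\tensorh^*}(f)$ survives in the second estimate.
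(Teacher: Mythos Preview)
The paper (Part~II) does not actually prove Proposition~\ref{prop:DoubleGamma}; it is listed in Section~\ref{sec:Summary} as a result carried over from Part~I, so there is no in-text proof to compare against directly. That said, your outline follows precisely the standard route one expects Part~I to take: a pointwise Cauchy--Schwarz bound reducing $\sfGamma^{\tensorh^*}(\sfGamma^{\tensorh^*}(f))$ (resp.\ $\sfGamma^{\tensorh^*}(\sfGamma^{\tensorv^*}(f))$) to the relevant squared Hessian block, followed by the Bochner-type identity for $\sfGamma_2^{\tensorh^*+\ell\tensorv^*}$ (resp.\ $\sfGamma_2^{\tensorv^*}$) read off \emph{before} one discards the Hessian norm via $(\tr_\calH \rnabla^2 f)^2 \leq n\|(\rnabla df)|_{\calH\times\calH}\|^2$ and the $\mcalR$-bound. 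This is correct and is exactly how such ``$\sfGamma(\sfGamma(f))\leq 4\sfGamma(f)\sfGamma_2(f)$''-type estimates are obtained in the Bakry--\'Emery framework.

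The only place your write-up is thin is the second inequality: you assert that under $\rnabla\tensorv^*=0$ ``the remaining cross-curvature terms are either absent or non-negative,'' but you do not say which terms these are or why they have the right sign. In the totally geodesic setting the Bochner expansion of $\sfGamma_2^{\tensorv^*}(f)$ does reduce to the mixed Hessian norm $\sum_i\|(\rnabla_{A_i}df)|_\calV\|_{\tensorv^*}^2$ plus a curvature contribution that vanishes by Lemma~\ref{lemma:fromPartI}(a) (i.e.\ $\tensorg(R^{\rnabla}(A,V)Z,A)=0$ for $A\in\Gamma(\calH)$, $V\in\Gamma(\calV)$), but you should name that mechanism rather than leave it as an unspecified sign check. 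With that gap filled, your argument is complete and matches what Part~I does.
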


\subsection{Spectral Gap}
Let $(M, \calH, \tensorh)$ be a compact sub-Riemannian manifold where
$\calH$ is bracket-gen\-er\-at\-ing. Let $L$ be a smooth second order
operator without constant term satisfying $\tensorq_L = \tensorh^*$
and assume also that $L$ is symmetric with respect to some volume form
$\vol$ on $M$.  Assume that $L$ satisfies \eqref{CDstar} with
$\rho_{2,0} > 0$. Let $\lambda$ be any nonzero eigenvalue of $L$. Then
$$\frac{n \rho_{2,0}}{n + \rho_{2,0}(n-1)} \left(\rho_1 - \frac{ k_2}{\rho_{2,0}} \right) \leq - \lambda, \qquad k_2 = \max \{0, -\rho_{2,1} \}.$$

\section{Results under conditions of a uniformly bounded
  gradient} \label{sec:GradientBound}
\subsection{Diffusions of second order
  operators} \label{sec:GeneralOperator} Let $T^2M$ denote the bundle
of second order tangent vectors. Let $L$ be a section of $T^2M$,
i.e. a smooth second order differential operator on $L$ without
constant term. Consider the short exact sequence
$$0 \to TM \stackrel{\inc}{\longrightarrow} T^2M \stackrel{\tensorq}{\longrightarrow} \Sym^2 T M \to 0$$
where $\tensorq_L = \tensorq(L)$ is defined by
\begin{equation} \label{qform} \tensorq_L(df,dg) = \frac{1}{2}
  \left(L(fg) - f Lg - g Lf \right), \quad f,g \in
  C^\infty(M).\end{equation} Assume that $\tensorq_L$ is
positive semi-definite. Then for any point $x \in M$ and relative to some
filtered probability space $(\Omega, \scrF_{\!\newbulletsub}^{\mathstrut}, \prob)$, we have a
$\frac{1}{2} L$-diffusion $X = X(x)$ defined up to some
explosion time $\tau = \tau(x)$, see \cite[Theorems 1.3.4 and
1.3.6]{Hsu02}. In other words, there exist an $\scrF_{\!\newbulletsub}^{\mathstrut}$-adapted
$M$-valued semimartingale $X(x)$ satisfying $X_0(x) = x$ and
such that for any $f \in C^\infty(M)$, 
$$d(f(X_t) ) -\frac{1}{2} L f(X_t) \, dt$$ is 
the differential of a local martingale up to $\tau(x)$. The diffusion $X(x)$ is defined on the
stochastic interval 
$[0,\tau(x))$, with $\tau(x)$ being an explosion time in the sense that the event
$\{\tau(x) < \infty\}$ is almost surely contained in $\{\lim_{t \uparrow \tau} X_t(x)
= \infty\}$. 
For a construction of $X_t(x)$ in the case of $L = \roughL$, see Part~I, Section~2.5.

Let $P_t $ be the corresponding semigroup $P_t f(x) =
\expect[ 1_{t \leq \tau} f( X_t(x))]$ for bounded measurable
functions $f$. Note that in general $P_t 1\leq1$ with equality if and
only if $\tau(x) = \infty$ a.s. Also note that for any compactly
supported $f \in C_c^\infty(M)$, we have $\partial_t P_t f =
\frac{1}{2} L P_t f$. If $\tau = \infty$ a.s., then $u_t = P_t f$
is the unique solution to $\partial_t u_t =\frac{1}{2} L u_t$ with
initial condition $u_0 = f$, where $(t,x) \mapsto u_t(x)$ is a smooth
function on ${\real_+} \times M$.

Since $\tensorq_L$ is positive semi-definite, we can write $L$
(non-uniquely) as
$$L = \sum_{i=1}^k Z_i^2 + Z_0,$$
where $k$ is an integer and $Z_0, Z_1, \dots, Z_k$ are vector fields,
not necessarily linearly independent at every point. If we assume that
these vector fields and their brackets span the entire tangent bundle,
then $L$ is a hypoelliptic operator \cite{Hor67}. Hence, it has a
smooth heat kernel with respect to any volume form on $M$. By \cite{StVa72}, we also
have $P_t f > 0$ for any nonnegative function $f \in C^\infty(M)$, not identically zero,
see also \cite[Introduction]{Kun78}. We will \textit{only} consider such second order operators
in this paper.

Write $\tensorh^* = \tensorq_L$. Assume that $L$ satisfies \eqref{CDstar} for some $\tensorv^*$. We
want to use this inequality to obtain statements of $P_t$. However, we
are going to need the following condition to hold to make such
statements.

\subsection{Boundedness of the gradient under the action of the heat semigroup}
The most important property which we are going to need for all of our
results, is the following condition. 
Let $C_b^\infty(M)$ be the collection of all bounded smooth functions.
\begin{equation}
  \tag{\sf A} \label{CondA} \begin{array}{c} 
   \text{We have $P_t 1 = 1$ and for any $f \in C^\infty_b(M)$ with $\sfGamma^{\tensorh^* + \tensorv^*}(f) \in C_b^\infty(M)$} \\
   \text{and any $T > 0$, it holds that} 
    \sup_{t \in [0,T]} \|\sfGamma^{\tensorh^* + \tensorv^*}(P_tf)\|_{L^\infty} < \infty. \end{array}
\end{equation}
To understand condition \eqref{CondA} better, let us first discuss the
special case when $\tensorh = \tensorg$ is a Riemannian metric,
$\tensorv^* = 0$ and $L = \Delta$ is the Laplacian of $\tensorg$. Then
\eqref{CDstar} holds if and only if the Ricci curvature is bounded
from below, see e.g. \cite{Wan04}. If we in addition know that
$\tensorg$ is complete, then \eqref{CondA} is satisfied. However, even if we know that
$P_t 1 =1$ and that the manifold is flat, condition \eqref{CondA} still may not hold if
$\tensorg$ is an incomplete metric. See~\cite{Tha98}
for a counter-example.

We list some cases where we are ensured that \eqref{CondA} is
satisfied. We expect there to be more cases where this condition holds.

\subsubsection{Fiber bundles with compact fibers} \label{sec:FiberCompact} 
Let $L$ be a second order operator
on a manifold $M$ with $\tensorq_L = \tensorh^*$. Let $\tensorv^*$ be
any other co-metric such that $\tensorh^* + \tensorv^*$ is positive
definite. The following observation was given in \cite[Lemma~2.1,
Proof (i)]{Wan13}.
\begin{lemma} \label{WangF} Assume that there exists a function $F \in
  C^\infty(M)$ and a constant $C >0$ satisfying
  \begin{enumerate}[$\ \bullet$]
  \item $\{x :F(x) \leq s\} $ is compact for any $s > 0$,
  \item $LF \leq C F$,
  \item $\sfGamma^{\tensorh^* + \tensorv^*}(F) \leq C F^2.$
  \end{enumerate}
  Then \eqref{CondA} holds for the semigroup $P_t$ of the diffusion of $ L$.
\end{lemma}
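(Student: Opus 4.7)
The plan is to handle the two assertions in \eqref{CondA} in sequence: non-explosion $P_t 1 = 1$ follows from a textbook Lyapunov argument, while the uniform gradient bound is the genuine technical point. For non-explosion I would fix $x \in M$ and set $\tau_n := \inf\{t : F(X_t(x)) \geq n\}$, which exhausts the explosion time $\tau$ since each $\{F \leq n\}$ is compact. Applying It\^o's formula to $e^{-Ct/2} F(X_{t \wedge \tau_n})$ produces a drift $\tfrac{1}{2} e^{-Ct/2}(LF - CF)(X_{t \wedge \tau_n})\,dt \leq 0$ by the second hypothesis, so the stopped process is a true supermartingale. Continuity of $X$ forces $F(X_{\tau_n}) = n$ on $\{\tau_n < \infty\}$, and taking expectations yields $\prob(\tau_n \leq T) \leq e^{CT/2} F(x)/n \to 0$, whence $\tau = \infty$ almost surely.

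For the uniform gradient bound, fix $T > 0$, take $f \in C_b^\infty(M)$ with $G(f) := \sfGamma^{\tensorh^* + \tensorv^*}(f) \leq K$, and put $u_t := P_t f$. The plan is to apply It\^o's formula to the localized process $N^n_s := G(u_{T-s})(X_{s \wedge \tau_n})$ on $s \in [0, T)$. Using the identity $\partial_t G(u_t) = \sfGamma^{\tensorh^* + \tensorv^*}(u_t, L u_t)$, the drift computes to $\sfGamma^{\tensorh^* + \tensorv^*}_2(u_{T-s})(X_{s \wedge \tau_n})\,ds$. Invoking \eqref{CDstar} with $\ell = 1$ and discarding the nonnegative $(Lu)^2/n$ term yields a pointwise lower bound $\sfGamma^{\tensorh^* + \tensorv^*}_2(u) \geq -\lambda\, G(u)$ for some $\lambda \geq 0$, so after taking expectations a Gronwall step applied to $h(s) := \expect[G(u_{T-s})(X_s)]$ gives $G(u_T)(x) = h(0) \leq e^{\lambda T} h(T) = e^{\lambda T}\, \expect[G(f)(X_T)] \leq e^{\lambda T} K$, uniformly in $x \in M$ and $T \in [0,T_0]$.

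The main obstacle is justifying these stochastic manipulations and the passage $n \to \infty$: \emph{a priori} $N^n$ is only a local semimartingale, its local martingale part must be shown to be a true martingale, and one must pass expectations through the limit $\tau_n \nearrow \infty$. This is where the third hypothesis becomes essential. Combining $LF \leq CF$ with $\sfGamma^{\tensorh^* + \tensorv^*}(F) \leq CF^2$ gives
\[L(F^2) = 2F\,LF + 2\tensorq_L(dF, dF) \leq 2CF^2 + 2\sfGamma^{\tensorh^* + \tensorv^*}(F) \leq 4CF^2,\]
and iterating produces $L F^k \leq C_k F^k$ for every $k$, hence $\expect[F(X_t)^k] \leq e^{C_k t} F(x)^k$ on $[0,T_0]$. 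These moment bounds, together with hypoellipticity and the compactness of each sublevel set (on which all derivatives of $u_t$ are locally bounded), supply the dominated-convergence and uniform-integrability estimates needed to legitimize the It\^o computation, pass $n \to \infty$, and conclude the stated uniform bound.
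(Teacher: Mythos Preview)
The paper does not give its own proof of this lemma; it simply quotes \cite[Lemma~2.1, Proof~(i)]{Wan13}. Your non-explosion argument via the Lyapunov function $F$ is standard and correct, and is the same as what one finds there.

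There is, however, a genuine gap in your treatment of the gradient bound. After localising the submartingale $e^{\lambda s}G(u_{T-s})(X_s)$ by the exit times $\tau_n$, the stopped submartingale inequality yields
\[
G(P_Tf)(x)\;\le\; e^{\lambda T}\,\expect\!\big[G(f)(X_T)\,1_{\{\tau_n>T\}}\big]\;+\;e^{\lambda T}\,\expect\!\big[G(u_{T-\tau_n})(X_{\tau_n})\,1_{\{\tau_n\le T\}}\big].
\]
To let $n\to\infty$ you must show the second term vanishes. You invoke the moment bounds $\expect[F(X_t)^k]\le e^{C_kt}F(x)^k$, but these only control $\prob(\tau_n\le T)\lesssim n^{-k}$; they say nothing about the size of $G(u_{T-\tau_n})(X_{\tau_n})$ on that event. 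That quantity is the gradient of $P_sf$ at points where $F=n$, and bounding it is precisely the content of \eqref{CondA} you are trying to prove. Nothing in your sketch links $G(P_sf)$ to powers of $F$, so the ``dominated-convergence and uniform-integrability estimates'' you appeal to are simply not available. Worse, from $L(F^k)\le k^2CF^k$ one gets $C_k\sim k^2$, so $\prob(\tau_n\le T)$ only decays polynomially in $n$ and cannot absorb an unknown growth rate of $\sup_{t\in[0,T],\,F\le n}G(u_t)$.

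This boundary term is exactly the delicate point of the lemma. The third hypothesis $\sfGamma^{\tensorh^*+\tensorv^*}(F)\le CF^2$ is the key to handling it: it lets one build compactly supported cut-offs $\phi_n$ with $\sfGamma^{\tensorh^*+\tensorv^*}(\phi_n)\to 0$ uniformly (take $\phi_n=h(\log F/\log n)$ for a fixed cut-off $h$), and these are used to run a weighted version of the computation in which the boundary contribution is killed rather than estimated. Your outline uses the third hypothesis only to manufacture higher moments of $F$, which is not what it is for and not enough to close the argument.
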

Let $(B, \widecheck{\tensorg})$ be a complete $n$-dimensional
Riemannian manifold with distance $\metricd_{\widecheck{\tensorg}}$
and Ricci bound from below by $\rho \leq 0$. For a given point $b_0
\in B$, define $r = \metricd_{\widecheck{\tensorg}}(b_0,
\newbullet)$. Then the function $F= \sqrt{1+ r^2}$ (or rather an
appropriately smooth approximation) satisfies the above conditions
relative to $\widecheck{\Delta}$. This follows from the fact that
(outside the cut-locus) $\sfGamma^{\widecheck{\tensorg}}(r) = 1$ and from the Laplacian
comparison theorem
$$\widecheck{\Delta} r \leq (n-1) \left(\frac{1}{r} + \sqrt{- \rho} \right).$$
Now let $\pi: M \to B$ be a fiber bundle with a compact fiber over
this Riemannian manifold $B$. Choosing an Ehresmann connection $\calH$
on $\pi$, we define a sub-Riemannian manifold $(M, \calH, \tensorh)$
by $\tensorh = \pi^* \widecheck{\tensorg}|_{\calH}$. Then $F \circ \pi$ clearly
satisfies Lemma~\ref{WangF} with respect to $L = \roughL + Z$ where
$\roughL$ is the sub-Laplacian of $\calV = \ker \pi_*$ and $Z$ is any
vector field with values in $\calV$. It follows that \eqref{CondA}
holds in this case.

\begin{remark} Let $\pi\colon M \to B$ be a surjective submersion into a Riemannian manifolds $(B, \widecheck{\tensorg})$. Let $\calH$ be an Ehresmann connection on $\pi$ and define a sub-Riemannian
structure~$(\calH, \tensorh)$ by $\tensorh = \pi^* \widecheck{\tensorg}|_{\calH}$.
In this case, $\pi$ is a distance-decreasing map from the metric space $(M, \metricd_{cc})$
to $(M, \metricd_{\widecheck{\tensorg}})$, where the metrics $\metricd_{cc}$ and
$\metricd_{\widecheck{\tensorg}}$ are defined relative to $(\calH, \tensorh)$ and
$\widecheck{\tensorg}$, respectively. This follows from the observation that for any horizontal curve $\gamma$
in~$M$ from the point $x$ to the point $y$, the curve $\pi \circ \gamma$ will be
a curve of equal length in $B$ connecting $\pi(x)$ with $\pi(y)$, hence $\metricd_{cc}(x, y) \geq \metricd_{\widecheck{\tensorg}}(\pi(x), \pi(y))$. In particular, if $\metricd_{cc}$ is complete, so is $\metricd_{\widecheck{\tensorg}}$, and the converse also hold if $\pi$ is a fiber bundle with compact fibers.

Furthermore, if $\srL$ is the sub-Laplacian of $\calV = \ker \pi_*$ satisfying \eqref{CDstar}, then the Ricci curvature
of $B$ is bounded from below, since, by Remark~\ref{re:LiftedL}, if we insert a function $f\circ \pi$, $f \in C^\infty(B)$ into \eqref{CDstar}, we obtain the usual curvature-dimension inequality on $B$,
$$\sfGamma^{\widecheck{\tensorg}}_2(f) \geq \frac{1}{n} (\widecheck{\Delta})^2 + \rho_1 \sfGamma^{\widecheck{\tensorg}}(f).$$
A result in \cite[Prop~6.2]{Bak94} tells us that $\rho_1$ must be a lower Ricci bound for $B$.
\end{remark}

We summarize all the above comments in the following proposition.
\begin{proposition}
Let $(M, \calH, \tensorh)$ be a complete sub-Riemannian manifold with an integrable
metric preserving complement $\calV$. Let $\calF$ be the foliation induced by $\calV$ and
let $\roughL$ be the sub-Laplacian of $\calV$. Assume that the leafs of $\calF$ are compact
and that $M/\calF$ gives us a well defined smooth manifold. 
Finally assume that $L =\roughL + Z$ satisfies \eqref{CDstar}
with respect to some $\tensorv^*$ on $\calV$.
Then \eqref{CondA} also hold for the corresponding semigroup $P_t$ of $L$.
\end{proposition}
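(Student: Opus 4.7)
The plan is to reduce to the fiber-bundle situation already treated in Lemma~\ref{WangF} by pushing down to $B := M/\calF$ and lifting an exhaustion function from there.

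First I would use the hypothesis to set up the submersion. Since the leaves of $\calF$ are compact and $M/\calF$ is a smooth manifold, the quotient map $\pi \colon M \to B$ is a fiber bundle with compact fibers, and the metric-preserving property of $\calV$ guarantees a unique Riemannian metric $\widecheck{\tensorg}$ on $B$ satisfying $\pi^*\widecheck{\tensorg}|_{\calH} = \tensorh$. As noted in the remark preceding the proposition, completeness of $\metricd_{cc}$ on $M$ together with the compactness of the fibers upgrades to completeness of $\metricd_{\widecheck{\tensorg}}$ on $B$.

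Second, I would push the curvature-dimension inequality down to $B$. For any $f \in C^\infty(B)$, Remark~\ref{re:LiftedL} gives $\roughL(f \circ \pi) = (\widecheck{\Delta} f)\circ \pi$, while $Z \in \Gamma(\calV) = \Gamma(\ker \pi_*)$ yields $Z(f\circ \pi) = 0$; also $\sfGamma^{\tensorh^*}(f\circ \pi) = \sfGamma^{\widecheck{\tensorg}^*}(f)\circ \pi$ and $\sfGamma^{\tensorv^*}(f\circ \pi) = 0$ because $d(f\circ\pi)$ annihilates $\calV$. Substituting $f\circ\pi$ into \eqref{CDstar} therefore produces the classical Bakry--\'Emery inequality on $B$, so \cite[Prop.~6.2]{Bak94} forces a lower Ricci bound $\widecheck{\Ric} \geq \rho$ on $B$ for some $\rho \in \real$.

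Third, I would construct the exhaustion function on the base exactly as in the discussion preceding the proposition: pick $b_0 \in B$, let $r = \metricd_{\widecheck{\tensorg}}(b_0, \newbullet)$, and set $F = \sqrt{1 + r^2}$, interpreted in the usual smooth-approximation/distributional sense through the cut locus. Using $\phi(r) = \sqrt{1+r^2}$ with $\phi'(r) = r/\sqrt{1+r^2}$ and $\phi''(r) = (1+r^2)^{-3/2}$, together with $\|dr\|_{\widecheck{\tensorg}^*} = 1$ a.e. and the Laplacian comparison $\widecheck{\Delta} r \leq (n-1)(1/r + \sqrt{\max\{0,-\rho\}})$, one obtains $\widecheck{\Delta} F \leq C F$ and $\|dF\|_{\widecheck{\tensorg}^*}^2 \leq 1 \leq F^2$, while Hopf--Rinow ensures the sublevel sets $\{F \leq s\}$ are compact.

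Finally I would lift to $\tilde F := F \circ \pi$ and check the three hypotheses of Lemma~\ref{WangF} for $L$: the sublevel set $\{\tilde F \leq s\} = \pi^{-1}(\{F\leq s\})$ is compact because $\pi$ is a fiber bundle with compact fibers; $L\tilde F = (\widecheck{\Delta} F)\circ \pi \leq C\tilde F$ by the second step; and $\sfGamma^{\tensorh^*+\tensorv^*}(\tilde F) = \|dF\|_{\widecheck{\tensorg}^*}^2\circ \pi \leq C \tilde F^2$ since $\sfGamma^{\tensorv^*}(\tilde F) = 0$. Lemma~\ref{WangF} then yields \eqref{CondA}. The only genuinely delicate point is the standard cut-locus regularization of $F$ on $B$; once that is handled (by Calabi's trick or smooth mollification), the rest is pure bookkeeping made possible by the fiber-bundle structure and Remark~\ref{re:LiftedL}.
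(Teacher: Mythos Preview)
Your proposal is correct and mirrors the paper's argument exactly: the paper presents this proposition as a summary of the preceding discussion, which consists precisely of the steps you list --- pass to the quotient $B=M/\calF$ as a fiber bundle with compact fibers, use the remark to transfer completeness and the Ricci lower bound to $(B,\widecheck{\tensorg})$, build $F=\sqrt{1+r^2}$ on $B$, and lift $F\circ\pi$ to verify the hypotheses of Lemma~\ref{WangF}. One small note: completeness of $\metricd_{\widecheck{\tensorg}}$ follows from completeness of $\metricd_{cc}$ alone (the compact-fiber assumption is only needed for the converse), but this does not affect your argument.
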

Notice that in this case, unlike what we will discuss next, there is no requirement on the number of brackets needed of vector fields in $\calH$ in order to span the entire tangent bundle.

\subsubsection{A sub-Laplacian on a totally geodesic Riemannian
  foliation} \label{sec:sLCompleteR} Assume that $(M, \tensorg)$ is a
complete Riemannian manifold with a foliation $\calF$ given by an
integrable subbundle $\calV$. Let $\calH$ be the orthogonal complement
of $\calV$ and assume that $\calH$ is bracket-generating. Write $\tensorh = \tensorg|_{\calH}$. Define
$\rnabla$ relative to the splitting $TM= \calH \oplus \calV$ as
in~\eqref{rnabla}. Assume that $\rnabla \tensorg = 0$, which is equivalent to stating
that $\calV$ is a metric preserving complement of $(M, \calH, \tensorh)$
and that $\calF$ is a totally geodesic foliation. Note that since~$\tensorg$
is complete, so is $(M, \metricd_{cc})$, where
$\metricd_{cc}$ is defined relative to the sub-Riemannian metric~$\tensorh$.
For such sub-Riemannian manifolds, the we can deduce the following.

\begin{theorem} \label{th:CondARiemann}
Let $\srL$ be the sub-Laplacian of the volume form of~$\tensorg$ or equivalently~$\calV.$
Assume that $\srL$ satisfies
  the assumptions of Theorem~\textup{\ref{th:CD}} with $\mcalR
  >0$. Let $k = \max\{ - \rRicH, \MRicHV^2\} \geq 0$. Then, for any compactly supported $f
  \in C_c^\infty(M)$, $\ell > 0$ and $t \geq 0$,
  \begin{align*}\sqrt{\sfGamma^{\tensorv^*}(P_tf)} &\leq P_t \sqrt{\sfGamma^{\tensorv^*}(f)}\,,\\
    \sqrt{\sfGamma^{\tensorh^*}(P_tf)} + \sfGamma^{\tensorv^*}(P_t f)
    &\leq e^{k t/2} P_t\left( \sqrt{\sfGamma^{\tensorh^*}(f)} +
      \sfGamma^{\tensorv^*}(f) \right) + \frac{2}{k} ( e^{k t/2} - 1),
  \end{align*}
  where we interpret $\frac{2}{k} ( e^{k t/2} - 1)$ as $t$ when $k
  =0$. As a consequence \eqref{CondA} holds.
  
  In particular, any $\frac{1}{2} \srL$-diffusion $ X(x)$ with $X_0(x) =x \in M$ has infinite lifetime.
\end{theorem}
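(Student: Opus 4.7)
The plan is to establish the two gradient inequalities by Bakry–Émery semigroup interpolation and then to extract \eqref{CondA} (together with $P_t 1 = 1$) from them. Under the hypothesis $\rnabla \tensorg = 0$, the operator $\roughL$ coincides with $\srL$ and the inequality \eqref{CD} is available; dividing \eqref{CD} by $\ell$ and letting $\ell \to \infty$ immediately gives the auxiliary bound $\sfGamma^{\tensorv^*}_2(f) \geq 0$, which will be used repeatedly. Throughout, I add a small parameter $\varepsilon > 0$ inside each square root to regularize the singularity where the relevant $\sfGamma$ vanishes, and send $\varepsilon \to 0$ only at the end.

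For the first estimate, consider the interpolation $\Phi(s) = P_s\sqrt{\sfGamma^{\tensorv^*}(P_{t-s}f) + \varepsilon}$ for $s \in [0,t]$. Using the chain rule $L\sqrt{g} = Lg/(2\sqrt{g}) - \sfGamma^{\tensorh^*}(g)/(4g^{3/2})$ together with the semigroup identity $\partial_s \sfGamma^{\tensorv^*}(P_{t-s}f) = -\sfGamma^{\tensorv^*}(P_{t-s}f, L P_{t-s}f)$, a direct computation produces
\[
\Phi'(s) \;=\; \tfrac{1}{2} P_s\!\left(\frac{\sfGamma^{\tensorv^*}_2(P_{t-s}f)}{\sqrt{\sfGamma^{\tensorv^*}(P_{t-s}f)+\varepsilon}} \;-\; \frac{\sfGamma^{\tensorh^*}\!\bigl(\sfGamma^{\tensorv^*}(P_{t-s}f)\bigr)}{4\bigl(\sfGamma^{\tensorv^*}(P_{t-s}f)+\varepsilon\bigr)^{3/2}}\right),
\]
which is nonnegative by the second inequality of Proposition \ref{prop:DoubleGamma}. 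Hence $\Phi(0) \leq \Phi(t)$, and sending $\varepsilon \to 0$ yields the first estimate.

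For the second estimate, take $\Psi(s) = P_s\bigl(\sqrt{\sfGamma^{\tensorh^*}(P_{t-s}f) + \varepsilon} + \sfGamma^{\tensorv^*}(P_{t-s}f)\bigr)$. Applying the first inequality of Proposition \ref{prop:DoubleGamma} to the $\sqrt{\sfGamma^{\tensorh^*}}$-part, using the decomposition $\sfGamma^{\tensorh^*}_2 = \sfGamma^{\tensorh^* + \ell\tensorv^*}_2 - \ell\,\sfGamma^{\tensorv^*}_2$, and noting that the $\sfGamma^{\tensorv^*}$-part contributes $P_s \sfGamma^{\tensorv^*}_2 \geq 0$, differentiation gives after simplification
\[
\Psi'(s) \;\geq\; \tfrac{1}{2} P_s\!\left(\frac{-\ell\,\sfGamma^{\tensorv^*}_2 + (\varrho_1 - \ell^{-1})\sfGamma^{\tensorh^*} + \varrho_2\,\sfGamma^{\tensorv^*}}{\sqrt{\sfGamma^{\tensorh^*}+\varepsilon}}\right) \;+\; P_s\,\sfGamma^{\tensorv^*}_2.
\]
Since Proposition \ref{prop:DoubleGamma} allows $c, \ell$ to vary pointwise, choose $\ell = 2\sqrt{\sfGamma^{\tensorh^*}+\varepsilon}$, so that the two $\sfGamma^{\tensorv^*}_2$-contributions cancel, and $c = \sqrt{\sfGamma^{\tensorh^*}+\varepsilon}/(\MRicHV\sqrt{\sfGamma^{\tensorv^*}})$ (taking $c$ arbitrarily large where $\MRicHV\sqrt{\sfGamma^{\tensorv^*}}$ vanishes) to optimize the remaining mixed horizontal–vertical term. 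The elementary AM--GM estimate $2\MRicHV\sqrt{\sfGamma^{\tensorv^*}} \leq k\,\sfGamma^{\tensorv^*} + \MRicHV^2/k$, combined with $\MRicHV^2 \leq k$ and $\rRicH + k \geq 0$ built into the definition of $k$, then delivers
\[
\Psi'(s) + \tfrac{k}{2}\,\Psi(s) \;\geq\; -1.
\]
Multiplying by $e^{ks/2}$ and integrating from $0$ to $t$ produces $\Psi(0) \leq e^{kt/2}\Psi(t) + \tfrac{2}{k}(e^{kt/2}-1)$, and sending $\varepsilon \to 0$ delivers the second estimate.

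To conclude, \eqref{CondA} follows by approximating any $f \in C_b^\infty(M)$ with $\sfGamma^{\tensorh^* + \tensorv^*}(f)$ bounded by compactly supported functions and passing the two estimates to the limit. For non-explosion, equivalently $P_t 1 = 1$, apply the second estimate to a sequence of cutoff functions $\eta_n \uparrow 1$ with uniformly bounded Riemannian gradient (available thanks to completeness of $(M, \tensorg)$); the resulting uniform gradient bound on $P_t \eta_n$, combined with a Lyapunov-type argument based on the exhaustion function $F = \sqrt{1+r^2}$ where $r$ is the Riemannian distance, rules out explosion of the diffusion. The main obstacle is clearly the second interpolation: the joint pointwise choices of $c$ and $\ell$, the cancellation of the two $\sfGamma^{\tensorv^*}_2$-terms, and the AM--GM bound must conspire to reproduce exactly the constants $k$ and $\tfrac{2}{k}(e^{kt/2}-1)$ in the statement.
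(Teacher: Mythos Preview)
Your formal interpolation scheme is exactly the one the paper uses, and your application of Proposition~\ref{prop:DoubleGamma} together with pointwise choices of $c$ and $\ell$ is in the right spirit. The genuine gap is not in the calculus but in its justification. The identity
\[
\Phi'(s)=P_s\!\left[\bigl(\tfrac12 L+\partial_s\bigr)\sqrt{\sfGamma^{\tensorv^*}(P_{t-s}f)+\varepsilon}\right]
\]
presupposes that you may differentiate under $P_s$ and that the integrand is sufficiently controlled; both implicitly use either non-explosion ($P_t1=1$) or a priori $L^p$ bounds on $\sfGamma$ and $\sfGamma_2$ of $P_{t-s}f$. Since non-explosion is part of what you are proving, this is circular as written. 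The paper spends most of its effort here: it first shows that $\srL$ and the full Laplacian $\Delta$ commute on smooth functions and hence spectrally commute, deduces $\Dom(\Delta)\subseteq\Dom(\srL)$, and uses this to prove that $\int\sfGamma^{\tensorh^*}$, $\int\sfGamma^{\tensorv^*}$, $\int\sfGamma_2^{\tensorh^*}$, $\int\sfGamma_2^{\tensorv^*}$ of $P_tf$ are all bounded by $C\|f\|_{\Dom(\srL^2)}^2$. Only then can it invoke a deterministic parabolic comparison lemma (which replaces your differentiation of $\Phi$ and needs those integrability conditions as hypotheses). The paper's closing remark says explicitly that the probabilistic/semigroup argument you sketch becomes legitimate \emph{after} one already knows infinite lifetime.

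A second, separate problem is your route to $P_t1=1$. The Lyapunov function $F=\sqrt{1+r^2}$ satisfies $\Delta F\le CF$ by the Riemannian Laplacian comparison theorem, but you need $\srL F\le CF$, and there is no sub-Laplacian comparison theorem available here; the paper only uses that device when $\srL$ lifts a Riemannian Laplacian through a submersion with compact fibres. Instead, the paper derives $P_t1=1$ \emph{from} the gradient estimates: applying them to cut-offs $g_n\uparrow 1$ with $\|\sfGamma^{\tensorg^*}(g_n)\|_{L^\infty}\to0$ forces $\sfGamma^{\tensorg^*}(P_t1)=0$, hence $P_t1$ is constant and equal to $1$. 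Finally, in your second interpolation the replacement of $\sfGamma^{\tensorh^*}/(\sfGamma^{\tensorh^*}+\varepsilon)$ by $1$ is only valid for terms of a fixed sign, and the residual term $\varepsilon\,\sfGamma_2^{\tensorh^*}/Y^3$ can be negative; the paper avoids this by working with $y_{t,\varepsilon}=(\sfGamma^{\tensorh^*}+\varepsilon^2)^{1/2}-\varepsilon$ and choosing $c=\ell\,y_{t,\varepsilon}$, which makes the residuals cancel cleanly.
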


We remind the reader that $\mcalR >0$ can only happen if $TM$ is spanned by
$\calH$ and first order brackets of its sections.
The proof is similar to the proof given for the special case of sub-Riemannian manifolds
with transverse symmetries of Yang-Mills type given in \cite[Section~3
\& Theorem~4.3]{BaGa12}. In our terminology, these are sub-Riemannian manifolds
with a trivial, integrable, metric-preserving complement $\calV$ satisfying $\MRicHV = 0$.
The key factors that allow us to use a similar
approach are Proposition~\ref{prop:DoubleGamma} and the relation
$[\srL, \Delta] f = 0$, where $\Delta$ is the Laplace operator of $\tensorg$ and $f
\in C^\infty(M)$. The latter results follow from
Lemma~\ref{lemma:srLDeltaCommute}~(c) in the Appendix. Since the proof
uses spectral theory and calculus on graded forms, it is left to
Appendix~\ref{sec:ProofCondARiemann}. Theorem~\ref{th:CondARiemann} also holds in some cases
when $\calV$ is not an integrable subbundle. See Appendix~\ref{sec:NotIntegrable} for details.

\subsection{General formulation}
Let $L$ be an operator as in Section~\ref{sec:GeneralOperator} with
corresponding $\frac{1}{2} L$-diffusion $X(x)$ satisfying $X_0(x) = x$ and semigroup $P_t$.
We will assume that $L$ satisfies
\eqref{CDstar} with $\tensorv^*$ and the constants $n, \rho_1,
\rho_{2,0}$ and $\rho_{2,1}$ being implicit. Note that if $L$ satisfies
\eqref{CDstar} for some value of the previously mentioned constants,
then $L$ also satisfies the same inequality for any larger $n$ or
smaller values of $\rho_1, \rho_{2,0}$ or $ \rho_{2,1}$. For the
remainder of the section, no result will depend on $n$, however, we
will need condition \eqref{CondA} to hold.

Our proofs rely on the fact that, for any smooth function $$(t,x)
\mapsto u_t(x) \in C^\infty([0,\infty) \times M, \real),$$ we have 
a stochastic process $Y_t = u_t \circ X_t$ such that
$dY_t$ equals $\left((\partial_t + L) u_t\right) \circ X_t\, dt$ modulo differentials of local martingales. Hence, if $(\partial_t + L)
u_t \geq 0$ and if $u_t(\newbullet)$ is bounded for
every fixed $t$, then $Y_t$ is a (true) submartingale and $\expect[
Y_t]$ is an increasing function with respect to $t$.

In our presentation, we will usually state the result for a smooth,
bounded function $f \in C_b^\infty(M) $ with bounded gradient 
$\sfGamma^{\tensorh^* + \tensorv^*}(f) \in C^\infty_b(M)$.
Our results generalize theorems found in \cite{BaGa12,BaBo12,BBG14}.

We will first construct a general type of inequality, from which many
results can be obtained. See \cite[Theorem 1.1 (1)]{Wan13} for a similar
result, with somewhat different assumptions.

\begin{lemma} \label{lemma:ALambdaC} Assume that $L$ satisfies the conditions
  \eqref{CDstar} and \eqref{CondA}. For any $T >0$, let $a, \ell \in
  C([0,T], \real)$ be two continuous functions which are smooth and
  positive on~$(0,T)$. Assume that there exist a constant $C$, such
  that
  \begin{equation} \label{ALambdaC} \dot a(t) + \left( \rho_1 -
      \frac{1}{\ell(t)} \right) a(t) + C \geq 0, \quad \dot \ell(t) +
    \rho_{2,0} + \left(\rho_{2,1} + \frac{\dot a(t)}{a(t)} \right)
    \ell(t) \geq 0,
  \end{equation}
  holds for every $t \in (0,T)$. Then
  \begin{align*}
    a(0) \sfGamma^{\tensorh^* + \ell(0) \tensorv^*}(P_T f) \leq a(T)
    P_T \sfGamma^{\tensorh^* + \ell(T) \tensorv^*}(f) + C \left( P_T
      f^2 - (P_T f)^2 \right)
  \end{align*}
  for any $f \in C^\infty_b(M)$ with $\sfGamma^{\tensorh^*+\tensorv^*}(f) \in C_b^\infty(M)$.
\end{lemma}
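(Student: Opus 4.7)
The plan is the Bakry--\'Emery interpolation argument, adapted to the two-form setting $\sfGamma^{\tensorh^* + \ell \tensorv^*}$. For fixed $T > 0$ and $f$ as in the hypotheses, set $u_s := P_{T-s} f$ and introduce on $(0,T)$ the auxiliary functional
$$\phi(s) := a(s)\, P_s\!\left(\sfGamma^{\tensorh^* + \ell(s) \tensorv^*}(u_s)\right).$$
Its boundary values $\phi(0) = a(0)\, \sfGamma^{\tensorh^*+\ell(0)\tensorv^*}(P_T f)$ and $\phi(T) = a(T)\, P_T \sfGamma^{\tensorh^*+\ell(T)\tensorv^*}(f)$ are precisely the two sides of the target inequality, so it will suffice to show
$$\phi(T) - \phi(0) \geq -C\bigl(P_T f^2 - (P_T f)^2\bigr).$$

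The first key step is to compute $\phi'(s)$. Using $\partial_s u_s = -\tfrac{1}{2} L u_s$ together with the elementary identity $\tfrac12 L \sfGamma^{\tensors^*}(g) - \sfGamma^{\tensors^*}(g, Lg) = \sfGamma_2^{\tensors^*}(g)$, a direct calculation gives
$$\phi'(s) = P_s\!\Bigl(\dot a(s)\, \sfGamma^{\tensorh^* + \ell(s) \tensorv^*}(u_s) + a(s)\, \sfGamma_2^{\tensorh^* + \ell(s) \tensorv^*}(u_s) + a(s)\dot\ell(s)\, \sfGamma^{\tensorv^*}(u_s)\Bigr).$$
Now apply \eqref{CDstar} to bound $\sfGamma_2^{\tensorh^* + \ell \tensorv^*}(u_s)$ from below and discard the nonnegative $(a/n)(L u_s)^2$ term. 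After regrouping, the coefficient of $\sfGamma^{\tensorh^*}(u_s)$ inside $P_s$ is $\dot a + a(\rho_1 - \ell^{-1})$, which is $\geq -C$ by the first inequality of \eqref{ALambdaC}; the coefficient of $\sfGamma^{\tensorv^*}(u_s)$ is $a\bigl(\dot\ell + \rho_{2,0} + (\rho_{2,1} + \dot a/a)\ell\bigr)$, which is $\geq 0$ by the second inequality of \eqref{ALambdaC} together with $a > 0$. Thus
$$\phi'(s) \geq -C\, P_s\sfGamma^{\tensorh^*}(u_s).$$

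Integrating this over $(0,T)$ and inserting the standard identity
$$P_T f^2 - (P_T f)^2 = \int_0^T P_s\sfGamma^{\tensorh^*}(P_{T-s} f)\, ds,$$
itself obtained by differentiating $s \mapsto P_s((P_{T-s} f)^2)$ and using $\sfGamma^{\tensorh^*}(g) = \tfrac12 L g^2 - g L g$, yields the claim directly.

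The main obstacle is the rigour needed to execute this formal computation, since $f$ is only bounded (not compactly supported) and $a, \ell$ are merely continuous at the endpoints. One first runs the argument on a subinterval $[\varepsilon, T-\varepsilon]$ and then lets $\varepsilon \downarrow 0$, relying on continuity of $\phi$ to recover the boundary values. The commutation of $P_s$ with time-differentiation and with the operator $L$ hidden inside $\sfGamma_2^{\tensorh^*+\ell\tensorv^*}$ is exactly what requires the hypothesis \eqref{CondA}: it provides a uniform-in-$s$ bound on $\sfGamma^{\tensorh^* + \tensorv^*}(u_s)$, which in turn bounds $\sfGamma^{\tensorh^*}(u_s)$ and $\sfGamma^{\tensorv^*}(u_s)$, supplying the integrability and dominations needed to justify all interchanges, while $P_t 1 = 1$ ensures the heat flow has no boundary loss.
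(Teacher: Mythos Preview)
Your proof is correct and follows the same Bakry--\'Emery interpolation scheme as the paper. The paper phrases the argument stochastically: it forms the process
\[
Y_t = a(t)\,\sfGamma^{\tensorh^* + \ell(t)\tensorv^*}(u_t)\circ X_t + C\,u_t^2\circ X_t
\]
along the $\tfrac12 L$-diffusion $X_t$, shows via \eqref{CDstar} and \eqref{ALambdaC} that $dY_t$ is nonnegative modulo local martingales, and then invokes \eqref{CondA} for boundedness to upgrade $Y_t$ to a true submartingale, whence $\expect[Y_T]\geq\expect[Y_0]$. Your version is the analytic transcription: you work with $\phi(s)=\expect[a(s)\,\sfGamma^{\tensorh^*+\ell(s)\tensorv^*}(u_s)\circ X_s]$ directly, keep the $C u_s^2$ term separate, and recover it at the end via the variance identity $P_T f^2-(P_T f)^2=\int_0^T P_s\sfGamma^{\tensorh^*}(u_s)\,ds$. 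The two are formally equivalent; the paper's packaging avoids that last integral identity by absorbing $C u_t^2$ into the submartingale from the start, while yours makes the role of the constant $C$ as a variance correction more transparent.
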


\begin{proof}
  Define $u_t$ by $u_t(x) = P_{T-t} f(x)$ for any $0 \leq t \leq T$, $x\in M$. 
For any $x \in M$, consider the stochastic process
$$Y_t(x) := a(t) \sfGamma^{\tensorh^* + \ell(t) \tensorv^*}(u_t) \circ X_t(x) + C u_t^2 \circ X_t(x).$$
Write $\equivM$ for equivalence modulo differentials of local
martingales. Then, if \eqref{ALambdaC} holds
\begin{align*}
  dY_t \equivM & \left(\dot a(t) \sfGamma^{\tensorh^* + \ell(t) \tensorv^*}(u_t) + a(t) \dot \ell(t) \sfGamma^{\tensorv^*}(u_t) + C \sfGamma^{\tensorh^*}(u_t) \right)\circ X_t dt \\
  & + a(t) \sfGamma^{\tensorh^* + \ell(t) \tensorv^*}_2(u_t) \circ X_t dt \\
  \geq & \left( \dot a(t) + ( \rho_1 - \ell(t)^{-1} ) a(t) + C\right) \sfGamma^{\tensorh^*}(u_t) \circ  X_t dt \\
  & + a(t) \left( \dot \ell(t) + \frac{\dot a(t)}{a(t)} + \rho_{2,0} +
    \rho_{2,1} \ell(t) \right) \sfGamma^{\tensorv^*}(u_t) \circ X_t dt \geq 0.
\end{align*}
Since $Y_t$ is bounded by \eqref{CondA}, it is a true
submartingale.  Hence
\begin{align*}
  \expect[ Y_T] & = a(T) P_T \sfGamma^{\tensorh^* + \ell(T) \tensorv^*}(f) + C P_T f^2 \\
  & \geq \expect[Y_0] = a(0) \sfGamma^{\tensorh^* + \ell(0)
    \tensorv^*}(P_T f) + C (P_T f)^2.
\end{align*}\qed
\end{proof}

\subsection{Gradient bounds}
We give here the first results that follow from
Lemma~\ref{lemma:ALambdaC}.

\begin{proposition} \label{prop:GradBound} Assume that $L$ satisfies conditions
  \eqref{CDstar} and \eqref{CondA}. Let $f \in C^\infty_b(M) $ be any smooth bounded function satisfying $\sfGamma^{\tensorh^* + \tensorv^*}(f) \in C_b^\infty(M)$.
  \begin{enumerate}[\rm(a)]
  \item For any constant $\ell > 0$, if $\alpha(\ell) =
    \min\left\{\rho_1 - \frac{1}{\ell}, \rho_{2,1} +
    \frac{\rho_{2,0}}{\ell} \right\},$ then
$$\sfGamma^{\tensorh^* + \ell \tensorv^*}(P_t f) \leq e^{-\alpha(\ell) t} P_t \sfGamma^{\tensorh^* + \ell \tensorv^*}(f).$$
\item Assume that $\rho_{2,0} >0$ and let $k_1 = \max\{ 0,
  -\rho_1\}$ and $k_2 = \max \{ 0, - \rho_{2,1}\}$.  Then
  \begin{align*} t \sfGamma^{ \tensorh^*}(P_t f) \leq \left(1 +
      \frac{2}{\rho_{2,0}} + \left(k_1 + \frac{k_2}{\rho_2} \right)
      t \right) (P_t f^2 - (P_t f)^2) .  \end{align*}
\item Assume that $\rho_1 \geq 0$, $\rho_{2,1} \geq 0$ and $\rho_{2,0} >0$. Then
$$\frac{1 - e^{-\rho_1 t}}{\rho_1} \sfGamma^{\tensorh^*}(P_tf) \leq \left(1 + \frac{2}{\rho_{2,0}}  \right) \left( P_t f^2 - (P_t f)^2 \right),$$
where we interpret $(1- e^{-\rho_1 t})/{\rho_1}$ as $t$ when $\rho_1=0$.
\item Assume that $\rho_1$, $\rho_{2,0}$ and $\rho_{2,1}$ are
  nonnegative. Then for any $\ell >0$
$$ \frac{\ell}{\ell +t } \left(P_t f^2 - (P_t f)^2 \right) \leq t P_t \sfGamma^{\tensorh^* + (\ell+t) \tensorv^*}(f).$$
\end{enumerate}
\end{proposition}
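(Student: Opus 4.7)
The proof will be a direct application of Lemma~\ref{lemma:ALambdaC} with a careful choice of parameters. To avoid a clash with the constant $\ell$ appearing in the statement of (d), I temporarily denote the time-dependent function in the lemma by $\tilde{\ell}(s)$. The plan is to take
$$a(s) = s, \qquad \tilde{\ell}(s) = \ell + s, \qquad C = -\frac{\ell}{\ell + t}, \qquad s \in [0,t].$$
With these choices the boundary data are $a(0) = 0$, $a(t) = t$, $\tilde{\ell}(0) = \ell$ and $\tilde{\ell}(t) = \ell + t$, so the conclusion of Lemma~\ref{lemma:ALambdaC} at time $T = t$ reads
$$0 \leq t\, P_t \sfGamma^{\tensorh^* + (\ell + t)\tensorv^*}(f) - \frac{\ell}{\ell + t}\bigl(P_t f^2 - (P_t f)^2\bigr),$$
which, upon rearrangement, is exactly the estimate of~(d).

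It remains to check the two differential inequalities in~\eqref{ALambdaC} on $(0,t)$. Since $\dot a(s) = 1$ and $\dot{\tilde\ell}(s) = 1$, the first inequality becomes
$$\dot a(s) + \Bigl(\rho_1 - \frac{1}{\tilde\ell(s)}\Bigr) a(s) + C = \frac{\ell}{\ell + s} - \frac{\ell}{\ell + t} + \rho_1\, s \geq 0,$$
where I have used $1 - s/(\ell+s) = \ell/(\ell+s)$. This holds because $s \mapsto \ell/(\ell + s)$ is nonincreasing on $[0,t]$ and $\rho_1 \geq 0$. The second inequality reduces to
$$\dot{\tilde\ell}(s) + \rho_{2,0} + \Bigl(\rho_{2,1} + \frac{\dot a(s)}{a(s)}\Bigr) \tilde\ell(s) = 1 + \rho_{2,0} + \Bigl(\rho_{2,1} + \frac{1}{s}\Bigr)(\ell + s) \geq 0,$$
which is immediate since $\rho_{2,0}, \rho_{2,1} \geq 0$ and $\ell, s > 0$. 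Finally, $f \in C^\infty_b(M)$ with bounded carré du champ, so Lemma~\ref{lemma:ALambdaC} applies, and we are done.

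The only genuine difficulty is guessing the triple $(a, \tilde\ell, C)$; once it is found, the verification is routine. The guiding principle is that $C = -\ell/(\ell + t)$ is the minimal admissible constant in the first inequality of~\eqref{ALambdaC} under the nonnegativity of $\rho_1$, and this minimal choice is exactly what produces the prefactor $\ell/(\ell + t)$ in the desired estimate.
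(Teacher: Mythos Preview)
Your proposal treats only part~(d); parts (a)--(c) are left unaddressed, though in the paper each is likewise obtained from Lemma~\ref{lemma:ALambdaC} by choosing $a$, $\ell$ and $C$ appropriately.

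For part~(d) itself, your argument is correct and follows the same template as the paper: apply Lemma~\ref{lemma:ALambdaC} on $[0,t]$ with $a(s)=s$ and $C=-\ell/(\ell+t)$. The only difference is the auxiliary function: the paper takes $\ell(s)=\dfrac{(\ell+t)\,s}{t}$, which vanishes at $s=0$, whereas you take $\tilde\ell(s)=\ell+s$. Both choices satisfy the differential inequalities~\eqref{ALambdaC} under the nonnegativity hypotheses on $\rho_1,\rho_{2,0},\rho_{2,1}$, and both yield the same conclusion because $a(0)=0$ annihilates the left-hand side of the lemma regardless of the value of $\tilde\ell(0)$. So for (d) the two proofs are essentially the same, differing only in this harmless choice.
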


\begin{proof}
  For all of our results (a)--(c), we will use
  Lemma~\ref{lemma:ALambdaC}.
  \begin{enumerate}[(a)]
  \item Let $\ell(t) = \ell$ be a constant, choose $C =0$ and put
    $a(t) = e^{-\alpha(\ell) t}$. Then \eqref{ALambdaC} is satisfied
    and we obtain
$$\sfGamma^{\tensorh^* + \ell \tensorv^*}(P_Tf) \leq e^{-\alpha(\ell) T} P_T\sfGamma^{\tensorh^* + \ell \tensorv^*}(P_T f).$$
\item For any $T \geq0 $, consider $a(t) = T-t$ and $\ell(t) =
  \frac{\rho_{2,0}}{T k_2 +2} (T-t)$. Then
  \begin{align*} & \dot \ell(t) + \rho_{2,0} + \left(\rho_{2,1} +
      \frac{\dot a(t)}{a(t)} \right) \ell(t) \geq 0, \end{align*} and
$$\dot a(t) + (\rho_1  - \ell(t)^{-1})a(t) \geq -1 -k_1 - \frac{Tk_2 +2}{\rho_{2,0}},$$
so \eqref{ALambdaC} is satisfied if we define $C= 1 + k_1 T + \frac{T
  k_2 +2}{\rho_{2,0}}$. Using Lemma~\ref{lemma:ALambdaC}, we obtain
\begin{align*} T \sfGamma^{\tensorh^* + \frac{\rho_{2,0}}{T k_2 +2} T
    \tensorv^*}(P_T f) \leq C (P_T f^2) - C (P_T f)^2 .  \end{align*}
\item Since the case $\rho_1 =0$ is covered in (b), we can assume $\rho_1 > 0$. Define
$$a(t) = \frac{1 - e^{-\rho_1 (T-t)}}{\rho_1}$$
and let
$$\ell(t) = \rho_{2,0} \frac{\int_t^T a(s) \, ds }{a(t)} = \rho_{2,0} \frac{e^{-\rho_1(T-t)} -1 + \rho_1 (T-t)}{\rho_1 (1- e^{-\rho_1 (T-t)})} .$$
Note that $\lim_{t \uparrow T} \ell(t) = 0$, while $\lim_{t\uparrow T} {a(t)}/{\ell(t)} ={2}/{\rho_{2,0}}$.
The latter number is also an upper bound for ${a(t)}/{\ell(t)}$ since
$$\frac{d}{dt} \frac{a(t)}{\ell(t)} = \frac{a(t) \left(2  \dot a(t) \int_t^T a(s) \, ds + a(t)^2 \right)}{\rho_{2,0} (\int_t^T a(s) ds)^2} > 0$$
from the fact that
\begin{align*} \qquad& 2  \dot a(t) \int_t^T a(s) ds + a(t)^2 \\  
&= \frac{1}{\rho^2_1} \left( - 2 e^{-\rho_1(T-t)} \left(e^{-\rho_1(T-T)} - 1 + \rho_1(T-t)  \right) + (1-e^{-\rho_1(T-t)})^2 \right) \\
& = \frac{1}{\rho_1^2} \left( - 2\rho_1 (T-t) e^{-\rho_1(T-t)} + 1-e^{-2\rho_1(T-t)} \right) .\end{align*}
and that $s \mapsto 1 -e^{-2s} -2x e^{-2s}$ is an increasing function, vanishing at $s = 0$.
We can then define $C = 1 + \frac{2}{\rho_{2,0}} $ such that $a(t), \ell(t)$ and $C$ satisfies \eqref{ALambdaC}.

\item Define $a(t) = t$, $\ell(t) = \frac{(\ell+T) t}{T}$ and $C =
  - \frac{\ell}{\ell +T}$, then \eqref{ALambdaC} is satisfied.\qed
\end{enumerate}
\end{proof}
We see here that the results of (a) and (d) cannot be stated
independently of a choice of co-metric $\tensorv^*$. However, in the case of (a),
this does help us to get global statements that are independent of $\tensorv^*$.

\subsection{Bounds for the $L^2$-norm of the gradient and the Poincar\'e inequality}
We want to use an approach similar to what is used in \cite[Corollary
2.4]{BaBo12} to obtain a global inequality from the pointwise
estimate in Proposition~\ref{prop:GradBound}~(a) which is independent
of~$\tensorv^*$.
\begin{lemma} \label{lemma:RemoveV} Let $L \in \Gamma(T^2M)$ be a
  second order operator without constant term and with $\tensorq_L =
  \tensorh^*$ positive semi-definite. Assume also that there exists a volume form
  $\vol$, such that
$$\int_M f L g \dvol = \int_M g Lf \dvol, \quad f,g \in C_c^\infty(M),$$
and that $L$ is essentially self-adjoint on compactly supported functions $C_c^\infty(M)$.

Let $P_tf$ be the semigroup defined as in Section~\textup{\ref{sec:GeneralOperator}} 
and let $b:C^\infty_c(M) \times [0,\infty) \to \real$ be any function such that 
\begin{equation} \label{L1gradientFull} \| \sfGamma^{\tensorh^*}(P_t
  f) \|_{L^1} \leq b(f,t), \quad \text{for any } f \in C_c^\infty(M),\ 
  t >0.
\end{equation}
Assume that $\beta(f,t) := \lim_{T \to \infty} b(f,T)^{t/T}$
exist for every $t > 0$. Then
$$\| \sfGamma^{\tensorh^*} (P_t f) \|_{L^1} \leq \beta(f,t) \|\sfGamma^{\tensorh^*}(f)\|  \quad \text{for any } f \in C_c^\infty(M). $$
\end{lemma}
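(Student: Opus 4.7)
The plan is to exploit the essential self-adjointness of $L$ to recast $\|\sfGamma^{\tensorh^*}(P_tf)\|_{L^1}$ as the Laplace transform of a positive spectral measure, so that log-convexity in $t$ lets us interpolate between the explicit value at $t=0$ and the assumed bound at large $T$. Since the hypothesis only gives growth $b(f,T)$ at large $T$, an interpolation inequality is the natural device, and log-convexity on $[0,T]$ followed by sending $T\to\infty$ produces the sharp asymptotic rate $\beta(f,t)$.

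Concretely, let $A$ be the unique self-adjoint extension of $-\tfrac12 L$ on $L^2(M,\vol)$; it is nonnegative since $\langle g,Ag\rangle=\tfrac12\int_M\sfGamma^{\tensorh^*}(g)\dvol\geq 0$ for $g\in C_c^\infty(M)$, and $P_t=e^{-tA}$. For $f\in C_c^\infty(M)$ let $\mu_f$ denote the spectral measure of $A$ at $f$. Since $L$ is hypoelliptic, $P_tf$ is smooth, and integration by parts together with the spectral theorem gives
\begin{equation*}
E(t):=\|\sfGamma^{\tensorh^*}(P_tf)\|_{L^1}
=-\int_M (P_tf)\,L(P_tf)\dvol
=2\langle P_tf,AP_tf\rangle
=2\int_0^\infty\lambda\,e^{-2t\lambda}\,d\mu_f(\lambda).
\end{equation*}
Thus $E$ is (up to the factor $2$) the Laplace transform at $2t$ of the positive measure $\lambda\,d\mu_f(\lambda)$. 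By the Cauchy--Schwarz (or H\"older) inequality applied in $L^2(\lambda\,d\mu_f)$, any such Laplace transform is log-convex: for every $0\leq t\leq T$,
\begin{equation*}
E(t)=E\!\left(\tfrac{T-t}{T}\cdot 0+\tfrac{t}{T}\cdot T\right)
\leq E(0)^{1-t/T}\,E(T)^{t/T}.
\end{equation*}

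Combining this with the hypothesis $E(T)\leq b(f,T)$ yields
\begin{equation*}
E(t)\leq \|\sfGamma^{\tensorh^*}(f)\|_{L^1}^{\,1-t/T}\,b(f,T)^{t/T}
\end{equation*}
for all $T\geq t$. Letting $T\to\infty$, the first factor tends to $\|\sfGamma^{\tensorh^*}(f)\|_{L^1}$ (when it is positive; otherwise $f\equiv 0$ by bracket-generation and the claim is trivial), while by assumption $b(f,T)^{t/T}\to\beta(f,t)$, giving the asserted bound. The only step that requires any care is the spectral identity for $E(t)$: it rests on essential self-adjointness (so that $C_c^\infty(M)$ is a core for $A$ and hence contained in the form domain of $A$), on hypoellipticity (so that $P_tf$ is smooth and the integration by parts is legitimate for $t>0$), and on the fact that the spectral measure of a nonneg\-a\-tive self-adjoint operator yields exactly a Laplace transform, from which log-convexity is automatic.
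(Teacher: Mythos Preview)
Your proof is correct and follows essentially the same route as the paper: express $\|\sfGamma^{\tensorh^*}(P_tf)\|_{L^1}$ via the spectral theorem as a Laplace transform of a positive measure, apply H\"older to obtain the log-convexity interpolation $E(t)\le E(0)^{1-t/T}E(T)^{t/T}$, and send $T\to\infty$. The only cosmetic difference is your choice of generator $A=-\tfrac12 L$ versus the paper's direct use of $L$, and you spell out a bit more carefully why the spectral identity for $E(t)$ is legitimate.
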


\begin{proof}
  Denote the unique self-adjoint extension of $L$ an operator on
  $L^2(M, \vol)$ by the same letter, and let $\Dom(L)$ be its
  domain. Then $e^{t/2 L} f$ is the unique solution in $L^2(M,\vol)$ of
  equation $\partial_t u_t = \frac{1}{2} L u_t$ with initial
  condition $u_0 = f \in C_c^\infty(M)$. Since $P_t f$ is in $L^2(M,\vol)$ whenever
  $f$ is in $L^2(M,\vol)$, we have $P_t f = e^{t/2L}f$ (see Appendix~\ref{sec:Spectral}
  for more details).

  Notice that since $\tensorq_L = \tensorh^*$ is positive
  semi-definite, the self-adjoint operator $L$ is nonpositive. Let
  $\langle \newbullet, \newbullet \rangle$ denote the inner product on
  $L^2(M,\vol)$. Consider the spectral decomposition $L = -
  \int_0^\infty \lambda dE_{\lambda}$. Then since
  $\|\sfGamma^{\tensorh^*}(f)\|_{L^1} = - \langle f, L f \rangle$,
  while $$\|\sfGamma^{\tensorh^*}(P_tf)\|_{L^1} = -\langle f , L P_{2t}
  f \rangle,$$ the H\"older inequality tells us that for any $0 < t < T$,
  \begin{align*}
    \| \sfGamma^{\tensorh^*}(P_tf) \|_{L^1} &= \int_0^\infty \lambda e^{-t\lambda} d\langle E_\lambda f, f\rangle \\
    &\leq \left(\int_0^\infty \lambda e^{-\lambda T} \, d\langle E_\lambda f, f\rangle\right)^{t/T}  \left( \int_0^\infty \lambda \, d\langle E_\lambda f, f \rangle \right)^{(T-t)/T} \\
    &\leq b(f,T)^{t/T} \|\sfGamma^{\tensorh^*}(f)\|_{L^1}^{T/(T-t)} .
  \end{align*}
  Let $T \to \infty$ for the result.\qed
\end{proof}

We combine this result with the curvature-dimension inequality.
\begin{proposition} \label{prop:Poincare} Let $L$ be any second order
  operator such that the Carnot-Cara\-th\'eo\-dory metric $\metricd_{cc}$
  defined by the sub-Riemannian co-metric $\tensorh^* := \tensorq_L$
  is complete. Assume that $L$ satisfies \eqref{CDstar} and that
  \eqref{CondA} holds. Assume also that $L$ is symmetric with respect
  to any volume form $\vol$, i.e.~$\int_M fLg \dvol = \int_M gLf \dvol$ for
  any $f,g \in C_c^\infty(M)$.
  \begin{enumerate}[\rm(a)]
  \item For any $f \in C_c^\infty(M)$,
$$\| \sfGamma^{\tensorh^*}( P_t f) \|_{L^1} \leq e^{-kt} \| \sfGamma^{\tensorh^*}(f)\|_{L^1},$$
where $k = \min\{\rho_1, \rho_{2,1} \}$.
\item Assume that $\rho_1 \geq \rho_{2,1}$ and $\rho_{2,0} >-1$. Then for any $f \in C_c^\infty(M)$,
$$\| \sfGamma^{\tensorh^*}( P_t f) \|_{L^1} \leq e^{- \alpha t} \| \sfGamma^{\tensorh^*}(f)\|_{L^1}, \quad \alpha := \frac{\rho_{2,0} \rho_1 + \rho_{2,1}}{\rho_{2,0} +1}.$$
Furthermore, if $\alpha > 0$ and $\tensorh^* + \tensorv^*$ is a complete Riemannian co-metric, then $\vol(M) < \infty$.
\item Assume that the conditions in \emph{(b)} hold with $\alpha
  >0$ and $\vol(M) < \infty$. Then for any $f \in C_c^\infty(M)$,
$$\| f- f_M \|_{L^2}^2 \leq \frac{1}{\alpha} \int_M \sfGamma^{\tensorh^*}(f) \dvol,$$
where $f_M = {\vol(M)}^{-1} \int_M f \dvol$.
As a consequence, if $\lambda$ is any non-zero eigenvalue of the
Friedrichs extension of~$L$, then $\alpha \leq - \lambda$.
\end{enumerate}
\end{proposition}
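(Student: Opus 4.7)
The three parts share a common engine. The pointwise estimate of Proposition~\ref{prop:GradBound}~(a) gives, for each fixed $\ell > 0$,
$$\sfGamma^{\tensorh^* + \ell \tensorv^*}(P_t f) \leq e^{-\alpha(\ell) t}\, P_t \sfGamma^{\tensorh^* + \ell \tensorv^*}(f),\qquad \alpha(\ell) := \min\bigl\{\rho_1 - \ell^{-1},\ \rho_{2,1} + \rho_{2,0}\ell^{-1}\bigr\}.$$
Integrating against $\vol$, and using that $L$ is both symmetric and conservative (so $\int_M P_t g\dvol = \int_M g\dvol$ for $g \in C_c^\infty(M)$), I obtain
$$\|\sfGamma^{\tensorh^*}(P_t f)\|_{L^1} \leq e^{-\alpha(\ell) t}\bigl(\|\sfGamma^{\tensorh^*}(f)\|_{L^1} + \ell\,\|\sfGamma^{\tensorv^*}(f)\|_{L^1}\bigr).$$
This is a bound of exactly the form required by Lemma~\ref{lemma:RemoveV}: the polynomial $\ell$-dependent prefactor drops out of the limit $\beta(f,t) = \lim_{T\to\infty} b(f,T)^{t/T} = e^{-\alpha(\ell)t}$, so the lemma upgrades it to
$$\|\sfGamma^{\tensorh^*}(P_t f)\|_{L^1} \leq e^{-\alpha(\ell)t}\,\|\sfGamma^{\tensorh^*}(f)\|_{L^1}\qquad \text{for every } \ell > 0.$$
The essential self-adjointness of $L$ on $C_c^\infty(M)$ required by the lemma follows from completeness of $\metricd_{cc}$ together with hypoellipticity by a standard Strichartz-type argument.

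Statement (a) now follows by sending $\ell \to \infty$, since $\alpha(\ell) \to \min\{\rho_1,\rho_{2,1}\} = k$. For (b), if $\rho_1 > \rho_{2,1}$ I insert the specific value $\ell_\star = (1+\rho_{2,0})/(\rho_1 - \rho_{2,1})$, at which the two branches of $\alpha(\ell)$ coincide, both equal to $\alpha = (\rho_{2,0}\rho_1 + \rho_{2,1})/(1+\rho_{2,0})$; the degenerate case $\rho_1 = \rho_{2,1}$ reduces to (a) with $k = \rho_{2,1} = \alpha$.

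The assertion $\vol(M) < \infty$ in (b) is the main obstacle. Using the pre-integration pointwise estimate at $\ell_\star$ directly, the full Riemannian-gradient norm in the complete co-metric $\tensorh^* + \tensorv^*$ (equivalent to $\tensorh^* + \ell_\star \tensorv^*$) decays exponentially in $L^1$. For any nonzero $f \in C_c^\infty(M)$ with $f \geq 0$, conservativity preserves $\int_M P_t f \dvol > 0$ and $\|P_t f\|_{L^\infty} \leq \|f\|_{L^\infty}$, while $P_t f$ becomes asymptotically flat in the complete Riemannian geometry. On a complete infinite-volume manifold no nonzero bounded constant carries finite mass, and a cutoff/weak-compactness argument combined with the exponential decay is expected to turn this dichotomy into a contradiction; I expect this rigidity argument to be the technical heart of (b).

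For (c), I use the standard energy identity. Set $g(t) := \|P_t f - f_M\|_{L^2}^2$ for $f \in C_c^\infty(M)$. Using $L\mathbf{1} = 0$, symmetry of $L$, and the integration-by-parts identity $\int u\cdot Lu \dvol = -\|\sfGamma^{\tensorh^*}(u)\|_{L^1}$, differentiation gives $g'(t) = -\|\sfGamma^{\tensorh^*}(P_t f)\|_{L^1}$. Integrating from $0$ to $\infty$ and applying (b),
$$g(0) - \lim_{T\to\infty} g(T) = \int_0^\infty \|\sfGamma^{\tensorh^*}(P_s f)\|_{L^1}\,ds \leq \alpha^{-1}\|\sfGamma^{\tensorh^*}(f)\|_{L^1}.$$
Ergodicity (constants being the only $L^2$-functions in the kernel of the Friedrichs extension of $L$, by hypoellipticity and connectedness) together with the spectral theorem yields $g(T) \to 0$, proving the Poincar\'e inequality. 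For the eigenvalue bound, let $\phi$ be an eigenfunction with $L\phi = \lambda\phi$, $\lambda \neq 0$. Orthogonality to the kernel gives $\phi_M = 0$; combining the identity $\|\sfGamma^{\tensorh^*}(\phi)\|_{L^1} = -\lambda\|\phi\|_{L^2}^2$ with the Poincar\'e inequality forces $\alpha \leq -\lambda$.
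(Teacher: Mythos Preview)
Your treatment of (a), the exponential decay in (b), and (c) is essentially the paper's proof: integrate the pointwise estimate of Proposition~\ref{prop:GradBound}~(a), feed the resulting $L^1$ bound into Lemma~\ref{lemma:RemoveV} to strip off the $\ell$-dependent prefactor, and then optimise over $\ell$ (taking $\ell\to\infty$ for (a), $\ell=\ell_\star$ for (b)). The derivation of the Poincar\'e inequality in (c) via $\|f-f_M\|_{L^2}^2=\int_0^\infty\|\sfGamma^{\tensorh^*}(P_tf)\|_{L^1}\,dt$ is likewise the paper's argument.

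The gap is the finite-volume assertion in (b), which you yourself flag as unproven (``is expected to'', ``I expect''). Your sketch relies on $L^1$ decay of the full Riemannian gradient and an unspecified rigidity/compactness step; this is not how the paper closes it, and it is not clear your $L^1$ route can be completed without already having a Poincar\'e-type inequality. The paper instead goes back to the \emph{pointwise} estimate at $\ell=\ell_\star$ to get $L^\infty$ decay of $\sfGamma^{\tensorh^*}(P_sf)$, then writes, for $f,g\in C_c^\infty(M)$,
\[
\int_M (P_tf-f)\,g\,\dvol \;=\; -\frac12\int_0^t\!\!\int_M \sfGamma^{\tensorh^*}(P_sf,g)\,\dvol\,ds,
\]
bounds this by $\tfrac12\,\|\sfGamma^{\tensorh^*+\ell_\star\tensorv^*}(f)\|_{L^\infty}^{1/2}\bigl(\int_M\sfGamma^{\tensorh^*}(g)^{1/2}\dvol\bigr)\int_0^t e^{-\alpha s/2}\,ds$ via Cauchy--Schwarz, and lets $t\to\infty$. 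By the spectral theorem $P_tf\to P_\infty f$, a constant; if $\vol(M)=\infty$ this constant is $0$, giving
\[
\left|\int_M f\,g\,\dvol\right|\;\le\;\frac{1}{\alpha}\,\|\sfGamma^{\tensorh^*+\ell_\star\tensorv^*}(f)\|_{L^\infty}^{1/2}\int_M\sfGamma^{\tensorh^*}(g)^{1/2}\,\dvol.
\]
Now insert for $f$ a cutoff sequence $f_n\uparrow 1$ with $\|\sfGamma^{\tensorg^*}(f_n)\|_{L^\infty}\to0$ (available because $\tensorg^*=\tensorh^*+\tensorv^*$ is complete): the right side tends to $0$, forcing $\int_M g\,\dvol=0$ for every $g\in C_c^\infty(M)$, a contradiction. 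The point is that the $L^\infty$ bound, not the $L^1$ bound, is what makes the cutoff argument work.
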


\begin{proof}
  \begin{enumerate}[(a)]
  \item By Proposition~\ref{prop:GradBound}~(a), we have $$\|
    \sfGamma^{\tensorh^*}(P_t f)\|_{L^1} \leq e^{- \alpha(\ell) t} \|
    \sfGamma^{\tensorh^* + \ell \tensorv^*}(f) \|_{L^1} $$ with
    $\alpha(\ell) = \min \{ \rho_1 - 1/\ell, \rho_{2,1} + \rho_{2,0}
    /\ell \}$ holds for any $f \in C_c^\infty(M)$. It follows that $\|
    \sfGamma^{\tensorh^*}(P_t f)\|_{L^1} \leq e^{- \alpha(\ell) t} \|
    \sfGamma^{\tensorh^* }(f) \|_{L^1}$ from
    Lemma~\ref{lemma:RemoveV}. For every~$t$, we then take the infimum
    over~$\ell$ to get
$$ \inf_\ell e^{- \alpha t} \leq e^{-kt}  \quad\text{with } k = \min\{ \rho_1, \rho_{2,1} \}.$$
\item With $\alpha(\ell)$ defined as in the proof of (a), note that if
  $\rho_1 \geq \rho_{2,1}$ and if $\rho_2 > -1$, then
$$\inf_\ell e^{- \alpha(\ell) t} 
= \exp\left(- \frac{\rho_{2,0} \rho_1 + \rho_{2,1}}{\rho_{2,0} +1} t\right) = e^{-\alpha t}$$
which gives us the first part of the result.

For the second part, we assume that $\rho_1 > \rho_{2,1}$, since if
$\alpha >0$ with $\rho_1 = \rho_{2,1}$, then we can always decrease
$\rho_{2,1}$ while keeping $\alpha$ positive. For two compactly supported
functions $f,g \in C_c^\infty(M)$, note that
\begin{align*}  \int_M &(P_t f-f)g \dvol = \int_M \int_0^t  \left(\frac{d}{ds}  P_sf \right) g \, ds \dvol \\
  &=  \frac{1}{2} \int_0^t \int_M   (\srL P_sf)  g \dvol ds
   = \frac{1}{2} \int_0^t \int_M \sfGamma^{\tensorh^*}(P_sf , g)
  \dvol ds.
\end{align*}
Hence, by the Cauchy-Schwartz inequality
\begin{align*} \left|\int_M (P_t f-f)g \dvol \right| \leq \frac{1}{2}
  \int_0^t \int_M \|\sfGamma^{\tensorh^*}(P_sf)\|_{L^\infty}^{1/2}
  \sfGamma^{\tensorh^*}(g)^{1/2} \, d\vol,
\end{align*}
which has upper bound
$$ \frac{1}{2} \left\|\sfGamma^{\tensorh^*}(f)+ \frac{\rho_{2,0} +1}{\rho_1 - \rho_{2,1}}\sfGamma^{\tensorv^*}(f) \right\|_{L^\infty}^{1/2} \int_M \sfGamma^{\tensorh^*}(g)^{1/2} \dvol \int_0^t e^{-\alpha s} ds,$$
by Proposition~\ref{prop:GradBound}~(a). From the spectral theorem, we
know that $P_tf$ reaches an equilibrium $P_\infty f$ which is in
$\Dom(L)$ and satisfies $L P_\infty f = 0$. Since this implies
$\sfGamma^{\tensorh^*}(P_\infty f) = 0$, we must have that $P_\infty f$ is a
constant.

Assume that $\vol(M) = \infty$. Then $P_\infty f = 0$ and hence, for
any $f,g \in C_c^\infty(M)$, we have
$$\left| \int_M fg \dvol \right| \leq \frac{1}{2 \alpha} \left\|\sfGamma^{\tensorh^*}(f) + \frac{\rho_{2,0} +1}{\rho_1 - \rho_{2,1}}\sfGamma^{\tensorv^*}(f) \right\|_{L^\infty}^{1/2} \int_M \sfGamma^{\tensorh^*}(g)^{1/2} \dvol .$$
However, since $\tensorg$ is complete, we can find a sequence of
functions $f_n \in C^\infty_c(M)$ such that $f_n \uparrow 1$ while
$\| \sfGamma^{\tensorg^*}(f_n)\|_{L^\infty} \to 0$. Inserting such a sequence for $f$
in the above formula and letting $n \to \infty$, we obtain the
contradiction that $\int_M g \dvol = 0$ for any $g \in C_c^\infty(M)$.
\item Follows from the identity
  \begin{align*} \|f- f_M\|_{L^2}^2
    &= \int_M f^2 \dvol - \frac{1}{\vol(M)} \left(\int_M f \dvol \right)^2\\
    &= - \int_0^\infty \frac{\partial}{\partial t} \int_M (P_t f)^2 \dvol dt\\
    &=  \int_0^\infty \int_M \sfGamma^{\tensorh^*}(P_tf) \dvol dt \leq
    \frac{1}{\alpha}
    \|\sfGamma^{\tensorh^*}(f)\|_{L^1}.\end{align*}\qed
\end{enumerate}
\end{proof}

\section{Entropy and bounds on the heat
  kernel} \label{sec:GeodesicFibers}
\subsection{Commutating condition on $\sfGamma^{\tensorh^*}$ and
  $\sfGamma^{\tensorv^*}$} \label{sec:CommConditions} For some of our
inequalities involving logarithms, we will need the following
condition. Let $L \in \Gamma(T^2M)$ be a second order operator without
constant term with positive semi-definite $\tensorq_L = \tensorh^*$
defined as in \eqref{qform}. Assume that $L$ satisfies either
\eqref{CDstar} or \eqref{CD} with respect to positive
semi-definite~$\tensorv^*$. We say that condition \eqref{CondB} holds if
\begin{equation}
  \label{CondB} \tag{\sf B} \sfGamma^{\tensorh^*}(f, \sfGamma^{\tensorv^*}(f)) = \sfGamma^{\tensorv^*}(f, \sfGamma^{\tensorh^*}(f)) \qquad \text{ for every } f \in C^\infty(M).
\end{equation}
We make the following observation.
\begin{lemma} \label{lemma:iff}
  Let $\tensorg$ be a Riemannian metric on a manifold $M$, with an orthogonal splitting $TM =
  \calH \oplus_{\perp} \calV$ and use this decomposition to define the
  connection $\rnabla$ as in~\eqref{rnabla}. Write $\tensorg |_{\calH} = \tensorh$ and
  $\tensorg|_{\calV} = \tensorv$ and let $\tensorh^*$ and $\tensorv^*$
  be their respective corresponding co-metrics. Then
$$\sfGamma^{\tensorh^*}(f, \sfGamma^{\tensorv^*}(f)) = \sfGamma^{\tensorv^*}(f, \sfGamma^{\tensorh^*}(f))$$
holds for every $f \in C^\infty(M)$ if and only if $\rnabla \tensorv^*
= \rnabla \tensorh^* = 0$. 
\end{lemma}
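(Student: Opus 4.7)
My plan is to translate condition \eqref{CondB} into a pointwise identity involving structure functions of a local adapted frame, then match those constraints against explicit expressions for $\rnabla\tensorh^*$ and $\rnabla\tensorv^*$. I begin by choosing a local $\tensorg$-orthonormal frame $A_1,\ldots,A_n$ of $\calH$ and $V_1,\ldots,V_m$ of $\calV$. Then $\sfGamma^{\tensorh^*}(f)=\sum_i(A_if)^2$ and $\sfGamma^{\tensorv^*}(f)=\sum_j(V_jf)^2$, so two applications of the Leibniz rule give
$$\sfGamma^{\tensorh^*}\bigl(f,\sfGamma^{\tensorv^*}(f)\bigr) - \sfGamma^{\tensorv^*}\bigl(f,\sfGamma^{\tensorh^*}(f)\bigr) = 2\sum_{i,j}(A_if)(V_jf)\,[A_i,V_j](f).$$

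Writing $[A_i,V_j]=\sum_k c_{ij}^k A_k+\sum_l d_{ij}^l V_l$ and using that $df|_x$ can be freely prescribed at any point, condition \eqref{CondB} becomes the pointwise polynomial identity
$$\sum_{i,j,k} c_{ij}^k a_i a_k b_j + \sum_{i,j,l} d_{ij}^l a_i b_j b_l = 0 \qquad\text{for all } a\in\real^n,\ b\in\real^m.$$
The key step is now that the two sums are bihomogeneous of bidegrees $(2,1)$ and $(1,2)$ respectively, so they must vanish separately. Each piece then forces a quadratic form (in $a$, respectively in $b$, for each fixed outer index) to vanish, which is equivalent to the two antisymmetries
$$c_{ij}^k+c_{kj}^i=0 \qquad\text{and}\qquad d_{ij}^l+d_{il}^j=0.$$

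Finally I match these to $\rnabla\tensorh^*=\rnabla\tensorv^*=0$. From the definition \eqref{rnabla}, $\rnabla_{V_j}A_i=\pr_{\calH}[V_j,A_i]=-\sum_k c_{ij}^k A_k$ and $\rnabla_{A_i}V_j=\pr_{\calV}[A_i,V_j]=\sum_l d_{ij}^l V_l$, so orthonormality of the frame gives $(\rnabla_{V_j}\tensorh)(A_i,A_k)=-(c_{ij}^k+c_{kj}^i)$ and $(\rnabla_{A_i}\tensorv)(V_j,V_l)=d_{ij}^l+d_{il}^j$. The covariant derivatives in the remaining (horizontal-on-horizontal and vertical-on-vertical) directions vanish automatically, since $\rnabla_{A_m}A_i=\pr_{\calH}\nabla_{A_m}A_i$ is already $\tensorh$-skew in $(i,k)$ because $\nabla\tensorg=0$ and the splitting is orthogonal, and symmetrically for the vertical side. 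Thus the two antisymmetry conditions above are exactly $\rnabla\tensorh^*=0$ and $\rnabla\tensorv^*=0$, yielding the equivalence. The only mildly delicate step will be the bidegree argument that separates the $c$- and $d$-constraints; everything else reduces to direct computation in a well-chosen frame.
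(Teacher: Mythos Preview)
Your argument is correct and complete. The frame computation leading to
\[
\sfGamma^{\tensorh^*}(f,\sfGamma^{\tensorv^*}(f)) - \sfGamma^{\tensorv^*}(f,\sfGamma^{\tensorh^*}(f)) = 2\sum_{i,j}(A_if)(V_jf)\,[A_i,V_j](f)
\]
is right, the bihomogeneity trick cleanly separates the $c$- and $d$-constraints, and the identification of the resulting symmetry conditions with the vanishing of $\rnabla\tensorh$ and $\rnabla\tensorv$ (hence of their co-metrics) is correct, up to harmless overall signs in the two displayed formulas for $(\rnabla_{V_j}\tensorh)(A_i,A_k)$ and $(\rnabla_{A_i}\tensorv)(V_j,V_l)$.

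The paper takes a more invariant route. Instead of expanding in a frame, it uses directly that $\rnabla_A\tensorh^*=0$ for $A\in\Gamma(\calH)$, $\rnabla_V\tensorv^*=0$ for $V\in\Gamma(\calV)$, and that the torsion $T^{\rnabla}(A,V)$ vanishes on mixed pairs. With these three facts one computes in a single line
\[
\sfGamma^{\tensorh^*}(f,\sfGamma^{\tensorv^*}(f)) - \sfGamma^{\tensorv^*}(f,\sfGamma^{\tensorh^*}(f)) = (\rnabla_{\shh df}\tensorv^*)(df,df) - (\rnabla_{\shv df}\tensorh^*)(df,df),
\]
the cross second-derivative terms cancelling precisely because of the mixed torsion-freeness. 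The separation of the two summands is then argued via the $\rnabla$-invariant splitting $T^*M=\ker\tensorh^*\oplus\ker\tensorv^*$, which plays the same role as your bidegree argument. Your approach has the advantage of being entirely elementary and making the link to structure constants explicit; the paper's approach is shorter and exposes the geometric reason (vanishing mixed torsion of $\rnabla$) behind the cancellation that in your computation happens silently when the $A_iV_jf$ and $V_jA_if$ terms combine into a bracket.
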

\begin{proof} It is simple to verify that for any $A \in \Gamma(\calH)$ and $V \in \Gamma(\calV)$, we have
$$\rnabla_A \tensorh^* = 0, \qquad \rnabla_V \tensorv^* = 0, \qquad T^{\rnabla}(A,V) = 0,$$
where $T^{\rnabla}$ is the torsion of $\rnabla$. Define $\shh$ as in Section~\ref{sec:Summary} and let $\shv$ be defined analogously. Using the properties of $\rnabla$, we get
  \begin{align*}
    \sfGamma^{\tensorh^{*}}(f, \sfGamma^{\tensorv^*}(f)) &-
    \sfGamma^{\tensorv^*}(f, \sfGamma^{\tensorh^*}(f))
    = (\shh df) \|df \|_{\tensorv^*}^2 - (\shv df) \| df \|_{\tensorh^*}^2 \\
    &=  2 \rnabla_{\shh df} df(\shv df) - 2 \rnabla_{\shv df} df (\shh df) \\
    &\quad + (\rnabla_{\shh df} \tensorv^*)(df,df) -  (\rnabla_{\shv df} \tensorh^*)(df,df) \\
    &= (\rnabla_{\shh df} \tensorv^*)(df,df) - (\rnabla_{\shv df}
    \tensorh^*)(df,df).
  \end{align*}
  Since $T^*M = \ker \tensorh^{*} \oplus \ker \tensorv^*$ and since
  $\rnabla$ preserves these kernels, the above expression can only
  vanish for all $f \in C^\infty(M)$ if $\rnabla \tensorh^* = 0$ and
  $\rnabla \tensorv^*=0$.\qed
\end{proof}

Let $L$, $P_t$ and $X(x)$ be as in
Section~\ref{sec:GeneralOperator}. In this section, we explore the
results we obtain when both conditions \eqref{CondA} and \eqref{CondB} hold.
We will also assume that $L$ satisfies
\eqref{CD} rather than \eqref{CDstar}. The reason for this is that in
the concrete case when $L$ is the sub-Laplacian of a sub-Riemannian
manifold with an integrable metric-preserving complement, the
condition \eqref{CondB} along with the assumptions of
Theorem~\ref{th:CD} imply \eqref{CD}, see
Section~\ref{sec:Summary}. For most of the results, we also need the
requirement that $\rho_2 >0$. This means that we can use the results
of \cite{BaGa12,BaBo12,BBG14}.

Let us first establish some necessary identities. Let $P_t$ be the
minimal semigroup of $\frac{1}{2}L$ where $\tensorq_L =
\tensorh^*$. For a given $T >0$, let $u_t := P_{T-t}f$ with $f \in C^\infty(M) \cap L^\infty(M)$.
It is
clear that $(\frac{1}{2}L+ \frac{\partial}{\partial
  t})\sfGamma^{\tensors^*}(u_t) = \sfGamma^{\tensors^*}_2(u_t)$ for
any $\tensors^* \in \Gamma(\Sym^2 TM)$. Also note that if $F\colon U
\subseteq \real \to \real$ be a smooth function, then for any $f \in
C^\infty(M)$ with values in $U$, we obtain
\begin{align*} L F(f) =  F'(f) Lf + F''(f)
  \sfGamma^{\tensorh^*}(f).\end{align*}

Straight-forward calculations lead to the following identities.
\begin{lemma} \label{lemma:partialtL}\quad
  \begin{enumerate}[\rm(a)]
  \item If $u_t = P_{T-t} f$ has values in the domain of $F$, then $$\left(
      \frac{1}{2} L + \frac{\partial}{\partial t} \right) F(u_t) =
    \frac{1}{2} F''(u_t) \sfGamma^{\tensorh^*}(u_t).$$ In particular, if
    $u_t$ is positive then
    \begin{align*}
      &\left(\frac{1}{2} L + \frac{\partial}{\partial t} \right) \log u_t =  - \frac{\sfGamma^{\tensorh^*}(u_t)}{2u^2_t},  \\
      &\left(\frac{1}{2} L + \frac{\partial}{\partial t} \right) u_t\log
      u_t = \frac{\sfGamma^{\tensorh^*}(u_t)}{2u_t} = \frac{1}{2} u_t
      \sfGamma^{\tensorh^*}(\log u_t).
    \end{align*}
  \item For any $\tensors^* \in \Gamma(\Sym^2 TM)$, we have
    \begin{align*}
    &\left(\frac{1}{2} L + \frac{\partial}{\partial t}\right) u_t\sfGamma^{\tensors^*}(\log u_t) \\
    &\quad= u_t\, \sfGamma^{\tensors^*}_2(\log u_t)+ u_t \left(
        \sfGamma^{\tensorh^*}(\log u_t,\sfGamma^{\tensors^*}(\log u_t)) -
        \sfGamma^{\tensors^*}(\log u_t,\sfGamma^{\tensorh^*}(\log u_t))
      \right).
    \end{align*}
  \end{enumerate}
\end{lemma}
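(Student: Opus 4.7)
The proof is a direct computation resting on two ingredients: the chain rule identity $LF(f)=F'(f)Lf+F''(f)\sfGamma^{\tensorh^*}(f)$ (which holds because $\tensorq_L=\tensorh^*$ and so $L$ is a second order operator with no zeroth order term whose square field is $\tensorh^*$), and the definition of $\sfGamma^{\tensors^*}_2$. The fact that $u_t=P_{T-t}f$ means $\partial_t u_t=-\tfrac{1}{2}Lu_t$, so the first-order time derivative cancels against a leading term in $\tfrac{1}{2}Lu_t$ when applied through the chain rule. I would introduce the shorthand $v:=\log u_t$ only where convenient, and treat the general $F$ case first so that the two logarithmic identities follow as special cases.

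For part~(a), I would apply the chain rule identity with $f=u_t$ to compute $\tfrac{1}{2}LF(u_t)=\tfrac{1}{2}F'(u_t)Lu_t+\tfrac{1}{2}F''(u_t)\sfGamma^{\tensorh^*}(u_t)$, then note that $\partial_t F(u_t)=F'(u_t)\partial_tu_t=-\tfrac{1}{2}F'(u_t)Lu_t$. Adding these two, the $F'$ terms cancel and yield the claimed formula. The two logarithmic consequences are then obtained by taking $F(x)=\log x$ (so $F''(x)=-1/x^2$) and $F(x)=x\log x$ (so $F''(x)=1/x$). The last equality $\sfGamma^{\tensorh^*}(u_t)/u_t=u_t\sfGamma^{\tensorh^*}(\log u_t)$ is the scaling identity $du_t=u_t\,d\log u_t$.

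For part~(b), I would first expand $(\tfrac{1}{2}L+\partial_t)(u_t\cdot g)$ using the product identity $L(u_t g)=(Lu_t)g+u_t(Lg)+2\sfGamma^{\tensorh^*}(u_t,g)$ together with $\partial_tu_t=-\tfrac{1}{2}Lu_t$; this gives
\[
\Bigl(\tfrac{1}{2}L+\partial_t\Bigr)(u_t g)=u_t\Bigl(\tfrac{1}{2}L+\partial_t\Bigr)g+\sfGamma^{\tensorh^*}(u_t,g),
\]
and I would then set $g=\sfGamma^{\tensors^*}(\log u_t)$. To handle $(\tfrac{1}{2}L+\partial_t)\sfGamma^{\tensors^*}(v)$ with $v=\log u_t$, I would use the definition $L\sfGamma^{\tensors^*}(v)=2\sfGamma^{\tensors^*}_2(v)+2\sfGamma^{\tensors^*}(v,Lv)$ together with $\partial_t\sfGamma^{\tensors^*}(v)=2\sfGamma^{\tensors^*}(v,\partial_tv)$, so that applying part~(a) to $v=\log u_t$ gives
\[
\Bigl(\tfrac{1}{2}L+\partial_t\Bigr)\sfGamma^{\tensors^*}(v)=\sfGamma^{\tensors^*}_2(v)-\sfGamma^{\tensors^*}\bigl(v,\sfGamma^{\tensorh^*}(v)\bigr).
\]
Finally I would rewrite the leftover $\sfGamma^{\tensorh^*}(u_t,\sfGamma^{\tensors^*}(v))$ as $u_t\sfGamma^{\tensorh^*}(v,\sfGamma^{\tensors^*}(v))$ using $du_t=u_t\,dv$, and collect terms to obtain the stated identity.

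There is no genuine obstacle here; the only thing to watch is sign bookkeeping (the minus sign coming from $u_t=P_{T-t}f$) and the factor $u_t$ produced by the $du_t=u_t\,dv$ substitution, which is precisely what makes the commutator term in (b) appear symmetrically between $\sfGamma^{\tensorh^*}$ and $\sfGamma^{\tensors^*}$ and motivates condition~\eqref{CondB} in the subsequent analysis. Positivity of $u_t$ on the relevant domain, needed to form $\log u_t$, is guaranteed by the positivity result for hypoelliptic heat semigroups recalled in Section~\ref{sec:GeneralOperator}.
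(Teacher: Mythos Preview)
Your proposal is correct and is precisely the straightforward computation the paper alludes to without writing it out; the paper merely states ``Straight-forward calculations lead to the following identities'' and omits the proof. Your bookkeeping of the chain rule, the product rule, the definition of $\sfGamma^{\tensors^*}_2$, and the relation $\partial_t u_t=-\tfrac12 Lu_t$ is exactly what is needed.
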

In particular, $\left(\frac{1}{2}L + \frac{\partial}{\partial t}
\right) u_t \sfGamma^{\tensorh^*}(\log u_t) = u_t
\sfGamma_2^{\tensorh^*}(\log u_t).$ If $\tensorv^*$ is any co-metric
such that $\sfGamma^{\tensorh^*}(f, \sfGamma^{\tensorv^*}(f)) = \sfGamma^{\tensorv^*}(f, \sfGamma^{\tensorh^*}(f)),$
then $\left(\frac{1}{2}L + \frac{\partial}{\partial t} \right) u_t
\sfGamma^{\tensorv^*}(\log u_t) = u_t \sfGamma_2^{\tensorv^*}(\log u_t)$ as
well.

\subsection{Entropy bounds and Li-Yau type inequality} \label{sec:ELY}
We follow the approach of \cite{ArTh10}, \cite[Theorem~5.2]{BaGa12} and \cite[Theorem~1.1]{Wan13}.

\begin{lemma} \label{lemma:ABLambda}  Assume that $L$ satisfies
  \eqref{CD}. Also assume that \eqref{CondA} and \eqref{CondB}
  hold. Consider three continuous functions $a, b, \ell: [0,T] \to
  \real$ with $a(t)$ and $\ell(t)$ being non-negative. Let $C$ be a
  constant. Assume that $a(t), b(t)$ and $\ell(t)$ are smooth for $t
  \in (0,T)$ and on the same domain satisfy
  \begin{equation} \label{ABLambda} \left\{ \begin{array}{l}
        0 \leq \dot a(t) + \left(\rho_1 - \frac{1}{\ell(t)}  - 2 b(t) \right) a(t) + C \\
        0 \leq \dot \ell(t) + \rho_2 + \frac{\dot a(t)}{a(t)}
        \ell(t).  \end{array} \right.
 \end{equation}
Consider a positive function $f \in C^\infty_b(M)$, $f > 0$ with bounded gradient 
$\sfGamma^{\tensorh^* + \tensorv^*}(f) \in C_b^\infty(M)$. Then we have
  \begin{align*}  a(0)\, P_T f\, & \sfGamma^{\tensorh^* + \ell(0) \tensorv^*}(\log P_T f ) - a(T) P_T\left( f \sfGamma^{\tensorh^* + \ell(T) \tensorv^*}(\log f)\right)  \\
    &\leq  2C \left(P_T(f \log f ) - (P_T f) \log P_T f \right)  \\
    & + n \left(\int_0^T a(t) b(t)^2 dt \right) P_T f -  2 \left(\int_0^T a(t) b(t) dt\right) P_T L f. \end{align*}
\end{lemma}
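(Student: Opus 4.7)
The plan is to mimic the submartingale interpolation of Lemma~\ref{lemma:ALambdaC}, but replacing $u_t$ by $\log u_t$ so that an entropy term appears. Set $u_t = P_{T-t}f$, which is strictly positive since $f>0$ and $P_t$ preserves positivity, and consider the candidate
$$Y_t := a(t)\, u_t\, \sfGamma^{\tensorh^* + \ell(t)\tensorv^*}(\log u_t) + 2C\, u_t \log u_t.$$
For the $Y_t(X_t)$ analysis to make sense globally, I would first reduce to the case $f \geq \varepsilon > 0$ by applying the statement to $f + \varepsilon$ (so that $\log u_t$ stays bounded) and then send $\varepsilon \downarrow 0$.

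First, I would compute $(\tfrac12 L + \partial_t)Y_t$ term by term. Lemma~\ref{lemma:partialtL}(a) gives $(\tfrac12 L + \partial_t)(2C u_t \log u_t) = C\, u_t\, \sfGamma^{\tensorh^*}(\log u_t)$. For the first summand of $Y_t$, the product rule contributes $\dot a(t) u_t\, \sfGamma^{\tensorh^*+\ell(t)\tensorv^*}(\log u_t) + a(t)\dot\ell(t) u_t\, \sfGamma^{\tensorv^*}(\log u_t)$, and Lemma~\ref{lemma:partialtL}(b) produces $a(t) u_t\, \sfGamma^{\tensorh^*+\ell(t)\tensorv^*}_2(\log u_t)$ together with a commutator of $\sfGamma^{\tensorh^*}$ and $\sfGamma^{\tensorv^*}$ that vanishes by \eqref{CondB}. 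Next, I would bound $\sfGamma^{\tensorh^* + \ell(t)\tensorv^*}_2(\log u_t)$ from below using \eqref{CD}, and apply the elementary inequality $\frac{1}{n}(L\log u_t)^2 \geq 2b(t)L\log u_t - nb(t)^2$ (obtained by completing the square). The identity $u_t L\log u_t = L u_t - u_t\sfGamma^{\tensorh^*}(\log u_t)$ from Lemma~\ref{lemma:partialtL}(a) converts $u_t L\log u_t$ into a linear combination of $Lu_t$ and $u_t\sfGamma^{\tensorh^*}(\log u_t)$.

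Regrouping, the pointwise bound becomes
\begin{align*}
(\tfrac12 L + \partial_t)Y_t \geq &\,\bigl[\dot a(t) + (\rho_1 - \ell(t)^{-1} - 2b(t))a(t) + C\bigr]\, u_t\, \sfGamma^{\tensorh^*}(\log u_t)\\
&+ a(t)\bigl[\dot\ell(t) + \rho_2 + \tfrac{\dot a(t)}{a(t)}\ell(t)\bigr]\, u_t\, \sfGamma^{\tensorv^*}(\log u_t)\\
&+ 2a(t)b(t)\, Lu_t - n a(t) b(t)^2\, u_t.
\end{align*}
The two bracketed coefficients are nonnegative by \eqref{ABLambda}, so only the last line survives. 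Applying It\^o's formula to $Y_t(X_t)$ and integrating in $t$, the condition \eqref{CondA} ensures that the local martingale part has zero expectation. Moreover, $u_t(X_t) = P_{T-t}f(X_t)$ and $Lu_t(X_t) = P_{T-t}(Lf)(X_t)$ are genuine martingales, so their expectations are constant in $t$ and equal $P_Tf(x)$ and $P_TLf(x)$ respectively. Writing out $\expect[Y_T(X_T)] - Y_0(x)$ and the time integral of the residual then produces exactly the inequality in the statement.

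The main obstacle is the stochastic-analytic justification in the last step: turning the local-martingale identity from It\^o's formula into a true expectation identity. This is where \eqref{CondA} is essential, since it furnishes uniform bounds on $\sfGamma^{\tensorh^*+\tensorv^*}(P_s f)$ on $[0,T]$, which, combined with the lower bound $u_t \geq \varepsilon$ from the approximation, controls both $u_t\sfGamma^{\tensorh^*+\ell(t)\tensorv^*}(\log u_t)$ and $u_t\log u_t$ uniformly along the trajectory. An auxiliary nuisance is that one must know $Lu_t(X_t)$ is a true martingale (so that its expectation is $P_T L f$), which again follows from \eqref{CondA} applied to $Lf$ (bounded because $f,\sfGamma^{\tensorh^*+\tensorv^*}(f) \in C_b^\infty$).
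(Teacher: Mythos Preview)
Your proposal is correct and follows essentially the same route as the paper. The only cosmetic difference is that the paper absorbs the residual drift into the process itself, defining
\[
Y_t = a(t)\,u_t\,\sfGamma^{\tensorh^*+\ell(t)\tensorv^*}(\log u_t)\circ X_t + 2C\,(u_t\log u_t)\circ X_t + \int_0^t a(s)\bigl(n b(s)^2 u_s - 2b(s)Lu_s\bigr)\circ X_s\,ds,
\]
so that $Y$ is directly a local submartingale and the result follows from $\expect[Y_T]\geq\expect[Y_0]$; you instead keep the residual $2a(t)b(t)Lu_t - n a(t)b(t)^2 u_t$ outside and integrate it separately using $\expect[u_s(X_s)]=P_Tf$ and $\expect[Lu_s(X_s)]=P_TLf$, which is an equivalent bookkeeping.
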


\begin{proof}
We have $P_t f > 0$ from our assumptions on $L$ and $f$. 
  For any $T > 0$, define $u_t = P_{T-t} f$ for $0 \leq t \leq T$ and
  \begin{align*} Y_t &=  a(t) \left( u_{t}\, \sfGamma^{\tensorh^*}( \log u_{t}) + \ell(t) u_t \,\sfGamma^{\tensorv^*}(\log u_{t}) \right) \circ X_t \\
    &\quad + 2 C \left(u_{t} \log u_{t} \right) \circ X_t +
    \int_0^t a(s) \left(n b(s)^2 u_{s} - 2 b(s) L u_{s} \right) \circ
    X_s \, ds. \end{align*}
Let us write $\equivM$ for equivalence modulo differentials of local
  martingales.  We use that $$L u_{t} = u_t L \log u_{t} +
  \frac{\sfGamma^{\tensorh^*}(u_{t})}{u_{t}} = u_t L \log u_{t} +
  u_t\sfGamma^{\tensorh^*}(\log u_{t})$$ and \eqref{CD} to obtain
  \begin{align*}
    dY_t \equivM & \left(\dot a(t)- 2 a(t) b(t)  + C\right) u_t \sfGamma^{\tensorh^*}(\log u_t) \circ X_t \, dt \\
    & + \left(\dot a(t) \ell(t) + a(t) \dot \ell(t) \right) \sfGamma^{\tensorv^*}(\log u_t) \circ X_t \, dt \\
    & + a(t) u_t \sfGamma^{\tensorh^*+ \ell(t) \tensorv^*}_2(\log u_t) \circ X_t \, dt \\
    & + a(t) u_{t} \left(n b(t)^2 - 2 b(t) L \log u_{t} \right) \circ X_t \, dt \\
    \geq &   \left(\dot a(t) + \left(\rho_1  - \frac{1}{\ell(t)} - 2 b(t) \right) a(t)  +C \right) u_t \sfGamma^{\tensorh^*}(\log u_t) \circ X_t \, dt \\
    & + a(t) \left(\dot \ell(t) + \rho_2 + \frac{\dot a(t)}{a(t)} \ell(t) \right) \sfGamma^{\tensorv^*}(\log u_t) \circ X_t \, dt \\
    & + n a(t) u_{t} \left(b(t) - L \log u_{t} \right)^2 \circ X_t \, dt.
  \end{align*}
$Y$ is then a submartingale from \eqref{ABLambda}. The result
  follows from $\expect[Y_T] \geq \expect[Y_0]$.\qed
\end{proof}

We look at some of the consequences of Lemma~\ref{lemma:ABLambda}.
\begin{corollary} \label{cor:EntropyLY} Assume that $L$ satisfies
  \eqref{CD} with $\rho_2 >0$, and that \eqref{CondA} and
  \eqref{CondB} also hold. Let $f \in C^\infty_b(M) $ be any
  bounded smooth function with $\sfGamma^{\tensorh^* + \tensorv^*}(f) \in C_b^\infty(M)$.
  \begin{enumerate}[\rm(a)]
  \item \emph{(Entropy bound)} Assume that $\rho_1 \geq 0$ and that $f > 0$. Then for any $x \in M$,
    \begin{align*} \frac{1- e^{-\rho_1 t}}{2 \rho_1} \sfGamma^{\tensorh^* }(\log P_t f)(x) \leq &
      \left(1 + \frac{2}{\rho_2}  \right) P_t \left(
        \frac{f}{P_t f(x)} \log \frac{f}{P_t f (x)}
      \right)(x). \end{align*} 
  \item \emph{(Li-Yau inequality)} Assume that $n < \infty$ in
    \eqref{CD} and that $f \geq 0$, not identically zero. Then for any $1 < \beta < 2$ and for any $t \geq 0$,
    \begin{align} \label{LY} \frac{ \sfGamma^{\tensorh^* }(P_t
        f)}{(P_t f)^2} - \left(a_\beta - b_\beta \rho_1 t
      \right) \frac{P_t L f}{P_t f} \leq \frac{n}{4 t} \left(
        \frac{a_\beta^2}{(2-\beta)(\beta-1)} - \rho_1 t (2
        a_\beta - b_\beta \rho_1 t) \right) \end{align} where
    $\displaystyle a_\beta = \frac{\rho_2 + \beta}{\rho_2}$ and
    $\displaystyle b_\beta = \frac{\beta-1}{\beta}$.
  \end{enumerate}
\end{corollary}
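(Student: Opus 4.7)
Both parts follow from Lemma~\ref{lemma:ABLambda} by choosing the continuous functions $a,b,\ell$ and the constant $C$ appropriately, then evaluating the resulting inequality at a point $x$ and dropping the nonnegative contribution $\ell(0)\sfGamma^{\tensorv^*}$ on the left via $\sfGamma^{\tensorh^*+\ell(0)\tensorv^*} \geq \sfGamma^{\tensorh^*}$. In both cases I would arrange $a(T)=0$ so that the boundary term $a(T)P_T(f\sfGamma^{\tensorh^*+\ell(T)\tensorv^*}(\log f))$ drops out. Positivity of $P_Tf$ needed to make $\log P_Tf$ meaningful is free: for~(a) from $f>0$ and for~(b) from the strict positivity of the semigroup applied to a nonnegative, non-identically-zero function; in~(b) one may also first replace $f$ by $f+\varepsilon$ and pass to the limit.

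For part~(a) I would set $b \equiv 0$ and reuse the pair
\[
a(t) = \frac{1 - e^{-\rho_1(T-t)}}{\rho_1}, \qquad \ell(t) = \rho_2 \frac{\int_t^T a(s)\,ds}{a(t)},
\]
from the proof of Proposition~\ref{prop:GradBound}~(c), together with $C = 1 + 2/\rho_2$. By construction of $\ell$ the second inequality in \eqref{ABLambda} holds with equality, and the first follows from the bound $a(t)/\ell(t) \leq 2/\rho_2$ already established in that proof. Since $a(T)=0$, Lemma~\ref{lemma:ABLambda} yields
\[
\frac{1 - e^{-\rho_1 T}}{\rho_1}\,P_T f(x)\,\sfGamma^{\tensorh^*}(\log P_T f)(x) \leq 2C\bigl(P_T(f\log f)(x) - P_T f(x)\log P_T f(x)\bigr),
\]
and dividing by $2P_Tf(x)$ and recognising the right-hand side as $C\,P_T\bigl(\tfrac{f}{P_Tf(x)}\log\tfrac{f}{P_Tf(x)}\bigr)(x)$ recovers (a) after renaming $T$ to $t$.

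For part~(b) I would take $C=0$ and the ansatz
\[
a(t) = (T-t)^\alpha, \qquad \ell(t) = \frac{\rho_2}{\alpha+1}(T-t), \qquad b(t) = \frac{\rho_1}{2} - \frac{K}{T-t},
\]
with $\alpha := 1/(\beta-1)$ (so $\alpha > 1$ since $1<\beta<2$) and $K := \alpha(\rho_2+\beta)/(2\rho_2)$. Using the identity $\alpha\beta = \alpha+1$, a direct check shows that both inequalities in \eqref{ABLambda} hold with equality. Plugging into Lemma~\ref{lemma:ABLambda}, dividing by $a(0)P_Tf(x) = T^\alpha P_Tf(x)$, dropping the $\ell(0)\tensorv^*$ piece, and evaluating
\[
\int_0^T ab\,dt = \frac{\rho_1 T^{\alpha+1}}{2(\alpha+1)} - \frac{KT^\alpha}{\alpha}, \qquad \int_0^T ab^2\,dt = \frac{\rho_1^2 T^{\alpha+1}}{4(\alpha+1)} - \frac{\rho_1 KT^\alpha}{\alpha} + \frac{K^2 T^{\alpha-1}}{\alpha-1},
\]
produces the coefficients $-(a_\beta - b_\beta\rho_1 T)$ and $\tfrac{n}{4T}\bigl(\tfrac{a_\beta^2}{(2-\beta)(\beta-1)} - \rho_1 T(2a_\beta - b_\beta\rho_1 T)\bigr)$ claimed in \eqref{LY}, via the three identifications $1/(\alpha+1)=b_\beta$, $2K/\alpha = a_\beta$, and $\alpha^2/(\alpha-1) = 1/((2-\beta)(\beta-1))$.

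The main obstacle is purely bookkeeping in~(b): verifying that the three matchings just listed hold precisely for the choice $\alpha = 1/(\beta-1)$, and hence that this rather ad hoc choice of $a,b,\ell$ reproduces the target inequality term by term. A minor technical nuisance is the $-K/(T-t)$ singularity of $b$ at $t=T$, which forces one to apply Lemma~\ref{lemma:ABLambda} on $[0,T-\varepsilon]$ and pass to the limit $\varepsilon \downarrow 0$; this is legitimate because $\alpha>1$ makes both $ab$ and $ab^2$ integrable up to $T$ and the boundary contribution at $T-\varepsilon$ vanishes in the limit.
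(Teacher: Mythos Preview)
Your proposal is correct and follows essentially the same approach as the paper. For part~(a) the choices of $a$, $\ell$, $b\equiv 0$, and $C$ are identical; for part~(b) your parametrization $\alpha = 1/(\beta-1)$ is just the paper's $\alpha$ shifted by one (the paper takes $a(t)=(T-t)^{\alpha+1}$, $\ell(t)=\frac{\rho_2}{\alpha+2}(T-t)$ with $\beta=(\alpha+2)/(\alpha+1)$, which matches your $a(t)=(T-t)^{\alpha}$, $\ell(t)=\frac{\rho_2}{\alpha+1}(T-t)$ under $\alpha_{\text{you}}=\alpha_{\text{paper}}+1$), and your handling of the $t=T$ singularity via a limiting argument is, if anything, more careful than the paper's.
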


The special case of $\beta = {2}/{3}$ in \eqref{LY} is described with
consequences in \cite[Theorem~6.1]{BaGa12}. If $\rho_1 \geq 0$, then for
many application $\beta = \sqrt{(2 + \rho_2)(1+ \rho_2)} - \rho_2$ is
a better choice, as this minimizes the ratio of ${a_\beta^2}/{(2
  -\beta)(\beta-1)}$ over $a_\beta$. With this choice, we obtain
relation
\begin{align} \label{LY2} \frac{1}{D} \frac{ \sfGamma^{\tensorh^* }(P_t f)}{(P_t f)^2} - \frac{P_t
    L f}{P_t f} \leq \frac{N}{t} , \end{align} where
\begin{equation} \label{Ndim} 
N := \frac{n}{4} \frac{(\sqrt{2+\rho_2}
    + \sqrt{1+\rho_2})^2}{\rho_2}, \quad D = \frac{ \sqrt{(2+ \rho_2)(1+\rho_2)}}{\rho_2} .
\end{equation}

\begin{proof} Recall that if $f \in C^\infty_b(M)$ is non-negative and non-zero, then $P_tf$ is strictly positive.
  \begin{enumerate}[(a)]
  \item We will use Lemma~\ref{lemma:ABLambda}. As in
    Proposition~\ref{prop:GradBound}~(c), for any $T \geq 0$, define $$a(t)
    = \frac{1 -e^{-\rho_1(T-t)}}{\rho_1},\quad \ell(t) = \rho_{2,0} \frac{\int_t^T a(s) \, ds }{a(t)} = \rho_{2,0} \frac{e^{-\rho_1(T-t)} -1 + \rho_1 (T-t)}{\rho_1 (1- e^{-\rho_1 (T-t)})} $$ and $C = 1 + {2}/{\rho_2}$. If we define $b(t) \equiv
    0$, condition \eqref{ABLambda} is satisfied. Hence, 
    \begin{align*} \frac{1 - e^{-\rho_1T}}{\rho_1} \frac{ \sfGamma^{\tensorh^* + \frac{\rho_{2} T}{2}
          \tensorv^*}(P_T f)}{P_T f} \leq \left( 1 +
        \frac{2}{\rho_2} \right) \big(P_T(f \log f) - (P_T f)
        \log P_T f \big).  \end{align*} Divide by $P_Tf$ and
    evaluate at $x$ for the result.
  \item For any $\ve > 0$, define $f_{\ve} = f + \ve > 0$. For any $\alpha >0$ and $T >0$, define $\ell(t) =
    \frac{\rho_2}{\alpha +2} (T - t)$, $a(t) = (T-t)^{\alpha+1}$ and
$$b(t) = \frac{1}{2} \left(\rho_1 +\frac{\dot a}{a} - \frac{1}{\ell} \right) = \frac{1}{2} \left(\rho_1 -\left( \alpha+1 + \frac{\alpha+2}{\rho_2} \right) \frac{1}{T-t} \right) .$$
Note that
\begin{align*} \int_0^T a(t) b(t) \, dt &= \frac{1}{2} \left( \frac{\rho_1}{\alpha+2} T^{\alpha+2} - \left(1 + \frac{\alpha+2}{\rho_2(\alpha +1)} \right) T^{\alpha+1} \right),\\
  \int_0^T a(t) b(t)^2 \, dt &=  \frac{1}{4} \left( \frac{\rho_1^2}{\alpha+2} T^{\alpha+2} \right.  \left. - 2 \rho_1 \left(1 + \frac{\alpha+2}{\rho_2(\alpha +1)} \right) T^{\alpha+1} \right. \\
  & \qquad+\left.  \frac{(\alpha+1)^2}{\alpha} \left(1 +
      \frac{\alpha+2}{\rho_2(\alpha +1)} \right)^2 T^{\alpha} \right).
\end{align*}
If we put $C = 0$, then \eqref{ABLambda} is satisfied and so if we use
$f_{\ve}$ in Lemma~\ref{lemma:ABLambda} and let $\ve \downarrow 0$, we get
\begin{align*} & \frac{ \sfGamma^{\tensorh^* + \frac{\rho_2T
      }{\alpha+2} \tensorv^*}(P_T f)}{P_T f} +
  \left(\frac{\rho_1}{\alpha+2} T- \left(1 +
      \frac{\alpha+2}{\rho_2(\alpha +1)} \right) \right) P_T L f\\
  \nonumber &\leq \frac{n}{4} \left(\frac{\rho_1^2}{\alpha+2} T - 2
    \rho_1 \left(1 + \frac{\alpha+2}{\rho_2(\alpha +1)} \right) +
    \frac{(\alpha+1)^2}{\alpha} \left(1 +
      \frac{\alpha+2}{\rho_2(\alpha +1)} \right)^2 \frac{1}{T} \right)
  P_T f. \end{align*} Define $\beta := (\alpha +2)/(\alpha +1)$ to
obtain \eqref{LY}.\qed
\end{enumerate}
\end{proof}

Using \eqref{LY2} and the approach found in \cite[Remark 6.2 and Section
7]{BaGa12} and \cite{BBG14}, we obtain the following results.
\begin{corollary} Assume that $L$ satisfies \eqref{CD} relative to
  $\tensorv^*$ with $\rho_1 \geq 0, \rho_2 > 0$ and $n <
  \infty$. Write $\tensorg^* = \tensorh^* + \tensorv^*$. Also assume
  that \eqref{CondA} and \eqref{CondB} hold and that $L$ is symmetric
  with respect to the volume form $\vol$. Let $p_t(x, y)$ be the
  heat kernel of $\frac{1}{2} L$ with respect to $\vol$. Finally, let
  $N$ and $D$ be as in \eqref{Ndim}. Then the following holds.
  \begin{enumerate}[\rm(a)]
  \item $p_t(x,x) \leq t^{-N/2} p_1(x,x)$ for any $x \in M$.
  \item For any $0 < t_0 < t_1$ and any $f \in C^\infty_b(M) $
    non-negative, not identically zero,
    \begin{equation} \label{ParabolHarnack} P_{t_0} f(x) \leq
      (P_{t_1}f)(y) \left(\frac{t_1}{t_0}\right)^{N/2} \exp\left(D\,
        \frac{\metricd_{cc}(x,y)^2}{2  (t_1-t_0)}
         \right)\end{equation} where
    $\metricd_{cc}$ is the Carnot-Carath\'eodory distance. If
    $\tensorg^*$ is the co-metric of a complete Riemannian metric,
    then
$$p_{t_0} (x,y) \leq p_{t_1}(x, z) \left(\frac{t_1}{t_0}\right)^{N/2} \exp\left(D \,\frac{\metricd_{cc}(y,z)^2}{2 (t_1-t_0)}  \right).$$
\end{enumerate}
\end{corollary}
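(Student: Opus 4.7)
The master inequality from which both parts follow is a logarithmic reformulation of the Li--Yau estimate \eqref{LY2}. Using that $P_t$ commutes with $L$, so $\partial_t P_t f = \tfrac{1}{2} L P_t f = \tfrac{1}{2} P_t L f$, dividing \eqref{LY2} by $P_t f > 0$ yields
$$\partial_t \log P_t f \;\geq\; \frac{1}{2D}\,\sfGamma^{\tensorh^*}(\log P_t f) \;-\; \frac{N}{2t},$$
valid for strictly positive $f \in C_b^\infty(M)$ with $\sfGamma^{\tensorh^*+\tensorv^*}(f) \in C_b^\infty(M)$. This is the only analytic content needed for (a) and (b).

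For (a), I would discard the non-negative gradient term to get $\partial_t \log P_t f(x) \geq -N/(2t)$, and integrate over $s \in [t,1]$ to obtain $P_t f(x) \leq t^{-N/2} P_1 f(x)$ for $0 < t \leq 1$. Applying this to a bounded smooth approximation of $f = p_\varepsilon(x, \newbullet)$ gives $p_{t+\varepsilon}(x,x) \leq t^{-N/2}\, p_{1+\varepsilon}(x,x)$; letting $\varepsilon \downarrow 0$ and using joint continuity of the heat kernel (together with the symmetry $p_t(x,y) = p_t(y,x)$, which holds since $L$ is self-adjoint with respect to $\vol$) yields the diagonal bound.

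For (b), fix $0 < t_0 < t_1$ and a horizontal curve $\gamma \colon [t_0,t_1] \to M$ from $x$ to $y$, parametrized with constant $\tensorh$-speed so that $\int_{t_0}^{t_1} \|\dot\gamma(s)\|_\tensorh^2\,ds = \ell(\gamma)^2/(t_1 - t_0)$. Differentiating $\phi(s) := \log P_s f(\gamma(s))$ and combining the master inequality with Cauchy--Schwarz $|d(\log P_s f)(\dot\gamma(s))| \leq \sqrt{\sfGamma^{\tensorh^*}(\log P_s f)}\,\|\dot\gamma(s)\|_\tensorh$, the resulting quadratic in $\sqrt{\sfGamma^{\tensorh^*}(\log P_s f)}$ has minimum $-\tfrac{D}{2}\|\dot\gamma\|_\tensorh^2$, so
$$\phi'(s) \;\geq\; -\tfrac{D}{2}\,\|\dot\gamma(s)\|_\tensorh^2 \;-\; \frac{N}{2s}.$$
Integrating from $t_0$ to $t_1$ and taking the infimum over horizontal paths (so that $\ell(\gamma) \to \metricd_{cc}(x,y)$) gives the inequality for $P_{t_0} f(x)$ versus $P_{t_1}f(y)$ after exponentiation. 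The heat-kernel version follows by applying this Harnack inequality to approximations of $\delta_x$, with the roles of the two spatial slots exchanged using self-adjointness.

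The principal obstacle is justifying the insertion of heat-kernel approximations into estimates established only for $f \in C_b^\infty(M)$ with bounded gradient in $\tensorh^*+\tensorv^*$: the approach is to regularize $p_\varepsilon(w, \newbullet)$ by multiplying with a smooth cut-off $\chi_n$ adapted to $\metricd_{cc}$-balls (using completeness of $\metricd_{cc}$, or of $\tensorg^*$ for the off-diagonal kernel bound), apply Corollary~\ref{cor:EntropyLY}~(b), and pass to the limit using standard small-time heat-kernel estimates coming from hypoellipticity. A secondary care point in (b) is that $\metricd_{cc}(x,y)$ need not be realized by a smooth minimizing horizontal curve, so one argues with $\varepsilon$-almost minimizers and lets $\varepsilon \downarrow 0$ only after integration.
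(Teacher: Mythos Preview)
Your proposal is correct and follows precisely the standard Li--Yau--to--Harnack argument that the paper defers to; the paper itself gives no proof of this corollary but simply states that it follows from \eqref{LY2} together with the approach of \cite[Remark~6.2 and Section~7]{BaGa12} and \cite{BBG14}, which is exactly the integration-along-horizontal-curves method you outline.  Your identification of the two technical care points (regularizing $p_\varepsilon(x,\newbullet)$ and using $\varepsilon$-almost minimizing horizontal curves) is accurate, and your handling of the factor~$2$ coming from $\partial_t P_t f = \tfrac{1}{2} L P_t f$ is correct.
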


There are several more results which we can obtain when \eqref{CondA}
and \eqref{CondB} hold, along with the fact that $L$ satisfies
\eqref{CD} with $\rho_2 > 0$, which can be found in
\cite{BaGa12,BaBo12,BBG14}. We list some of the most important results
here, found in \cite[Theorem~10.1]{BaGa12} and \cite[Theorem~1.5]{BBG14}.

\begin{theorem} \label{th:others} Let $L$ be a second order operator satisfying \eqref{CD}
  with respect to $\tensorv^*$ and with $\rho_2 >0$. Assume that it is
  symmetric with respect to some volume form $\vol$. Define
  $\tensorg^* = \tensorh^* + \tensorv^*$ and assume that this is a
  complete Riemannian metric. Finally, assume that conditions
  \eqref{CondA} and \eqref{CondB} hold.
Let $B_r(x)$ be the ball of radius $r$ centered at $x \in M$ with
  respect to the metric~$\metricd_{cc}$.
  \begin{enumerate}[\rm(a)]
  \item \emph{(Sub-Riemannian Bonnet-Myers Theorem)} If $\rho_1 > 0$,
    then $M$ is compact.
  \item \emph{(Volume doubling property)} If $\rho_1 \geq 0$, there
    exist a constant $C$ such that
$$\vol(B_{2r}(x)) \leq C \vol(B_r(x)), \quad \text{for any } r \geq 0.$$
\item \emph{(Poincar\'e inequality on metric-balls)} If $\rho_1 \geq
  0$, there exist a constant $C$ such that
$$\int_{B_r(x)} \|f - f_{B_r}\|^2 \dvol \leq C r^2 \int_{B_r(x)} \sfGamma^{\tensorh^*} (f) d\vol,$$
for any $r\geq 0$ and $f \in C^1\left( \bar B_r(x)\right)$ 
where $\displaystyle f_{B_r} = \vol(B_r(x))^{-1} \int_{B_r(x)} f \dvol$.
\end{enumerate}
\end{theorem}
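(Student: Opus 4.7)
The plan is to deduce (a), (b), and (c) as applications of the Li-Yau inequality \eqref{LY2} and the parabolic Harnack inequality \eqref{ParabolHarnack} already in hand, following the scheme of \cite[Theorem~10.1]{BaGa12} and \cite[Theorem~1.5]{BBG14}. All of the standing hypotheses required there are available: \eqref{CondA}, \eqref{CondB}, symmetry of $L$ with respect to $\vol$, completeness of the Riemannian metric $\tensorg^*$ (and hence of $\metricd_{cc}$ by the remark before Proposition~\ref{prop:Poincare}), and the curvature-dimension inequality \eqref{CD} with $\rho_2 > 0$.

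For (a), I would first upgrade \eqref{LY2} by retaining the $\rho_1>0$ contribution inside the optimization in Lemma~\ref{lemma:ABLambda}. A choice of $a(t),b(t),\ell(t)$ analogous to the proof of Corollary~\ref{cor:EntropyLY}(b) but with $a(t)$ replaced by $\rho_1^{-1}(1-e^{-\rho_1(T-t)})^{\alpha+1}$-type factors produces a variant of \eqref{LY2} in which $N/t$ is replaced by an expression of order $N'/(1-e^{-\rho_1 t})$ that remains bounded as $t \to \infty$. Integrating this inequality along a minimizing horizontal curve joining $x$ to $y$ yields a Harnack inequality of the form \eqref{ParabolHarnack} valid as $t_1 \to \infty$, from which one extracts the uniform bound $\metricd_{cc}(x,y) \leq C(\rho_1,\rho_2,n)$ upon inserting a non-negative $f \in C_b^\infty(M)$ whose mass at time $t_1$ concentrates far from $x$. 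Since $\metricd_{cc}$ is complete and of finite diameter, $M$ is compact.

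For (b), from \eqref{ParabolHarnack} one extracts matching Gaussian-type upper and lower bounds on $p_t(x,y)$ in the standard way: the upper bound comes from applying \eqref{ParabolHarnack} with $t_0 = t/2, t_1 = t, y=z=x$ combined with the reproducing identity $p_t(x,x) = \int p_{t/2}(x,y)^2\dvol(y)$, and the lower bound from applying \eqref{ParabolHarnack} to $p_{t/2}(\cdot,x)$ with the roles of the two time arguments flipped. Integrating these bounds against $\int p_t(x,y)\dvol(y) = 1$ (from $P_t 1 = 1$ in \eqref{CondA}) produces matching estimates $\vol(B_{\sqrt t}(x)) \asymp p_t(x,x)^{-1}$; combining with the preceding corollary's part (a) to compare $p_t(x,x)$ and $p_{4t}(x,x)$ gives the doubling constant, uniform in $x$. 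For (c), once volume doubling and the scale-invariant parabolic Harnack inequality \eqref{ParabolHarnack} are both available on our Dirichlet space, Jerison's theorem (equivalently Saloff--Coste's characterization of parabolic Harnack) produces the scale-invariant $L^2$-Poincar\'e inequality on balls; this is precisely the content of \cite[Theorem~1.5]{BBG14} applied to our setting.

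The principal technical obstacle is (a). Retaining $\rho_1 > 0$ inside the Li-Yau optimization requires an ansatz distinct from the one used for \eqref{LY2}, and the subsequent integration of the pointwise inequality along horizontal curves must be carried out carefully in the presence of the sub-Riemannian cut locus, which can be larger and more irregular than in the Riemannian setting. Parts (b) and (c) are by contrast soft measure-theoretic consequences of the heat-kernel estimates; the only quantitative subtlety there is uniformity of the doubling constant in the base point, which follows cleanly from the pointwise nature of \eqref{ParabolHarnack}.
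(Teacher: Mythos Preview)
The paper does not actually prove this theorem: immediately before stating it, the authors write that the results ``can be found in \cite{BaGa12,BaBo12,BBG14}'' and that they ``list some of the most important results here, found in \cite[Theorem~10.1]{BaGa12} and \cite[Theorem~1.5]{BBG14}.'' No proof is given in the paper itself; the theorem is quoted from the cited literature once \eqref{CD}, \eqref{CondA}, and \eqref{CondB} have been verified.

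Your sketch is in the spirit of those references, and for (b) and (c) it is essentially the argument of \cite{BBG14}: Gaussian two-sided heat-kernel bounds from the parabolic Harnack inequality, volume comparison $\vol(B_{\sqrt{t}}(x)) \asymp p_t(x,x)^{-1}$, and then the Saloff-Coste/Jerison equivalence to get the local Poincar\'e inequality. For (a), however, your outline is vague at the crucial step. In \cite[Section~10]{BaGa12} the Bonnet--Myers theorem is not obtained by integrating a Li--Yau inequality along geodesics; it is obtained from the entropy/reverse log-Sobolev estimate (the analogue of Corollary~\ref{cor:EntropyLY}(a) here), which for $\rho_1>0$ yields a dimension-free Wang-type Harnack inequality and from there a uniform diameter bound. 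Your proposed route---keeping $\rho_1>0$ inside the Li--Yau optimization and then ``extracting'' a diameter bound from \eqref{ParabolHarnack} as $t_1\to\infty$---is not how this is done, and the step where you claim to read off $\metricd_{cc}(x,y)\le C$ from a Harnack inequality by ``inserting an $f$ whose mass concentrates far from $x$'' is not a valid argument as written.
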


\section{Examples and comments} \label{sec:Comments}
\subsection{Results in the case of totally geodesic Riemannian foliations} \label{sec:sRSpecial}
Let us consider the following case. Let $(M, \tensorg)$ be a
Riemannian manifold, and let $\calH$ be a subbundle that is bracket
generating of step 2, i.e. the tangent bundle is spanned by the
sections of $\calH$ and their first order brackets. Let $\calV$
be the orthogonal complement of $\calH$ with respect to $\tensorg$.
Define $\rnabla$ with respect to the decomposition $TM = \calH \oplus
\calV$ and let $\tensorh$ and $\tensorv$ be the respective restrictions
of $\tensorg$ to $\calH$ and $\calV$. Let us make the following
assumptions:
\begin{enumerate}[\ -]
\item $\calV$ is integrable, $\tensorg_{\mathstrut}$ is complete, $\rnabla \tensorg = 0$
and the assumptions (i)--(iv) of Section~\ref{sec:Summary} hold with $\mcalR > 0$.
\end{enumerate}
From our investigations so far, we then know that
\begin{enumerate}[\ -]
\item $\calV$ is a metric-preserving complement of $(M, \calH,
  \tensorh)$; the foliation of $\calV$ is a totally geodesic Riemannian foliation.
\item the sub-Laplacian $\srL$ of $\calV$ is symmetric with respect to
  the volume form $\vol$~of~$\tensorg$;
\item $\srL$ is essentially self-adjoint on $C_c^\infty(M)$;
\item $\srL$ satisfies \eqref{CD} with respect to $\tensorv^*$;
\item both \eqref{CondA} and \eqref{CondB} hold.
\end{enumerate}
We list the results that can be deduced on such manifolds using
the approach of the generalized curvature-dimension inequality. We
will split the results up into two propositions.

\begin{proposition} \label{prop:pzcknn} Define $\kappa = \frac{1}{2}
  \mcalRvar^2 \rRicH - \MRicHV^2$ and assume that $\kappa
  \geq 0$. Let $f \in C_b^\infty(M)$ be non-negative, not identically zero. Define
  $$N = \frac{n}{4}
\frac{ \left(\sqrt{2 \rRicH + \kappa} + \sqrt{\rRicH+ \kappa   }\right)^2}{\kappa} , \qquad D = \frac{\sqrt{(\kappa+ \rRicH)(\kappa  + 2\rRicH)}}{ \kappa}.$$
  \begin{enumerate}[\rm(a)]
  \item Assume that $\sfGamma^{\tensorh^* + \tensorv^*}(f) \in C^\infty_b(M)$. Then for any $1< \beta < 2$, we have
    \begin{align*} \frac{ \sfGamma^{\tensorh^* }(P_t f)}{(P_t f)^2} -
      \left(1 + \frac{\rRicH}{2\kappa} \beta \right) \frac{P_t L f}{P_t
        f} \leq \frac{n}{4 t} \left( \frac{\big(1 + \frac{\rRicH}{2
              \kappa} \beta \big)^2}{(2 -\beta)(\beta-1)}
      \right). \end{align*}
  \item Let $p_t(x, y)$ be the heat kernel of $\frac{1}{2} \srL$
    with respect to $\vol$. Then
$$p_t(x,x) \leq \frac{1}{t^{N/2}} p_1(x,x)$$
for any $x \in M$ and $0 \leq t \leq 1$. Furthermore, for any $0 < t_0 < t_1$,
$$P_{t_0} f(x) \leq (P_{t_1}f)(y) \left(\frac{t_1}{t_0}\right)^{N/2} \exp\left(D\,\frac{\metricd_{\mathrm{cc}}(x,y)^2}{2 (t_1-t_0)} \right).$$
\end{enumerate}
In both results, if $\kappa =0$, we interpret the quotient
${\kappa}/{\rRicH}$ as $\frac{1}{2} \mcalRvar^2$.
\end{proposition}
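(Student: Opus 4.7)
The plan is to apply Corollary~\ref{cor:EntropyLY} and the corollary immediately following it (the one giving the on-diagonal heat-kernel bound and the parabolic Harnack inequality) to the present setting. All prerequisites are already in place by the bulleted list preceding the proposition: $\srL$ is symmetric with respect to $\vol$, satisfies \eqref{CD} with constants supplied by \eqref{rhoSR2}, conditions \eqref{CondA} and \eqref{CondB} both hold, and $\tensorg^* = \tensorh^* + \tensorv^*$ is the co-metric of a complete Riemannian metric. The only remaining freedom is the parameter $c > 0$ in \eqref{rhoSR2}, and the proposition amounts to picking this parameter and rewriting the conclusions in terms of $\rRicH$, $\MRicHV$ and $\mcalR$.

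The decisive step is to choose $c = 1/\rRicH$ in \eqref{rhoSR2}, which yields
$$\rho_1 = 0, \qquad \rho_2 = \tfrac{1}{2}\mcalR^2 - \frac{\MRicHV^2}{\rRicH} = \frac{\kappa}{\rRicH}.$$
Since $\mcalR > 0$ and $\kappa \geq 0$, one has $\rRicH \geq 0$, with $\rRicH = 0$ only in the degenerate sub-case $\MRicHV = 0 = \kappa$; in that sub-case one instead sends $c \to \infty$, keeping $\rho_1 = 0$ and $\rho_2 = \tfrac{1}{2}\mcalR^2$, which is exactly what the final sentence of the statement prescribes. A short algebraic check, pulling a factor of $1/\sqrt{\rRicH}$ out of each square root, shows that inserting $\rho_2 = \kappa/\rRicH$ into the general formulas \eqref{Ndim} reproduces the $N$ and $D$ written in the proposition.

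Part~(a) then follows by substituting $\rho_1 = 0$ and $\rho_2 = \kappa/\rRicH$ into the Li-Yau inequality \eqref{LY} of Corollary~\ref{cor:EntropyLY}~(b): the correction terms linear in $\rho_1 t$ vanish, and the coefficient $a_\beta = (\rho_2 + \beta)/\rho_2$ rewrites in terms of $\kappa$ and $\rRicH$ in the form claimed. Part~(b) is an immediate application of the corollary following Corollary~\ref{cor:EntropyLY} with the same $\rho_1, \rho_2$ and the identified $N$, $D$; no further analysis is required.

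I do not anticipate any real difficulty: all the analytic and probabilistic content has already been distilled in Sections~\ref{sec:GradientBound} and~\ref{sec:GeodesicFibers}, so what remains is purely the algebraic bookkeeping of re-expressing $\rho_1$ and $\rho_2$ via $\rRicH$, $\MRicHV$, $\mcalR$, together with the limiting interpretation in the boundary case $\kappa = 0$. If there is any subtlety it is ensuring uniformity of the estimates across this limit, but this is handled cleanly by the convention stated in the final sentence of the proposition.
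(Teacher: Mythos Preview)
Your proposal is correct and follows exactly the paper's approach: choose $c = 1/\rRicH$ in \eqref{rhoSR2} (or let $c \to \infty$ when $\rRicH = 0$, noting that $\kappa \geq 0$ then forces $\MRicHV = 0$) to obtain $\rho_1 = 0$ and $\rho_2 = \kappa/\rRicH$, and then apply Corollary~\ref{cor:EntropyLY}(b) and the subsequent corollary on the heat kernel and Harnack inequality. The algebraic identification of $N$ and $D$ you describe is precisely how the paper arrives at the displayed constants.
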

Note that if~$\MRicHV = 0$, the constant in the above result
is independent of~$\rRicH$.

\begin{proof}
  From the formulas \eqref{rhoSR2}, we know that $\srL$ satisfies
  \eqref{CD} with $\rho_2 > 0$ and $\rho_1 \geq 0$. In particular, we
  can choose $c ={1}/{\rRicH}$ if $\rRicH > 0$ and $\infty$ if
  $\rRicH =0$. This choice gives us $\rho_1 = 0$, while maximizing
  $\rho_2$. Note that if $\rRicH =0$, then $\MRicHV$ must be $0$ as well, since
  we have required $\kappa \geq 0$.\qed
\end{proof}

\begin{example}[Free nilpotent Lie algebra of step 2]
Let $\frakh$ be a vector space of dimension $n$ with an inner product $\langle \newbullet, \newbullet \rangle$ and let $\frakk$ denote the vector space $\bigwedge^2 \frakh$. Define a Lie algebra $\frakg$ as the vector space $\frakh \oplus \frakk$ with Lie brackets determined by $\frakk$ being the center and for any $A,B \in \frakh$, we have
$$[A, B] = A \wedge B \in \frakk,.$$
This is clearly a nilpotent Lie algebra of step 2 and dimension~${n(n+1)}/{2}$.

Let $G$ be a simply connected nilpotent Lie group with Lie algebra $\frakg$. Define a sub-Riemannian structure $(\calH, \tensorh)$ by left translation of $\frakh$ and its inner product. Let $A_1, \dots, A_n$ be a left
  invariant orthonormal basis of $\calH$ and define $L = \sum_{i=1}^n
  A_i^2$. From Part~I, Example~4.4, we know that $L$ satisfies
  \eqref{CD} with respect to some $\tensorv^*$, $n = \rank \frakh$, $\rho_1 = 0$ and
  $\rho_2 = \frac{1}{2(n-1)}$. This choice of $\tensorv^*$ also gives us a complete Riemannian metric~$\tensorg$ satisfying $\rnabla \tensorg =0$ and with $L$ being the sub-Laplacian of the volume form of $\tensorg$. 
We then obtain that for any $0 < t_0 < t_1$ and $f \in C_b^\infty(M)$
$$P_{t_0} f(x) \leq (P_{t_1}f)(y) \left(\frac{t_1}{t_0}\right)^{N/2} \exp\left(D\,\frac{\metricd_{\mathrm{cc}}(x,y)^2 }{2 (t_1-t_0)}  \right)$$
where $N = \frac{n}{4} \left(\sqrt{4 n - 3 } + \sqrt{2 n- 1
  }\right)^2$ and $D = \sqrt{(2n-1)(4n-3)}$.
\end{example}\goodbreak

\begin{proposition} \label{prop:pzckp} Define $\kappa = \frac{1}{2}
  \mcalRvar^2 \rRicH - \MRicHV^2$ and assume that $\kappa >
  0$. Then the following statements hold.
  \begin{enumerate}[\rm(a)]
  \item $M$ is compact.
  \item If $f \in C^\infty(M)$ is an arbitrary function and $$\displaystyle\alpha := \left( \frac{2 \kappa}{2
        \MRicHV+\mcalR \sqrt{2 \rRicH + 2\kappa }} \right)^2,$$ we have
    \begin{align*}
      \|\sfGamma^{\tensorh^*}(P_t f)\|_{L^1}  \leq  e^{-\alpha t} \|\sfGamma^{\tensorh^*}(f) \|_{L^1}, \quad
 \text{and}   \quad  \| f- f_M \|_{L^2}^2 \leq \frac{1}{\alpha} \int_M
      \sfGamma^{\tensorh^*}(f) \dvol
    \end{align*}
    where $f_M = {\vol(M)}^{-1} \int_M f \dvol$.
  \item Let $f \in C^\infty(M)$ be an arbitrary function. Then
    \begin{align*} t \sfGamma^{ \tensorh^*}(P_t f) \leq
      \left(1 + \frac{2 \rRicH}{\kappa} \right) (P_t f^2 - (P_t
      f)^2).\end{align*}
  \item Let $f$ be a strictly positive smooth function. Then for any $x \in M$,
    \begin{align*} t \sfGamma^{ \tensorh^*}(\log P_t f)(x) \leq
      2\left(1 + \frac{2\rRicH}{\kappa} \right) P_t \left( \frac{f}{f(x)}
        \log \frac{f}{f (x)} \right)(x).
    \end{align*}
  \end{enumerate}
\end{proposition}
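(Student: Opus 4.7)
The plan is to specialize the general results of Sections~\ref{sec:GradientBound}--\ref{sec:GeodesicFibers} to the present setting. As recalled at the start of Section~\ref{sec:sRSpecial}, under the standing hypotheses the sub-Laplacian $\srL$ is essentially self-adjoint on $C_c^\infty(M)$, symmetric with respect to $\vol$, satisfies \eqref{CD} with the invariants given by \eqref{rhoSR2}, and satisfies \eqref{CondA} and \eqref{CondB}. Different parts of the proposition will be obtained by tuning the free parameter $c>0$ in \eqref{rhoSR2}.

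The hypothesis $\kappa>0$ is equivalent to the interval $I:=\bigl(1/\rRicH,\;\mcalR^2/(2\MRicHV^2)\bigr)$ being non-empty (with the convention that the right endpoint is $+\infty$ when $\MRicHV=0$), and for every $c\in I$ one has $\rho_1>0$ \emph{and} $\rho_2>0$ in \eqref{rhoSR2}. The sub-Riemannian Bonnet--Myers theorem (Theorem~\ref{th:others}(a)) applied for any such $c$ yields compactness of $M$, which gives (a). Thereafter every $f\in C^\infty(M)$ is bounded with bounded gradient, so the hypotheses of Proposition~\ref{prop:GradBound} and Corollary~\ref{cor:EntropyLY} apply to any such~$f$.

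For (c) and (d), I would take the boundary value $c=1/\rRicH$, which gives $\rho_1=0$ and $\rho_2=\tfrac12\mcalR^2-\MRicHV^2/\rRicH=\kappa/\rRicH$, so that $1+2/\rho_2=1+2\rRicH/\kappa$. Inserting this into Proposition~\ref{prop:GradBound}(c) (the $\rho_1=0$ branch, where $(1-e^{-\rho_1 t})/\rho_1$ is interpreted as $t$) yields (c). Inserting the same values into Corollary~\ref{cor:EntropyLY}(a) and multiplying through by $2$ yields (d).

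Part (b) is the delicate one. For any $c\in[1/\rRicH,\;\mcalR^2/(2\MRicHV^2))$ we have $\rho_1\geq 0=\rho_{2,1}$ and $\rho_{2,0}=\rho_2>0>-1$, so Proposition~\ref{prop:Poincare}(b) applies with rate $\alpha(c)=\rho_1(c)\rho_2(c)/(\rho_2(c)+1)$, and Proposition~\ref{prop:Poincare}(c) then produces the $L^2$ Poincar\'e inequality (the completeness and finite-volume hypotheses being supplied, respectively, by the assumed completeness of $\tensorg$ and by compactness from (a)). The remaining task is to optimize $\alpha(c)$ over the admissible interval: substituting $\rho_1=\rRicH-1/c$ and $\rho_2=\tfrac12\mcalR^2-c\MRicHV^2$ and setting the derivative to zero, the first-order condition reduces — after using the identity $\tfrac12\mcalR^2\rRicH-\kappa=\MRicHV^2$ — to a quadratic in $1/c$ whose discriminant factors cleanly, and the appropriate root returns exactly the value of $\alpha$ stated in the proposition. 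The main (and only) obstacle I anticipate is carrying out this optimization without losing the announced closed form; everything else is a direct specialization.
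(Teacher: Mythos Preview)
Your proposal is correct and follows essentially the same route as the paper: compactness from Theorem~\ref{th:others}(a) with some $c$ making both $\rho_1,\rho_2>0$; parts~(c) and~(d) from Proposition~\ref{prop:GradBound}(c) and Corollary~\ref{cor:EntropyLY}(a) with the boundary choice $c=1/\rRicH$; and part~(b) from Proposition~\ref{prop:Poincare}(b),(c) after maximizing $\rho_1\rho_2/(\rho_2+1)$ over~$c$. The paper's proof is exactly this, only stated more tersely.
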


\begin{proof}
  From the formulas \eqref{rhoSR2}, we know that $\srL$ satisfies
  \eqref{CD} with $\rho_2 > 0$ and $\rho_1 > 0$.
  \begin{enumerate}[(a)]
  \item Follows directly from Theorem~\eqref{th:others}~(a).
  \item We use Propositions~\ref{prop:GradBound}~(a) and
    \ref{prop:Poincare}~(b). With our assumption of
    $\rnabla \tensorv =0$, the formulas \eqref{rhoSR} show that we
    can choose $\rho_{2,1} =0$ and both $\rho_1$ and $\rho_2$ strictly
    positive, since $\kappa >0$. The result follows by maximizing
    $\frac{\rho_{2} \rho_1}{\rho_2 + 1}$ with respect to $c$.
  \item We use Proposition~\ref{prop:GradBound}~(c) and using \eqref{rhoSR2} with $c = {1}/{\rRicH}$.
\item Similar to the proof of (c), only using
  Corollary~\ref{cor:EntropyLY}~(a) instead.\qed
\end{enumerate}
\end{proof}

\begin{example}
Let $\frakg$ be a compact semisimple Lie algebra with bi-invariant metric
$$\langle A,B \rangle = - \frac{1}{4\rho} \tr\, \ad(A)\ad(B), \quad \rho >0.$$
Let $G$ be a (compact) Lie group with Lie algebra $G$ and with metric $\widecheck{\tensorg}$ given by left (or right) translation of the above inner product. Then $\rho >0$ is the lower Ricci bound of $G$.

Let $\frakh$ be the subspace of the Lie algebra $\frakg \oplus \frakg$ consisting of elements on the form $(A, 2A)$, $A \in \frakg$. Define the subbundle $\calH$ on $G \times G$ by left translation of $\frakh$. If we use the same symbol for an element in the Lie algebra and the corresponding left invariant vector field, we define a metric $\tensorh$ on $\calH$ by
$$\tensorh((A,2A), (A,2A) ) = \langle A, A \rangle.$$
Define $\pi: G \times G \to G$ as projection on the second coordinate with vertical bundle $\calV = \ker \pi_*$ and give this bundle a metric $\tensorv$ determined by
$$\|(A,0)\|_{\tensorv}^2 = \frac{1}{4\rho} \langle A, A \rangle.$$
If we then define $\rnabla$ relative to $\calH \oplus \calV$ and $\tensorg =\pr_{\calH}^* \tensorh + \pr_{\calV}^* \tensorv$, then $\rnabla \tensorg = 0$. Let $\srL$ be the sub-Laplacian with respect to
  $\calV$, which coincides with the sub-Laplacian of the volume form
  $\vol$ of $\tensorg$. We showed in
  Part~I, Example~4.6 that this satisfies \eqref{CD} with respect to
  $\tensorv^*$, $n = \dim G$, $\rho_1 = \rRicH = 4\rho$ and $\rho_2 = \frac{1}{2} \mcalRvar^2 = {1}/{4}$.

 We then have that for any $f \in C^\infty(G \times G)$,
$$\|f - f_{G \times G}\|^2_{L^2} = \frac{5}{4 \rho} \int_M \sfGamma^{\tensorh^*}(f) \dvol$$
where $f_{G \times G} = {\vol(G \times G)}^{-1} \int_{G \times G} f \dvol.$
\end{example}

\subsection{Comparison to Riemannian Ricci curvature} 
Let us consider a sub-Riemannian manifold such as in
Section~\ref{sec:sRSpecial}. Given the results of
Proposition~\ref{prop:pzcknn} and Proposition~\ref{prop:pzckp}, it
seems reasonable to consider sub-Riemannian manifolds with $\kappa \geq
0$ or $\kappa > 0$ as the analogue of Riemannian manifolds with
respectively non-negative and positive Ricci curvature. However, given
the extra structure in the choice of $\tensorv$ on $\calV$, it is
natural to ask how these sub-Riemannian results compare to the Ricci
curvature of the metric $\tensorg = \pr_{\calH}^* \tensorh +
\pr_{\calV}^* \tensorv$. We give the comparison here.

Introduce the following symmetric $2$-tensor
\begin{align*}
  \RicV(Y,Z) &= \tr \left(V \mapsto \pr_{\calV} R^{\rnabla}(V, Y) Z
  \right). \end{align*} Then the Ricci curvature of $\tensorg$ can be
written in the following way.
\begin{proposition} \label{prop:RiemannRicci} The Ricci curvature
  $\Ric_{\tensorg}$ of $\tensorg$ satisfies
  \begin{align} \label{RiemannRicci} \Ric_{\tensorg}(Y,Y) & =
    \RicH(Y,Y) + \RicHV(Y,Y) + \frac{1}{2} \|\tensorg(Y,
    \calR(\newbullet, \newbullet) )\|^2_{\wedge^2 \tensorg^*} \\
    \nonumber &\quad + \RicV(Y, Y) - \frac{3}{4} \|\calR(Y,\newbullet)\|^2_{\tensorg^* \otimes
      \tensorg}.
  \end{align}
\end{proposition}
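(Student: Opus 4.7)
The formula is a curvature comparison between the Levi-Civita connection $\nabla$ of $\tensorg$ and the adapted connection $\rnabla$ from \eqref{rnabla}, so my strategy is to compute the difference tensor $\Psi := \nabla - \rnabla$, plug it into the standard curvature identity, and trace.

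\textbf{Step 1: Identify $\Psi$.} From the definition \eqref{rnabla} and integrability of $\calV$, a direct computation gives $T^{\rnabla}(X,Y) = -\calR(X,Y)$. Since both $\nabla$ and $\rnabla$ are $\tensorg$-compatible, $\tensorg(\Psi(X,Y),Z)$ is skew in $(Y,Z)$; together with $\Psi(X,Y)-\Psi(Y,X)=\calR(X,Y)$, the standard Koszul-type manipulation yields
\begin{equation*}
2\tensorg(\Psi(X,Y),Z) = -\tensorg(\calR(X,Y),Z) + \tensorg(\calR(Y,Z),X) - \tensorg(\calR(Z,X),Y).
\end{equation*}
Because $\calR$ has only horizontal inputs and vertical values, this collapses into block form: $\Psi(A,A') = -\tfrac12 \calR(A,A')\in \calV$ for $A,A'\in \calH$; $\Psi(V,V')=0$ for $V,V'\in \calV$; and $\Psi(A,V)=\Psi(V,A)=\tfrac12 J_V A \in \calH$, where $J_V\colon \calH\to \calH$ is the skew endomorphism characterized by $\tensorg(J_V A, A') = \tensorg(\calR(A,A'),V)$.

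\textbf{Step 2: Compare curvatures.} Because $\rnabla$ has nontrivial torsion $-\calR$, expanding $R^{\nabla}(X,Y)Z = \nabla_X\nabla_Y Z - \nabla_Y \nabla_X Z - \nabla_{[X,Y]}Z$ with $\nabla=\rnabla+\Psi$ gives
\begin{align*}
R^{\nabla}(X,Y)Z = R^{\rnabla}(X,Y)Z &+ (\rnabla_X \Psi)(Y,Z) - (\rnabla_Y \Psi)(X,Z) \\
&+ \Psi(X,\Psi(Y,Z)) - \Psi(Y,\Psi(X,Z)) + \Psi(\calR(X,Y),Z).
\end{align*}

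\textbf{Step 3: Trace.} Pick an orthonormal frame $\{A_i\}\cup\{V_k\}$ adapted to $\calH\oplus\calV$ and sum $\tensorg(R^{\nabla}(E,Y)Y, E)$ over the frame. The $R^{\rnabla}$ trace splits immediately as $\RicH(Y,Y) + \RicV(Y,Y)$ by the very definitions given in Section~\ref{sec:Summary}. The two $\rnabla \Psi$ terms (which are linear in first derivatives of $\calR$) combine, after using the first Bianchi identity for $\rnabla$ and the symmetries of $\Psi$ encoded in Step 1, to produce exactly the expression $\RicHV(Y,Y)$ as defined in Section~\ref{sec:Summary}~(iv) (this is where the symmetrization in the definition of $\RicHV$ matches the trace). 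The algebraic pieces $\Psi\circ\Psi$ and $\Psi(\calR(\cdot,Y),Y)$ are pure curvature-of-$\calR$ terms; substituting the block formulas from Step~1 and expanding $Y=\pr_\calH Y + \pr_\calV Y$ gives two contributions, one involving $\tensorg(Y,\calR(\cdot,\cdot))$ (the $J_VY$-type terms) and one involving $\calR(Y,\cdot)$ (the $\tfrac12\calR$ composition), with the coefficients~$+\tfrac12$ and $-\tfrac34$ emerging from the standard $1-\tfrac14$ and $\tfrac14-\tfrac12-\tfrac12$ arithmetic characteristic of O'Neill-type identities.

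\textbf{Main obstacle.} Steps 1 and 2 are mechanical, but the third is delicate bookkeeping: one must decompose $Y$ into horizontal and vertical parts, carefully sort which block of $\Psi$ is hit at each of its two occurrences in $\Psi(X,\Psi(Y,Z))$ and in $\Psi(\calR(X,Y),Z)$, and verify that the \emph{same} three quadratic invariants (the norms of $\tensorg(Y,\calR(\cdot,\cdot))$ and $\calR(Y,\cdot)$) appear with the stated coefficients $+\tfrac12$ and $-\tfrac34$ regardless of whether $Y$ is horizontal, vertical, or mixed. A useful sanity check is the Riemannian-submersion special case: for $Y$ purely horizontal the identity must reduce to the classical O'Neill formula $\Ric_{\tensorg}(Y,Y) = \RicH(Y,Y) - \tfrac34\|\calR(Y,\newbullet)\|^2$, and for $Y$ purely vertical to $\Ric_{\tensorg}(Y,Y) = \RicV(Y,Y) + \tfrac12\|\tensorg(Y,\calR(\newbullet,\newbullet))\|^2$, which fixes the signs and coefficients unambiguously.
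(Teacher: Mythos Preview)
Your strategy is exactly the paper's: define the difference tensor $\calB = \nabla - \rnabla$ (your $\Psi$), expand $R^{\nabla}$ in terms of $R^{\rnabla}$ and $\calB$, and trace over an adapted orthonormal frame; the paper even writes the same block formula for $\calB$ and obtains the horizontal trace contribution $\RicHV(Z,Z) - \tfrac34\|\calR(Z,\newbullet)\|^2 + \tfrac12\|\tensorg(Z,\calR(\newbullet,\newbullet))\|^2$ directly, with zero vertical correction.

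Two sign slips to fix in a clean write-up: from $\Psi(X,Y)-\Psi(Y,X)=\calR(X,Y)$ and metric compatibility one gets
\[
2\tensorg(\Psi(X,Y),Z) = +\tensorg(\calR(X,Y),Z) - \tensorg(\calR(Y,Z),X) + \tensorg(\calR(Z,X),Y),
\]
so in fact $\Psi(A,A') = +\tfrac12\calR(A,A')$ and $\Psi(A,V)=\Psi(V,A)=-\tfrac12 J_V A$; and in your curvature expansion the torsion correction should be $-\Psi(\calR(X,Y),Z)$, not $+$. Neither affects the method, and the invocation of a first Bianchi identity in Step~3 is unnecessary: the $\rnabla\Psi$ trace collapses to $\RicHV$ by direct inspection once $\Psi$ is written out.
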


Before we get to the proof, let us note the consequences of this
result. If $\kappa = \frac{1}{2} \rRicH \mcalRvar^2 - \MRicHV^2 $
is respectively non-negative or positive, this ensures that the first
line of \eqref{RiemannRicci} has respectively a non-negative or
positive lower bound. Furthermore, note that this part is independent of any covariant derivative of vertical vector fields.

\begin{proof}[of Proposition~\upshape\ref{prop:RiemannRicci}]
  Let $\nabla$ be the Levi-Civita connection of $\tensorg$. Define a
  two tensor $\calB(A,Z) = \nabla_A Z - \rnabla_A Z$.  Then it is clear
  that
  \begin{align*} R^{\nabla}(A,Y)Z &=  R^{\rnabla}(A,Y) Z + (\rnabla_A \calB)(Y,Z) - (\rnabla_Y \calB)(A,Z)  \\
    &\quad + \calB(\calB(Y,A), Z) + \calB(A, \calB(Y,Z)) - \calB(\calB(A,Y),Z) - \calB(Y,
    \calB(A,Z)).\end{align*} 
Furthermore, it is simple to verify that
  \begin{align*} \calB(A,Z) =& \frac{1}{2} \calR(A,Z) - \frac{1}{2} \sharp \tensorg(A, \calR(Z,\newbullet)) - \frac{1}{2} \sharp \tensorg(Z, \calR(A,\newbullet)) .
  \end{align*}
 Let $A_1, \dots, A_n$ and $V_1, \dots, V_\nu$ be local orthonormal
  bases of respectively $\calH$ and~$\calV$. Then
  \begin{align*} & \sum_{i=1}^n \tensorg(A_i, R^{\nabla}(A_i,Z)Z - R^{\rnabla}(A_i,Z) Z) \\
    &\quad = \sum_{i=1}^n \tensorg(Z, (\rnabla_{A_i}
    \calR)(A_i,Z)) - \frac{3}{4} \|\calR(Z, \newbullet) \|^2_{\tensorg^*
      \otimes \tensorg} + \frac{1}{2} \|\tensorg(Z,
    \calR(\newbullet, \newbullet))\|^2_{\wedge^2 \tensorg^*}.
  \end{align*}
  Similarly, $\sum_{s=1}^\nu
  \tensorg(R^{\nabla}(V_s,Z)Z - R^{\rnabla}(V_s,Z) Z, V_s) = 0$.\qed
\end{proof}

\subsection{Generalizations to equiregular submanifolds of steps greater than two}
Many of the results in Section~\ref{sec:GradientBound} and Section
\ref{sec:GeodesicFibers} depend on the condition $\rho_2 >0$. A
necessary condition for this to hold is that our sub-Riemannian
manifold is bracket-generating of step 2. Let us note some of the
difficulties in generalizing the approach of this paper to
sub-Riemannian manifolds $(M, \calH, \tensorh)$ of higher steps.

As usual, we require $\calH$ bracket-generating. Assume also, for the
sake of simplicity, that $\calH$ is equiregular, i.e.  there exists a flag
of subbundles $\calH= \calH^1 \subseteq \calH^2 \subseteq \calH^3
\subseteq \cdots \subseteq \calH^r$ such that
$$\calH^{k+1}_x = \spn \left\{ Z|_x, [A,Z] |_x \, \colon \, Z \in \Gamma(\calH^{k}),\ A \in \Gamma(\calH) \right\}, \quad x \in M.$$
Choose a metric tensor $\tensorv$ on $\calV$ and let $\tensorg =
\pr_{\calH}^* \tensorh + \pr_{\calV}^* \tensorv$ be the corresponding
Riemannian metric. Let $\calV_k$ be the orthogonal complement of
$\calH^k$ in $\calH^{k+1}$. Let $\pr_{\calV_k}$ be the projection to
$\calV_k$ relative to the splitting $\calH \oplus \calV_1 \oplus
\cdots \oplus \calV_{r-1}$. Define $\tensorv_k = \tensorv|_{\calV_k}$
and let $\tensorv^*_k$ be the corresponding co-metric. We could
attempt to construct a curvature-dimension inequality with
$\sfGamma^{\tensorh^*}(f),\, \sfGamma^{\tensorv_1^*}(f), \dots,
\sfGamma^{\tensorv_{r-1}^*}(f)$. However, a condition similar to
\eqref{CondB} could never hold in this case,
i.e. $\sfGamma^{\tensorh^*}(f, \sfGamma^{\tensorv_k^*}(f)) =
\sfGamma^{\tensorv^*_k}(f, \sfGamma^{\tensorh^*}(f))$ cannot hold for
any $k \leq r-2.$

To see this let $\alpha$ and $\beta$ be forms that only are
non-vanishing on respectively $\calV_{k}$ and $\calV_{k+1}$ for $k
\leq r-2$. Then $\sfGamma^{\tensorh^*}(f, \sfGamma^{\tensorv_k^*}(f))
= \sfGamma^{\tensorv^*_k}(f, \sfGamma^{\tensorh^*}(f))$ holds if and
only if $\rnabla \tensorh^* = 0$ and $\rnabla_A \tensorv_k^* =0$ for
any $A \in \Gamma(\calH)$. Hence we obtain
$$0 = (\rnabla_{A} \tensorv_k^*)(\alpha, \beta) = \beta([A, \shvk \alpha]).$$
However, this is a contradiction, since by our construction,
$\calV_{k+1}$ must be spanned by orthogonal projections of brackets on
the form $[A, Z ]$, $A \in \Gamma(\calH)$, $Z \in \Gamma(\calV_{k})$.

\appendix
\section{Graded analysis on forms}  \label{sec:Appendic}
\subsection{Graded analysis on forms} \label{sec:GradedForms} Let $(M,
\calH, \tensorh)$ be a sub-Riemannian manifold with an integrable
complement $\calV$, and let $\tensorv$ be a chosen positive definite
metric tensor on $\calV$. Let $\tensorg = \pr_{\calH}^*
\tensorh + \pr_{\calV}^* \tensorv$ be the corresponding Riemannian
metric. The subbundle $\calV$ gives us a foliation of $M$, and
corresponding to this foliation we have a grading on forms, see
e.g. \cite{AlTo91,Alv92}. Let $\Omega(M)$ be the algebra of
differential forms on $M$. Let $\Ann(\calH)$ and $\Ann(\calV)$ be the
subbundles of $T^*M$ of elements vanishing on respectively $\calH$ and
$\calV$. If either $a$ or $b$ is a negative integer, then $\eta \in
\Omega(M)$ is a homogeneous element of degree $(a,b)$ if and only if
$\eta = 0$. Otherwise, for nonnegative integers $a$ and $b$, 
$\eta$ is a homogeneous element of degree $(a,b)$, if it is a sum of
elements which can be written as
$$\alpha \wedge \beta , \quad \textstyle \alpha \in \Gamma(\bigwedge^a \Ann(\calV)),\ \beta \in \Gamma(\bigwedge^b \Ann(\calH)).$$

Relative to this grading, we can split the exterior differential $d$
into graded com\-po\-nents
$$d = d^{1,0} + d^{0,1} + d^{2,-1} .$$
The same is true for its formal dual
$$\delta = \delta^{-1,0} + \delta^{0,-1} + \delta^{-2,1} ,$$
i.e. the dual with respect to the inner product on forms of compact
support $\alpha, \beta$, defined by
\begin{equation} \label{HodgeProduct} \langle \alpha, \beta \rangle =
  \int_M \alpha \wedge \star \beta , \qquad \alpha,\beta \text{ of compact support}, \end{equation} where $\star$ is
the Hodge star operator defined relative to $\tensorg$.  Note that
$\delta^{-a,-b}$ is the formal dual of $d^{a,b}$ from our assumptions
that $\calH$ and $\calV$ are orthogonal. We will give formulas for
each graded component.

\subsection{Metric-preserving complement and local representation}
We will use $\flat:TM \to T^*M$ for the map $v \mapsto \tensorg(v, \newbullet)$ with inverse $\sharp$. Let
$\rnabla$ be defined as in \eqref{rnabla} relative to $\tensorg$ and
the splitting $TM = \calH \oplus \calV$.  If $\alpha$ is a one-form
and $A_1, \dots, A_n$ and $V_1, \dots, V_\nu$ are respective local
orthonormal bases of $\calH$ and $\calV$, then locally
\begin{align*} d\alpha = & \sum_{i=1}^n \flat A_i \wedge \rnabla_{A_i}
  \alpha + \sum_{s=1}^\nu \flat V_s \wedge \rnabla_{V_s} \alpha -
  \alpha \circ \calR ,
\end{align*}
and hence each of the three terms are local representations of
respectively $d^{1,0} \alpha,$ $d^{0,1} \alpha$ and $d^{2,-1}
\alpha$. Local representations on forms of all orders follow.

Assume now that $\rnabla \tensorg = 0$, i.e. $\calV$ is a
metric-preserving compliment of $(\calH, \tensorh)$ with a metric
tensor $\tensorv$ satisfying $\rnabla \tensorv^* = 0$. From the
formula of $d^{1,0} \eta = \sum_{i=1}^n \flat A_i \wedge \rnabla_{A_i}
\eta$, we obtain $\delta^{-1,0}\eta = - \sum_{i=1}^n \iota_{A_i}
\rnabla_{A_i} \eta$ for any form $\eta$. Let $\srL$ be the
sub-Laplacian of $\calV$ or equivalently $\vol$. Let $\Delta$ be the
Laplacian of $\tensorg$.  Then it is clear that for any $f \in
C^\infty(M)$, we have
$$\Delta f = -\delta df, \qquad \srL f = -\delta^{-1,0} d^{1,0} f.$$
\begin{lemma} \label{lemma:srLDeltaCommute} For any form $\eta \in
  \Omega(M)$, we have
  \begin{equation} \label{DDeltaCommute} \delta^{-1,0} d^{0,1} \alpha
    = - d^{0,1} \delta^{-1,0} \alpha, \quad \delta^{0,-1} d^{1,0}
    \alpha = - d^{1,0} \delta^{0,-1} \alpha.\end{equation} As a
  consequence, for any $f \in C^\infty(M)$ we have $\srL \Delta f =
  \Delta \srL f.$
\end{lemma}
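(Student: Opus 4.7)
The claim has two parts: the operator identities in \eqref{DDeltaCommute} on all forms, and the scalar commutation $\srL\Delta f=\Delta\srL f$. The plan is to prove the identities first and then obtain the scalar statement by a purely algebraic chain.

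For the identities, the starting point is $d^2=0$. Extracting its bigrade-$(1,1)$ component yields $d^{1,0}d^{0,1}+d^{0,1}d^{1,0}=0$, and taking formal adjoints gives $\delta^{-1,0}\delta^{0,-1}+\delta^{0,-1}\delta^{-1,0}=0$; neither, however, directly produces the mixed anticommutators. I would obtain these by computing $\delta^{-1,0}d^{0,1}+d^{0,1}\delta^{-1,0}$ directly with the local expressions $d^{1,0}=\sum_i\flat A_i\wedge\rnabla_{A_i}$, $d^{0,1}=\sum_s\flat V_s\wedge\rnabla_{V_s}$ and $\delta^{-1,0}=-\sum_i\iota_{A_i}\rnabla_{A_i}$, using: (a) the metric compatibility of $\rnabla$, which gives the antisymmetry $\tensorg(\rnabla_{A_i}V_s,V_t)=-\tensorg(\rnabla_{A_i}V_t,V_s)$; (b) the Riemannian-foliation condition (equivalent to $\rnabla\tensorg=0$), which gives $\tensorg(\rnabla_{V_s}A_i,A_j)=-\tensorg(\rnabla_{V_s}A_j,A_i)$; and (c) the vanishing $T^{\rnabla}(A,V)=0$ of the torsion on mixed inputs. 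After pairwise cancellation using these antisymmetries, what remains is the pure curvature residue $\sum_{i,s}\flat V_s\wedge\iota_{A_i}R^{\rnabla}(A_i,V_s)$. Running the same calculation for $\{d^{1,0},d^{0,1}\}$ produces the companion residue $\sum_{i,s}\flat A_i\wedge\flat V_s\wedge R^{\rnabla}(A_i,V_s)$, and this sum is zero because $\{d^{1,0},d^{0,1}\}=0$. To pass from the pure-wedge identity to the contraction version, I would apply the partial Hodge star $\star_\calH$ along $\calH$: since $\rnabla\tensorg=0$ and $\rnabla$ preserves the splitting, $\star_\calH$ is $\rnabla$-parallel, hence commutes with $R^{\rnabla}$ and with $\flat V_s\wedge\cdot$, while converting $\flat A_i\wedge\cdot$ into $\iota_{A_i}$ up to a bigrade-dependent sign. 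Cancelling the invertible $\star_\calH$ on each bi-degree then transports the vanishing to $\sum_{i,s}\flat V_s\wedge\iota_{A_i}R^{\rnabla}(A_i,V_s)=0$, establishing the first identity in \eqref{DDeltaCommute}; the second follows by taking the formal adjoint (or symmetrically via $\star_\calV$).

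For the scalar consequence, on $f\in C^\infty(M)$ one has $df=d^{1,0}f+d^{0,1}f$ (since $d^{2,-1}f$ vanishes on $0$-forms), so $\Delta f=-\delta df=\srL f+\verticalL f$ where $\verticalL:=-\delta^{0,-1}d^{0,1}$. Thus $[\srL,\Delta]f=[\srL,\verticalL]f$, and by a formal chain applying the four anticommutators successively,
\begin{align*}
\srL\verticalL f
&=\delta^{-1,0}d^{1,0}\delta^{0,-1}d^{0,1}f
  =-\delta^{-1,0}\delta^{0,-1}d^{1,0}d^{0,1}f \\
&=\delta^{0,-1}\delta^{-1,0}d^{1,0}d^{0,1}f
  =-\delta^{0,-1}\delta^{-1,0}d^{0,1}d^{1,0}f \\
&=\delta^{0,-1}d^{0,1}\delta^{-1,0}d^{1,0}f
  =\verticalL\srL f.
\end{align*}
The main obstacle is the curvature identification step in the proof of \eqref{DDeltaCommute}, especially the partial Hodge star manipulation, which demands careful sign bookkeeping across the bi-degrees; once the operator-level identities are in hand, the scalar commutation is a mechanical four-step chain.
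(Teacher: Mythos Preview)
Your proposal is correct. The four-step chain for the scalar commutation $[\srL,\Delta]f=0$ is exactly the paper's argument, just written out explicitly (the paper compresses it to ``an even number of permutations'').

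For the operator identities \eqref{DDeltaCommute} your route differs genuinely from the paper's. The paper works at a fixed point $x_0$ using the adapted frames of Lemma~\ref{lemma:fromPartI}(b), which kill the first-order connection terms, and then invokes the geometric curvature fact of Lemma~\ref{lemma:fromPartI}(a), namely $\tensorg(R^{\rnabla}(A,V)Z,A)=0$, to absorb the commutator $[\rnabla_{V_s},\rnabla_{A_i}]$; the curvature residue never appears explicitly. You instead isolate that residue $\sum_{i,s}\flat V_s\wedge\iota_{A_i}R^{\rnabla}(A_i,V_s)$ by the antisymmetry cancellations (which indeed work exactly as you describe), and then annihilate it by conjugating the algebraic identity $\{d^{1,0},d^{0,1}\}=0$ through the partial Hodge star $\star_\calH$, which intertwines $e_{\flat A_i}$ with $\iota_{A_i}$ and commutes with the skew, splitting-preserving endomorphism $R^{\rnabla}(A_i,V_s)$. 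Your argument bypasses both auxiliary lemmas and handles all bidegrees at once, at the price of the $\star_\calH$ sign-tracking you flag. The paper's approach, in turn, isolates precisely the curvature input required, which is what enables the extension in Appendix~\ref{sec:NotIntegrable} to the non-integrable case: there $d$ acquires a $d^{-1,2}$ piece, so $\{d^{1,0},d^{0,1}\}$ is no longer zero and your Hodge-star transport breaks down, while a traced version of Lemma~\ref{lemma:fromPartI}(a) still suffices.
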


The following result is helpful for our computation in Part~I,
Lemma~3.3~(b) and Corollary~3.11.
\begin{lemma} \label{lemma:fromPartI}
  \begin{enumerate}[\rm(a)]
  \item For any horizontal $A \in \Gamma(\calH)$, a vertical $V
    \in \Gamma(\calV)$ and arbitrary vector filed $Z \in \Gamma(TM)$, we have
$$\tensorg(R^{\rnabla}(A,V) Z, A) = 0.$$
\item If $\rnabla \tensorg = 0$, then for every point $x_0$, there
  exist local orthonormal bases $A_1, \dots, A_n$ and $V_1, \dots
  V_\nu$, defined in a neighborhood of $x_0$, such that for any $Y \in
  \Gamma(TM)$,
$$\rnabla_Z A_i |_{x_0} = \frac{1}{2} \sharp \tensorg(Z, \calR(A_i, \newbullet))|_{x_0}, \qquad \rnabla_Z V_s |_{x_0} = 0.$$
\end{enumerate}
\end{lemma}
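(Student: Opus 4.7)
My plan is to prove (a) via the first Bianchi identity for the torsionful connection $\rnabla$, and (b) by a gauge adjustment of an arbitrary adapted orthonormal frame.

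For (a), I first observe that $\rnabla$ preserves $\tensorh$ on sections of $\calH$: for $X, Y \in \Gamma(\calH)$, the compatibility
\[
Z\,\tensorh(X,Y) = \tensorh(\rnabla_Z X, Y) + \tensorh(X, \rnabla_Z Y)
\]
follows from the Levi-Civita property when $Z \in \Gamma(\calH)$, and from the metric-preserving identity $\calL_V \pr_\calH^* \tensorh = 0$ when $Z = V \in \Gamma(\calV)$. Hence $R^\rnabla(\newbullet,\newbullet)|_\calH$ is $\so(\calH,\tensorh)$-valued, so $\tensorg(R^\rnabla(W,V)A, A) = 0$ for any $W \in \Gamma(\calH)$. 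Next, a direct computation from \eqref{rnabla}, using torsion-freeness of $\nabla$ on horizontal inputs and integrability of $\calV$ on vertical inputs, gives the torsion formula $T^\rnabla(X,Y) = -\calR(\pr_\calH X, \pr_\calH Y)$; in particular $T^\rnabla$ vanishes whenever one argument is vertical and always takes values in $\calV$. Applying the general first Bianchi identity
\[
\sum_{\mathrm{cyc}} R^\rnabla(X,Y)Z = \sum_{\mathrm{cyc}}\bigl[(\rnabla_X T^\rnabla)(Y,Z) + T^\rnabla(T^\rnabla(X,Y),Z)\bigr]
\]
to $(X,Y,Z) = (A,V,W)$ with $A, W \in \Gamma(\calH)$ and $V \in \Gamma(\calV)$, every term on the right except $(\rnabla_V T^\rnabla)(W,A)$ vanishes, and this remaining term is $\calV$-valued. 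Decomposing the left-hand side by the horizontal-vertical splitting, the horizontal parts give $R^\rnabla(A,V)W + R^\rnabla(V,W)A = 0$, i.e., $R^\rnabla(A,V)W = R^\rnabla(W,V)A$. Pairing with $A$ and invoking skew-symmetry yields $\tensorg(R^\rnabla(A,V)W, A) = \tensorg(R^\rnabla(W,V)A, A) = 0$; for $Z \in \Gamma(\calV)$ the statement is trivial since $R^\rnabla(A,V)Z \in \calV$ and $A \in \calH$.

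For (b), I take arbitrary smooth orthonormal frames $\{\hat A_i\}$ of $\calH$ and $\{\hat V_s\}$ of $\calV$ defined near $x_0$. Under $\rnabla \tensorg = 0$ the connection $1$-forms $\omega^A_{ij}, \omega^V_{st}$ of $\rnabla$ in these frames are $\so(n)$- and $\so(\nu)$-valued respectively. In some local coordinates $\{x^\mu\}$ centered at $x_0$, I define
\[
O^V(x) := \exp\bigl(-\omega^V_\mu(x_0)\, x^\mu\bigr), \qquad V_s := O^V_{st}\, \hat V_t,
\]
and a direct computation shows $\rnabla_Z V_s|_{x_0} = 0$ for every $Z$. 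For the horizontal frame, set $\Xi_{ij}(Z) := \tfrac{1}{2}\tensorg(Z, \calR(\hat A_i, \hat A_j))|_{x_0}$; this is $\so(n)$-valued because $\calR$ is antisymmetric. Defining analogously
\[
O^A(x) := \exp\bigl((\Xi - \omega^A)_\mu(x_0)\, x^\mu\bigr), \qquad A_i := O^A_{ij}\, \hat A_j,
\]
one computes $\rnabla_Z A_i|_{x_0} = \Xi_{ij}(Z)|_{x_0}\, A_j|_{x_0} = \tfrac{1}{2} \sharp \tensorg(Z, \calR(A_i, \newbullet))|_{x_0}$, using that the $1$-form $\tensorg(Z, \calR(A_i, \newbullet))$ annihilates $\calV$ and hence has its $\sharp$-image expressed in the orthonormal basis of $\calH$.

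The main obstacle is the Bianchi bookkeeping in (a): one must check that each $(\rnabla_X T^\rnabla)(Y,Z)$ and each $T^\rnabla(T^\rnabla(X,Y),Z)$ over the cyclic permutations dies. This relies crucially on the two structural features of $T^\rnabla$ noted above — vanishing on mixed inputs and taking values in $\calV$ — which together kill every contribution except the single vertical term $(\rnabla_V T^\rnabla)(W,A)$. Everything in (b) is then routine once the antisymmetry of $\Xi$ in its matrix indices is observed.
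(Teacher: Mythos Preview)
The paper (Part~II) does not actually give a proof of this lemma; as its label and the surrounding text indicate, the result is imported from Part~I of the series, so there is no proof here to compare against. That said, your argument is correct and self-contained.

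For (a), your torsion computation $T^{\rnabla}=-\calR(\pr_{\calH}\,\cdot,\pr_{\calH}\,\cdot)$ is right under the integrability of $\calV$, and the Bianchi bookkeeping is accurate: with $(X,Y,Z)=(A,V,W)$ every cyclic term on the right except $(\rnabla_V T^{\rnabla})(W,A)$ vanishes, and that one is $\calV$-valued because $T^{\rnabla}$ is $\calV$-valued and $\rnabla$ preserves $\calV$. Since $R^{\rnabla}$ preserves the splitting, the horizontal part of the left-hand side yields $R^{\rnabla}(A,V)W=R^{\rnabla}(W,V)A$, and then the metric-preserving assumption gives skewness of $R^{\rnabla}(W,V)|_{\calH}$ with respect to $\tensorh$, hence $\tensorg(R^{\rnabla}(W,V)A,A)=0$. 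Note that the metric-preserving hypothesis $\rnabla\tensorh^*=0$ is genuinely needed here, and it is indeed the standing assumption in the ambient section; part~(b) requires the stronger $\rnabla\tensorg=0$, which is why the lemma singles it out.

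For (b), the gauge-transformation construction via $O^V=\exp(-\omega^V_\mu(x_0)x^\mu)$ and $O^A=\exp((\Xi-\omega^A)_\mu(x_0)x^\mu)$ is the standard way to adjust a frame to have prescribed first derivatives at a point. The key point that $\Xi_{ij}(Z)=\tfrac{1}{2}\tensorg(Z,\calR(\hat A_i,\hat A_j))$ is skew in $i,j$ (so that $O^A$ lands in $SO(n)$ and the new frame stays orthonormal) follows immediately from the antisymmetry of $\calR$, and your identification of $\Xi_{ij}(Z)A_j$ with $\tfrac{1}{2}\sharp\tensorg(Z,\calR(A_i,\newbullet))$ at $x_0$ is correct since the one-form $\tensorg(Z,\calR(A_i,\newbullet))$ annihilates $\calV$.
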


\begin{proof}[of Lemma~\textup{\ref{lemma:srLDeltaCommute}}]
  It is sufficient to show one of the identities in
  \eqref{DDeltaCommute}, since $\delta^{-1,0}d^{0,1}$ is the formal
  dual of $\delta^{0,-1} d^{1,0}$. From
  Lemma~\ref{lemma:fromPartI}~(a), any $A \in
  \Gamma(\calH)$ and $V \in \Gamma(\calV)$ satisfy
$$\iota_A \rnabla_V \rnabla_A \alpha = \iota_A \rnabla_A \rnabla_V \alpha  + \iota_A \rnabla_{[V,A]} \alpha.$$
From the definition of $\rnabla$, it follows that
$T^{\rnabla}(A,V) = 0$, where $T^{\rnabla}$ is the torsion of
$\rnabla$. For a given point $x_0 \in M$, let $A_1, \dots, A_n$ and
$V_1, \dots, V_\nu$ be as in Lemma~\ref{lemma:fromPartI}~(b). All terms
below are evaluated at the point~$x_0$, giving us
\begin{align*}
  d^{0,1} \delta^{-1,0} \alpha &= -\sum_{s=1}^\nu \sum_{i=1}^n \flat V_s \wedge \rnabla_{V_s} \iota_{A_i} \rnabla_{A_i} \alpha \\
  &=  -\sum_{s=1}^\nu \sum_{i=1}^n  \flat V_s \wedge \iota_{\rnabla_{V_s} A_i} \rnabla_{A_i} \alpha -\sum_{s=1}^\nu \sum_{i=1}^n \flat V_s \wedge \iota_{A_i} \rnabla_{V_s} \rnabla_{A_i} \alpha \\
  &= -\frac{1}{2} \sum_{s=1}^\nu \sum_{i,j=1}^n \tensorg(V_s, \calR(A_i, A_j))  \flat V_s \wedge \iota_{A_j} \rnabla_{A_i} \alpha \\
  &\quad -\sum_{s=1}^\nu \sum_{i=1}^n \flat V_s \wedge \iota_{A_i} \rnabla_{A_i} \rnabla_{V_s} \alpha -\sum_{s=1}^\nu \sum_{i=1}^n \flat V_s \wedge \iota_{A_i} \rnabla_{\rnabla_{V_s} A_i - \rnabla_{A_i} V_s} \alpha \\
  &=   \sum_{s=1}^\nu \sum_{i,j=1}^n \iota_{A_i} \left(\flat V_s \wedge  \rnabla_{A_i} \rnabla_{V_s} \right) \alpha \\
  &= \sum_{s=1}^\nu \sum_{i,j=1}^n \iota_{A_i} \rnabla_{A_i}
  \left(\flat V_s \wedge \rnabla_{V_s} \right) \alpha = -\delta^{-1,0}
  d^{1,0} \alpha. \end{align*}

Next, we prove the identity $[\srL, \Delta] f = 0$. If we consider the degree $(1,1)$-part of $d^2 = 0$, we get
$$d^{0,1} d^{1,0} + d^{1,0} d^{0,1}  = 0.$$
The same relation will then hold for their formal duals.  Since
$\Delta f = \srL f - \delta^{0,1} d^{0,1} f,$ it is sufficient to show
that $\srL \delta^{0,-1} d^{0,1} f = \delta^{0,-1} d^{0,1} \srL f.$
This gives us the result
\begin{align*} & \srL \delta^{0,-1} d^{0,1} f = -\delta^{-1,0} d^{1,0}
  \delta^{0,-1} d^{0,1} f = - \delta^{0,-1} d^{0,1} \delta^{-1,0}
  d^{1,0} f = \delta^{0,-1} d^{0,1} \srL f,
\end{align*}
since we have to do an even number of permutations.\qed
\end{proof}

\subsection{Spectral theory of the sub-Laplacian} \label{sec:Spectral}
Let $L$ be a self-adjoint operator on $L^2(M,\vol)$ with domain
$\Dom(L)$. Define $\|f\|_{\Dom(L)}^2 = \|f\|^2_{L^2}
+\|Lf\|_{L^2}^2$. Write the spectral decomposition of $L$ as $L =
\int_{-\infty}^\infty \lambda dE_\lambda$ with respect to the
corresponding projector valued spectral measure $E_\lambda$. For any Borel
measurable function $\varphi\colon \real \to \real$, we write
$\varphi(L)$ for the operator $\varphi(L) := \int_{-\infty}^\infty
\varphi(\lambda) dE_{\lambda}$ which is self adjoint on its domain
$$\Dom(\varphi(L)) = \left\{ f \in L^2(M, \vol) \, \colon \, \int_{-\infty}^\infty \varphi(\lambda)^2 d\langle E_\lambda f, f \rangle \right\}.$$
In particular, if $\varphi$ is bounded, $\varphi(L)$ is defined on the
entire of $L^2(M,\vol)$. See \cite[Ch VIII.3]{ReSi80} for details.

Let $(M, \calH, \tensorh)$ be a sub-Riemannian manifold with
sub-Laplacian $\srL$ defined relative to a volume form $\vol$. Assume
that $\calH$ is bracket-generating and that $(M, \metricd_{cc})$ is
complete metric space, where~$\metricd_{cc}$ is the
Carnot-Carath\'eodory metric of~$(\calH, \tensorh)$. Then
$$\int_M f \srL g \, \dvol = \int_M g \srL f \, \dvol\quad\text{and}\quad\int f \srL
f \dvol \leq 0.$$ From \cite[Section 12]{Str86}, we have that $\srL$ is
a an essentially self adjoint operator on $C_c^\infty(M)$. We denote
its unique self-adjoint extension by $\srL$ as well with domain
$\Dom(\srL) \subseteq L^2(M, \vol).$

Since $\srL$ is non-positive and the maps
$\lambda \mapsto e^{t\lambda/2}$ and $\lambda \mapsto \lambda^j
e^{t\lambda/2}$ are bounded on 
$(- \infty, 0]$ for $t > 0$, $j > 0$, we have that $f \mapsto e^{t/2 \srL} f$ is
a map from $L^2(M, \vol)$ into $\cap_{j=1}^\infty \Dom(\srL^j)$.
Define $P_tf$ as in Section~\ref{sec:GeneralOperator} with respect to $\frac{1}{2}\srL$-diffusions
for bounded measurable functions $f$. Then clearly $\| P_t f\|_{L^\infty} \leq \| f\|_{L^\infty}$. Since $\srL$ is symmetric with respect to $\vol$ and $P_t 1 \leq 1$, we obtain $\|P_t f\|_{L^1} \leq \| f\|_{L^1}$ as well.
The Riesz-Thorin theorem then ensures that $\|P_t f\|_{L^p} \leq \|f\|_{L^p}$ for any $1 \leq p \leq
\infty.$ In particular, $P_t f$ is in $L^2(M, \vol)$ whenever $f$ is in $L^2(M, \vol)$. This implies that $P_t f = e^{t/2\srL} f$ for any bounded $f \in L^2(M, \vol)$ by the following result.  
\begin{lemma}[\rm{\cite[Prop]{Li84}}, {\cite[Prop 4.1]{BaGa12}}]  \label{lemma:Lpunique} 
Let $L$ be equal to the Laplacian $\Delta$ or sub-Laplacian $\srL$ defined relative to a complete Riemannian or sub-Riemannian metric, respectively. Let $u_t(x)$ be a solution in $L^2(M,\vol)$ of the heat equation
$$(\partial_t - L) u_t = 0, \quad u_0 = f,$$
for a function $f \in L^2(M,\vol)$. Then $u_t(x)$ is the unique solution of this equation in $L^2(M,\vol)$.
\end{lemma}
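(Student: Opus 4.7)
My plan is to prove uniqueness by an energy method localized with cutoff functions that exist by completeness of the underlying distance. Fix $T>0$; by linearity it suffices to show that any $L^2$-solution $w_t$ of $(\partial_t - \tfrac12 L)w = 0$ on $[0,T]$ with $w_0 = 0$ vanishes identically.

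The first ingredient is regularity: since $L$ is hypoelliptic (ellipticity in the Riemannian case, H\"ormander's theorem in the sub-Riemannian case using bracket-generation of $\calH$), every distributional solution is smooth on $(0,T]\times M$, so the manipulations below are classical. The second ingredient is good cutoffs: using completeness of $(M,\metricd_{cc})$ together with precompactness of metric balls, I would construct, for $R\ge 1$, functions $\varphi_R \in C_c^\infty(M)$ with $0\le \varphi_R \le 1$, with $\varphi_R \equiv 1$ on the ball of radius $R$ about a fixed basepoint, and with $\|\sfGamma^{\tensorh^*}(\varphi_R)\|_{L^\infty} \to 0$ as $R\to\infty$. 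This is essentially the construction used by Strichartz in \cite{Str86} to establish essential self-adjointness of $\srL$.

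The core step is a localized energy identity. Multiplying the equation by $\varphi_R^2 w_t$, integrating, and integrating by parts (legitimate by compact support of $\varphi_R$) yields
$$\frac{d}{dt} \int_M \varphi_R^2 w_t^2 \dvol \;=\; -\int_M \sfGamma^{\tensorh^*}(\varphi_R^2 w_t,\, w_t)\dvol.$$
Expanding the right-hand side and dominating the cross term $2\varphi_R w_t\,\sfGamma^{\tensorh^*}(\varphi_R, w_t)$ via AM-GM by $\varphi_R^2 \sfGamma^{\tensorh^*}(w_t) + w_t^2\, \sfGamma^{\tensorh^*}(\varphi_R)$ absorbs the positive gradient term and leaves
$$\frac{d}{dt} \int_M \varphi_R^2 w_t^2 \dvol \;\le\; \|\sfGamma^{\tensorh^*}(\varphi_R)\|_{L^\infty}\, \|w_t\|_{L^2}^2.$$
Integrating from $0$ to $t$ with $w_0 = 0$, and then letting $R\to\infty$ so that $\|\sfGamma^{\tensorh^*}(\varphi_R)\|_{L^\infty} \to 0$ while, by monotone convergence, $\int \varphi_R^2 w_t^2 \dvol \to \|w_t\|_{L^2}^2$, forces $\|w_t\|_{L^2}^2 \le 0$, hence $w_t \equiv 0$.

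The main technical obstacle is the precise interpretation of "solution in $L^2(M,\vol)$": the proof quietly uses that $t\mapsto \|w_t\|_{L^2}^2$ is absolutely continuous and that $\int_0^T \|w_s\|_{L^2}^2\, ds<\infty$, neither of which is imposed \emph{a priori}. Under the usual convention that a solution is a strongly continuous $L^2$-valued curve satisfying the equation distributionally, combined with the interior smoothness guaranteed by hypoellipticity, this bookkeeping is routine. Without such a convention one would have to work harder, typically via a Gaussian weight or a Tychonoff-type growth hypothesis, but for the applications in this paper the standard interpretation suffices.
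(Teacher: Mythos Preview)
The paper does not give its own proof of this lemma; it is stated with citations to \cite{Li84} and \cite[Prop.~4.1]{BaGa12} and used as a black box. Your energy-method argument with cutoffs is precisely the standard proof in those references (and is the same mechanism Strichartz uses in \cite{Str86} for essential self-adjointness, which you correctly identify), so there is nothing to compare.

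Your argument is correct, and you have honestly flagged the only real issue: the phrase ``solution in $L^2(M,\vol)$'' must be read so that $t\mapsto \|w_t\|_{L^2}^2$ is locally integrable and the localized energy is differentiable (or at least absolutely continuous). In the paper's applications the solutions in question are $P_t f$ and $Q_t f$, which are strongly continuous in $L^2$ and smooth in $(t,x)$ by hypoellipticity, so this is unproblematic. One cosmetic point: the lemma as stated has $(\partial_t - L)u_t = 0$ while you wrote $(\partial_t - \tfrac12 L)w = 0$; this is harmless since the factor $\tfrac12$ does not affect uniqueness.
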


Hence, we will from now on just write $P_t = e^{t/2 \srL}$ without much abuse of notation.

\subsubsection{Global bounds using spectral theory}
We now introduce some additional assumptions. Assume that $\tensorg$
is a complete Riemannian metric with volume form $\vol$, such that
$\tensorg|_{\calH} = \tensorh, \calH^\perp = \calV$ and
$\tensorg|_{\calV} = \tensorv.$ Let $\Delta$ be the Laplace-Beltrami
operator of $\tensorg$ and write $\Delta f = \srL f + \verticalL f$
where $\verticalL f = \dv \shv df$. Since $\tensorg$ is complete,
$\Delta$ is also essentially self-adjoint on $C_c^\infty(M)$ by
\cite{Str83} and we will also denote its unique self-adjoint extension
by the same symbol.

Assume that $\rnabla \tensorg = 0$ where $\rnabla$ is defined as in
\eqref{rnabla}. Recall that $\srL$ and $\Delta$ commute on
$C_c^\infty(M)$ by Lemma~\ref{lemma:srLDeltaCommute}.

\begin{lemma} \label{lemma:SpecC}\quad
  \begin{enumerate}[\rm(a)]
  \item The operators $\srL$ and $\Delta$ spectrally commute, i.e. for
    any bounded Borel function $\varphi: \real \to \real$ and $f \in
    L^2(M,\vol)$,
$$\varphi(\srL) \varphi(\Delta)f = \varphi(\Delta) \varphi(\srL)f.$$
Also $\Dom(\Delta) \subseteq \Dom(\srL).$
\item Assume that $\srL$ satisfies the assumptions of
  Theorem~\textup{\ref{th:CD}} with $\mcalR >0$. Then there exist a
  constant $C = C(\rho_1, \rho_2)$ such that for any $f \in
  C^\infty(M) \cap \Dom(\srL^2)$,
$$C\|f\|_{\Dom(\srL^2)}^2 = C\left(\|f\|^2_{L^2} + \|\srL^2 f\|_{L^2}^2 \right),$$
is an upper bound for
$$\int_M \sfGamma^{\tensorh^*}(f) \dvol, \quad \int_M \sfGamma^{\tensorh^*}_2(f) \dvol, \quad \int_M \sfGamma^{\tensorv^*}(f) \dvol \quad \text{and} \quad \int_M \sfGamma^{\tensorv^*}_2(f) \dvol.$$
\end{enumerate}
\end{lemma}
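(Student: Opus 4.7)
The plan is to handle (a) via standard spectral commutativity of self-adjoint operators on a common core, and then (b) by combining the joint spectral resolution from (a) with a pointwise spectral inequality extracted from integrating \eqref{CD}.

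For (a), Lemma~\ref{lemma:srLDeltaCommute}(c) gives $[\srL,\Delta]=0$ on the common core $C_c^\infty(M)$, on which both operators are essentially self-adjoint. I would upgrade this algebraic commutation to commutation of the spectral projections by the standard argument via analytic vectors, or equivalently via the commuting heat semigroups on $C_c^\infty(M)$ extended by density (see \cite[VIII.13]{ReSi80}). This produces a joint spectral resolution $E_{\lambda,\mu}$. Both $\srL$ and $\Delta$ are non-positive, and $\verticalL=\Delta-\srL$ is non-positive because $-\int f\verticalL f\dvol=\int\sfGamma^{\tensorv^*}(f)\dvol\ge 0$, so the support of $E$ is contained in $\{\mu\le\lambda\le 0\}$. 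On this set $\lambda^2\le\mu^2$, so $\|\srL f\|_{L^2}^2=\int\lambda^2\,d\langle Ef,f\rangle\le\int\mu^2\,d\langle Ef,f\rangle=\|\Delta f\|_{L^2}^2$, yielding $\Dom(\Delta)\subseteq\Dom(\srL)$.

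For (b), the horizontal integrals are elementary: using $\|\srL f\|^2=\langle f,\srL^2 f\rangle\le\|f\|\,\|\srL^2 f\|\le\frac{1}{2}(\|f\|^2+\|\srL^2 f\|^2)$ and self-adjointness of $\srL$,
\begin{align*}
\int_M\sfGamma^{\tensorh^*}(f)\dvol &=\langle -\srL f,f\rangle\le\|f\|\,\|\srL f\|\le\tfrac{3}{4}\|f\|^2+\tfrac{1}{4}\|\srL^2 f\|^2,\\
\int_M\sfGamma^{\tensorh^*}_2(f)\dvol &=\|\srL f\|^2\le\tfrac{1}{2}(\|f\|^2+\|\srL^2 f\|^2).
\end{align*}
For the vertical integrals, the heart of the argument is a pointwise spectral inequality: there is $C=C(\rho_1,\rho_2)$ such that $\mu^2\le C(1+\lambda^4)$ on the support of $E$. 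To obtain it I would apply \eqref{CD} integrated against spectrally concentrated test functions $f_\phi=\phi(\srL,\Delta)g$ with $g\in C_c^\infty(M)$ and $\phi$ a smooth approximation of a Dirac mass at a chosen joint-spectrum point $(\lambda_0,\mu_0)$. Such $f_\phi$ are smooth by hypoellipticity of $\srL$, lie in all $\Dom(\srL^k)\cap\Dom(\Delta^k)$, and the relevant integrals can be computed through $\int\sfGamma^{\tensorh^*}(f)=\langle-\srL f,f\rangle$, $\int\sfGamma^{\tensorh^*}_2(f)=\|\srL f\|^2$, $\int\sfGamma^{\tensorv^*}(f)=\langle(\srL-\Delta)f,f\rangle$ and $\int\sfGamma^{\tensorv^*}_2(f)=\int\srL f\cdot\verticalL f\dvol$. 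With the optimal choice $\ell=\rho_2/(2|\lambda_0|)$ in the integrated \eqref{CD} and $\phi$ concentrating at $(\lambda_0,\mu_0)$, a direct calculation gives $|\mu_0|\le C'(\rho_1,\rho_2)(|\lambda_0|+\lambda_0^2)$, and squaring yields the claimed spectral bound.

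Given the spectral inequality, $\|\Delta f\|_{L^2}^2\le C(\|f\|^2+\|\srL^2 f\|^2)$ follows by integrating $\mu^2$ against $d\langle Ef,f\rangle$, so $\Dom(\srL^2)\subseteq\Dom(\Delta)$ with a quantitative bound. The two remaining vertical estimates then come from the Cauchy--Schwarz bounds
\[
\int\sfGamma^{\tensorv^*}(f)\dvol\le\|f\|(\|\Delta f\|+\|\srL f\|),\qquad \int\sfGamma^{\tensorv^*}_2(f)\dvol\le\|\srL f\|(\|\Delta f\|+\|\srL f\|),
\]
combined with the horizontal estimates already established. The main obstacle is the spectral inequality step: transferring the pointwise-on-$M$ inequality \eqref{CD} into a pointwise-on-joint-spectrum inequality requires verifying that \eqref{CD} integrated applies to the non-compactly-supported smooth functions $f_\phi$ and that all operations commute cleanly with the limit $\phi\to\delta_{(\lambda_0,\mu_0)}$; I would handle this by a double mollification, one in the spectral variables and one on $M$, together with the hypoelliptic regularity of $\srL$.
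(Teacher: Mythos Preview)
Your handling of part~(a) has a genuine gap. Algebraic commutation of two essentially self-adjoint operators on a common core does \emph{not} imply spectral commutation; this is precisely the content of Nelson's counterexample (discussed in \cite[Section~VIII.5]{ReSi80}). You gesture at ``analytic vectors'' and ``commuting heat semigroups extended by density'', but neither is automatic: for the first you would need a dense set of vectors analytic for $\srL^2+\Delta^2$, which is nowhere established; for the second, neither $P_t$ nor $Q_s=e^{t\Delta/2}$ preserves $C_c^\infty(M)$, so the core commutation $[\srL,\Delta]f=0$ cannot be pushed through the semigroups by density alone. In particular, your derivation of $\Dom(\Delta)\subseteq\Dom(\srL)$ presupposes the joint spectral resolution whose existence is exactly what is in question.

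The paper avoids this trap by reversing your order of argument. It first proves the domain inclusion directly, via the graded-form computation
\[
\int_M \verticalL f\cdot\srL f\,\dvol=\langle d^{1,0}d^{0,1}f,\,d^{1,0}d^{0,1}f\rangle\ge 0
\]
(using Lemma~\ref{lemma:srLDeltaCommute}), which gives $\|\srL f\|_{L^2}\le\|\Delta f\|_{L^2}$ on the core and hence $\Dom(\Delta)\subseteq\Dom(\srL)$. Only then, knowing that $\srL Q_tf$ is well-defined, does it invoke the $L^2$-uniqueness of heat solutions (Lemma~\ref{lemma:Lpunique}) to obtain $\srL Q_t=Q_t\srL$ on $\Dom(\srL)$ and subsequently $P_tQ_s=Q_sP_t$ on all of $L^2$. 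Commutation of the semigroups is then equivalent to spectral commutation by \cite[Chapter~VIII.5]{ReSi80}.

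For part~(b) your approach via a pointwise joint-spectral inequality $\mu^2\le C(1+\lambda^4)$ is considerably more elaborate than what is needed, and you correctly identify its weak point: applying the integrated form of \eqref{CD} to the non-compactly-supported functions $f_\phi=\phi(\srL,\Delta)g$ requires a separate justification. The paper instead refers to \cite[Lemma~3.4 \& Prop.~3.6]{BaGa12}, where the bounds are obtained by integrating \eqref{CD} directly against $f\in C_c^\infty(M)$ and extending to $\Dom(\srL^2)$ by density, bypassing the spectral-concentration machinery entirely.
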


\begin{proof}
  \begin{enumerate}[\rm(a)]
  \item Note first that for any $f \in C_c^\infty(M)$, using
    Lemma~\ref{lemma:srLDeltaCommute} and the inner product
    \eqref{HodgeProduct}
    \begin{align*}
      \int_M \verticalL f \srL f \dvol  
     &=  \langle \delta^{0,-1} d^{0,1} f, \delta^{-1,0} d^{1,0} f\rangle 
    = \langle d^{0,1} f, d^{0,1} \delta^{-1,0} d^{1,0}f \rangle \\
      &= - \langle d^{0,1} f, \delta^{-1,0} d^{0,1} d^{1,0}f \rangle =
      \langle d^{1,0} d^{0,1} f, d^{1,0} d^{0,1} f \rangle \geq 0.
    \end{align*}
    Hence
$$\int_M (\srL f)^2 \dvol \leq \int_M ((\srL + \verticalL) f)^2 \dvol = \int_M (\Delta f)^2 \dvol,$$
and hence $\| \srL f\|_{L^2} \leq \|\Delta f\|_{L^2}$ is true for any
$f \in \Dom(\Delta)$. We conclude that $\Dom(\Delta) \subseteq
\Dom(\srL)$. Define $Q_t = e^{t/2 \Delta}$. It follows that, for any
$f \in \Dom(\srL)$, $u_t = \srL Q_t f$ is an $L^2(M,\vol)$ solution of
$$\left(\frac{\partial}{\partial t} - \frac{1}{2} \Delta\right) u_t = 0, \quad u_0 =
\srL f.$$ In conclusion, by Lemma~\ref{lemma:Lpunique} we obtain $\srL
Q_t f = Q_t \srL f$.

For any $s >0$ and $f \in L^2(M, \vol)$, we know that $Q_sf \in
\Dom(\Delta) \subseteq \Dom(\srL)$, and since $$(\partial_t -
\frac{1}{2} \srL) Q_s P_t f = 0,$$ it again follows from
Lemma~\ref{lemma:Lpunique} that $P_t Q_s f = Q_s P_t f$ for any $s,t
\geq 0$ and $f \in L^2(M, \vol)$. It follows that the operators
spectrally commute, see \cite[Chapter VIII.5]{ReSi80}.

\item From Theorem~\ref{th:CD}, we know that $\srL$ satisfies \eqref{CD}
  with $\rho_2 >0$ and an appropriately chosen value of~$c$. The proof
  is otherwise identical to \cite[Lemma~3.4 \& Prop~3.6]{BaGa12} and
  is therefore omitted.\qed
\end{enumerate}
\end{proof}

\subsubsection{Proof of Theorem~\textup{\ref{th:CondARiemann}}} \label{sec:ProofCondARiemann}
We are going to prove that \eqref{CondA} holds without using
stochastic analysis. We therefore need the following lemma.
\begin{lemma}[\rm{\cite[Prop 4.2]{BaGa12}}] \label{lemma:preSubM}  
Assume
  that $(M, \tensorg)$ is a complete Riemannian manifold. For any $T> 0$, let $u,
  v\in C^\infty(M \times [0,T])$, $(x,t) \mapsto u_t(x)$, $(x,t)
  \mapsto v_t(x)$ be smooth functions satisfying the following
  conditions:
  \begin{enumerate}[\rm(i)]
  \item For any $t \in [0,T]$, $u_t \in L^2(M,\vol)$ and $\int_0^T
    \|u_t\|_{L^2} dt < \infty.$
  \item For some $1\leq p \leq \infty,$ $\int_0^T
    \|\sfGamma^{\tensorh^*}(u_t)^{1/2} \|_{L^p} \dvol < \infty.$
  \item For any $t \in [0,T]$, $v_t \in L^q(M,\vol)$ and $\int_0^T
    \|v_t\|_{L^q} dt < \infty$ for some $1 \leq q \leq \infty$.
  \end{enumerate}
  Then, if $(L + \frac{\partial}{\partial t} ) u \geq v$ holds on $M
  \times [0,T]$, we have
$$P_T u_T \geq u_0 + \int_0^t P_t v_t dt.$$
\end{lemma}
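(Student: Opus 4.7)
The plan is to verify $P_T u_T \geq u_0 + \int_0^T P_t v_t\,dt$ weakly, by testing against arbitrary $\phi \in C_c^\infty(M)$ with $\phi \geq 0$, and then recovering the pointwise statement by varying $\phi$. The formal identity driving everything is $\frac{d}{dt}(P_t u_t) = P_t\bigl((L+\partial_t)u_t\bigr)$, which under the hypothesis becomes $\frac{d}{dt}(P_t u_t) \geq P_t v_t$; integration over $[0,T]$ then yields the result. The entire content of the lemma lies in making this rigorous in the absence of compact support for $u_t$.

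Concretely I would set
\[
F(t) := \int_M \phi \, P_t u_t \, \dvol = \int_M (P_t \phi)\, u_t \, \dvol, \qquad t \in [0,T],
\]
the second equality holding by self-adjointness of $P_t$ on $L^2(M,\vol)$ discussed in Section~\ref{sec:Spectral}. Under hypothesis (i) and $L^p$-contractivity of $P_t$, $F$ is well-defined and continuous on $[0,T]$, with $F(0) = \int_M \phi\, u_0 \, \dvol$ and $F(T) = \int_M \phi\, P_T u_T\, \dvol$. Differentiating formally and moving $L$ from $P_t\phi$ onto $u_t$ by integration by parts yields
\[
F'(t) = \int_M (P_t\phi)(Lu_t + \partial_t u_t)\, \dvol \geq \int_M (P_t\phi)\, v_t\, \dvol = \int_M \phi \, P_t v_t\, \dvol,
\]
so integrating in $t$ and varying $\phi$ over a dense subset of $C_c^\infty(M)_{\geq 0}$ delivers the desired pointwise inequality by continuity of both sides.

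The main obstacle is justifying the integration by parts when $u_t$ is not compactly supported. I would resolve this by a cutoff argument: fix a sequence $\chi_n \in C_c^\infty(M)$ with $\chi_n \uparrow 1$ and $\sup_n \|\sfGamma^{\tensorg^*}(\chi_n)^{1/2}\|_{L^\infty} < \infty$, available since $(M,\tensorg)$ is complete. Applying Green's identity with $\chi_n P_t \phi$ in place of $P_t \phi$ produces error terms from $L(\chi_n P_t\phi) - \chi_n L P_t\phi = L\chi_n \cdot P_t\phi + 2\sfGamma^{\tensorh^*}(\chi_n, P_t\phi)$. Pairing these with $u_t$ and using Cauchy--Schwarz in the $(p,p')$-pairing bounds them in terms of $\|u_t\|_{L^2}$ and $\|\sfGamma^{\tensorh^*}(u_t)^{1/2}\|_{L^p}$, which are integrable in $t$ by (i) and (ii). The remaining factors tend to zero as $n \to \infty$ because $P_t\phi$ and $\sfGamma^{\tensorh^*}(P_t\phi)^{1/2}$ are smooth with rapid decay at infinity (a consequence of $\phi$ being compactly supported and the regularizing action of $P_t$ on $L^2$ via spectral calculus), while hypothesis (iii) ensures integrability in $t$ of the right-hand side. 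Packaging all of this with dominated convergence is the hardest technical point; once in place, the weak inequality upgrades to the pointwise statement $P_T u_T \geq u_0 + \int_0^T P_t v_t\,dt$.
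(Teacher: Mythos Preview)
The paper does not include a proof of this lemma; it is quoted from \cite[Prop~4.2]{BaGa12} and used as a black box in the proof of Theorem~\ref{th:CondARiemann}. There is thus nothing in the present paper to compare your proposal against.

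That said, your outline is the standard route and is essentially what the original reference does: pair with a nonnegative $\phi \in C_c^\infty(M)$, move $P_t$ across by self-adjointness, differentiate $F(t)=\int_M (P_t\phi)\,u_t\,\dvol$, justify the integration by parts via cutoffs $\chi_n$ afforded by completeness of $\tensorg$, and integrate in $t$. Two details in your write-up would not survive a careful reading. First, the assertion that $P_t\phi$ and $\sfGamma^{\tensorh^*}(P_t\phi)^{1/2}$ enjoy ``rapid decay at infinity'' is neither available in this generality nor needed; what actually drives the error terms to zero is that $P_t\phi\in L^2\cap L^{p'}\cap L^{q'}$ (by $L^r$-contractivity of $P_t$, since $\phi\in L^r$ for all $r$) and $\sfGamma^{\tensorh^*}(P_t\phi)^{1/2}\in L^2$ (from $\int_M\sfGamma^{\tensorh^*}(P_t\phi)\,\dvol=-\langle P_t\phi, LP_t\phi\rangle<\infty$), combined with hypotheses (i)--(iii) via H\"older. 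Second, you should take $\chi_n$ with $\|\sfGamma^{\tensorg^*}(\chi_n)^{1/2}\|_{L^\infty}\to 0$, not merely bounded; completeness of $\tensorg$ permits this, and it is what makes the cutoff errors vanish. With those two adjustments your argument goes through.
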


  Let $P_t = e^{t/2 \srL}$.  For given compactly supported $f \in C_c^\infty(M)$ and $T
  >0$, define function
  \begin{equation} \label{zte} z_{t,\ve} =
    \left(\sfGamma^{\tensorv^*}(P_{T-t} f) + \ve^2 \right)^{1/2} -
    \ve,\end{equation} with $\ve > 0, t\in [0,T]$. Since $P_t f \in
  \Dom(\srL^2)$, Lemma~\ref{lemma:SpecC}~(b) tells us that,
$$\|z_{t,\ve}\|_{L^2} \leq \int_M \sfGamma^{\tensorv^*}(P_{T-t}) \vol \leq C \| P_{T-t} f\|_{\Dom(\srL^2)} < \infty,$$
so that $z_{t,\ve} \in L^2(M, \vol)$. By
Proposition~\ref{prop:DoubleGamma},
\begin{equation} \label{vdoubleGamma} \sfGamma^{\tensorh^*}(z_{t,\ve})
  \leq \frac{\sfGamma^{\tensorh^*}(\sfGamma^{\tensorv^*}(P_{T-t}
    f))}{4 z_{t,\ve} + \ve} \leq \sfGamma_2^{\tensorv^*}(P_{T-t} f)
  .\end{equation} From Lemma~\ref{lemma:SpecC}~(b) it follows that
both (i) and (ii) of Lemma~\ref{lemma:preSubM} is satisfied. Hence, using that from Proposition~\ref{prop:DoubleGamma}
\begin{align} \label{ztesubmartingale} & \left(\partial_t -
    \frac{1}{2} \srL \right) z_{t,\ve} \\ \nonumber &\quad =
  \frac{1}{2 (z_{t,\ve} + \ve)^3} \left( \sfGamma^{\tensorv^*}(P_{T-t}
    f) \sfGamma^{\tensorv^*}_2(P_{T-t} f) - \frac{1}{4}
    \sfGamma^{\tensorh^*}(\sfGamma^{\tensorv^*}(P_{T-t} f)) \right)
  \geq 0,
\end{align}
we get $P_T z_{T,\ve} = P_T (\sfGamma^{\tensorv^*}(f) + \ve^2 )^{1/2}
- P_T \ve \geq z_{0,\ve} = (\sfGamma^{\tensorv^*}(P_Tf) + \ve
^2)^{1/2} - \ve$. By letting $\ve $ tend to $0$, we obtain
\begin{equation} \label{vgradientb} \sqrt{\sfGamma^{\tensorv^*}(P_Tf)}
  \leq P_T \sqrt{\sfGamma^{\tensorv^*}(f)}.\end{equation} Next, let
$y_{t,\ve} = \left(\sfGamma^{\tensorh^*}(P_{T-t} f) + \ve^2
\right)^{1/2} - \ve$, choose any $\alpha > \max\{ -\rRicH, \MRicHV^2
\} \geq 0$ and define
$$u_{t,\ve} = e^{-\alpha/2(T-t)} \big(y_{t,\ve} + \ell \sfGamma^{\tensorv^*}(P_{T-t}(f)) \big) .$$
Note first that
\begin{align*}
  &\left(\frac{\partial}{\partial t} + \frac{1}{2} \srL \right) u_{t,\ve}\\&\quad = \frac{e^{-\alpha/2(T-t)}}{2 y_{t,\ve} + 2 \ve} \left( \sfGamma^{\tensorh^*}_2(P_{T-t} f) + \ell y_{t,\ve} \sfGamma^{\tensorv^*}_2(P_{T-t} f) - \frac{1}{4y_{t,\ve}^2} \sfGamma^{\tensorh^*}(\sfGamma^{\tensorh^*}(P_{T-t}f)) \right) \\
  &\qquad + \frac{\alpha e^{-\alpha/2(T-t)}}{2 y_{t,\ve} + 2 \ve}
  \left( \sfGamma^{\tensorh^*}(P_{T-t} f) + \ve+ \ell y_{t,s}
    \sfGamma^{\tensorv^*}(P_{T-t}f) \right) .
\end{align*}
We use Proposition~\ref{prop:DoubleGamma} with $\ell$ replaced by
$\ell y_{t,\ve}$ to get
\begin{align*}
\frac{1}{4 y_{t,\ve}^2} \sfGamma^{\tensorh^*}(\sfGamma^{\tensorh^*}(P_{T-t}f)) 
\leq \sfGamma^{\tensorh^*}_2(f) &- (\rRicH - c^{-1} - \ell^{-1} y_{t,\ve}^{-1}) \sfGamma^{\tensorh^*}(f) \\
&+ \ell y_{t,\ve} \sfGamma^{\tensorv^*}_2(f) - c \MRicHV^2 \sfGamma^{\tensorv^*}(f).
\end{align*}
As a result, for any $c > 0$, $\left(\frac{\partial}{\partial t} + \frac{1}{2} \srL \right) u_{t,\ve}$ has lower bound
\begin{align*}
  &\frac{e^{-\alpha/2(T-t)}}{2 y_{t,\ve} + 2\ve} \left( (\rRicH - c^{-1} - \ell^{-1} y_{t,\ve}^{-1} ) \sfGamma^{\tensorh^*}(P_{T-t} f) - c \MRicHV^2 \sfGamma^{\tensorv^*}(P_{T-t} f) \right) \\
  &\qquad + \frac{\alpha e^{-\alpha/2(T-t)}}{2 y_{t,\ve} + 2\ve} \left( \sfGamma^{\tensorh^*}(P_{T-t} f) + \ell y_{t,\ve} \sfGamma^{\tensorv^*}(P_{T-t}f) \right). 
\end{align*}
Since it is true for any value of $c >0$, it remains true for $c =
\ell y_{t,\ve}$, and hence
\begin{align*}
  \left(\frac{\partial}{\partial t} + \frac{1}{2} \srL \right)
  u_{t,\ve} \geq & - \frac{e^{-\alpha/2(T-t)}}{\ell} .
\end{align*}
In a similar way as before, we can verify that the conditions of
Lemma~\ref{lemma:preSubM} hold by using Lemma~\ref{lemma:SpecC}. We
can hence conclude that
\begin{align*}
u_{0,\ve} &=  e^{-\alpha T/2} (y_{0,\ve} + \ell \sfGamma^{\tensorv^*}(P_Tf)) \\
  & \leq P_T u_{T,\ve} + \int_0^T P_t \frac{e^{-\alpha(T-t)/2}}{\ell} \, dt \\
  & \leq P_T \left( y_{T,\ve} + \ell \sfGamma^{\tensorv^*}(f) \right)
  + \frac{2}{\alpha \ell} \left(1- e^{- \alpha T/2} \right).
\end{align*}
Multiplying with $e^{\alpha T/2}$ on both sides, letting $\ve \to 0$
and $\alpha \to k := \max\{ - \rRicH, \McalR\}$, we finally get that for any $\ell > 0$,
\begin{align} \label{hgradientb} & \sqrt{\sfGamma^{\tensorh^*}(P_Tf)}
  + \ell \sfGamma^{\tensorv^*}(P_T f) \leq e^{k T/2} P_T\left(
    \sqrt{\sfGamma^{\tensorh^*}(f)} + \ell \sfGamma^{\tensorv^*}(f)
  \right) + \ell^{-1} F_k(T),\end{align}
where 
\begin{equation*}F_k(t) = \begin{cases} \frac{2}{k} ( e^{k t/2} - 1) & \text{if } k >0, \\
    t & \text{if } k =0. \end{cases}  
\end{equation*}
Since this estimate holds pointwise, it holds for 
$\ell = \big(P_T\sfGamma^{\tensorv^*}(f)- \sfGamma^{\tensorv^*}(P_Tf)\big)^{-1/2}$ 
or $\ell = \infty$ at points where $P_T \sfGamma^{\tensorv^*}(f)- \sfGamma^{\tensorv^*}(P_Tf) = 0$. The resulting inequality is
\begin{align} \label{chgradientb} &  \sqrt{\sfGamma^{\tensorh^*}(P_Tf)} \leq  e^{k T/2} P_T \sqrt{\sfGamma^{\tensorh^*}(f)} + (e^{k T/2} +F_k(T)) \sqrt{P_T\sfGamma^{\tensorv^*}(f)- \sfGamma^{\tensorv^*}(P_tf)} .\end{align}

We will now show how this inequality implies \eqref{CondA}. Since $\tensorg$ is complete, there exist a sequence of compactly
supported functions $g_n \in C^\infty(M)$ satisfying $g_n \uparrow 1$ pointwise and $\|\sfGamma^{\tensorh^* + \tensorv^*}(g_n)\|_{L^\infty} \to 0.$ It follows from equation \eqref{vgradientb} and \eqref{chgradientb} that
$$\lim_{n \to \infty} \|\sfGamma^{\tensorh^* + \tensorv^*}(P_t g_n)\|_{L^\infty} \to 0$$
as well. Hence, since $P_t g_n \to P_t 1$ and $\| dP_t g_n\|_{\tensorg^*}$ approach $0$
uniformly, we have that $\sfGamma^{\tensorg^*}(P_t1) = 0$. It
follows that $P_t1 = 1$.

To finish the proof, consider a smooth function $f \in C^\infty(M)$ with $\|f\|_{L^\infty} < \infty$ and $\|\sfGamma^{\tensorh^* + \tensorv^*}(f)\|_{L^\infty} < \infty.$ Define $f_n = g_n f \in C^\infty(M)$. Then $P_Tf_n \to P_T f$ pointwise. It follows that
\begin{equation} \label{DCT} \int_a^b dP_T f(\dot \gamma(t)) \, dt 
= \lim_{n\to \infty} \int_a^b dP_T f_n \left(\dot \gamma(t)\right) \, dt \end{equation}
for any smooth curve $\gamma:[a,b] \to M$. We want to use the dominated convergence theorem to show that the integral sign and limit on the right side of \eqref{DCT} can be interchanged.

Without loss of generality, we may assume that $\|\sfGamma^{\tensorh^*}(g_n)\|_{L^\infty} < 1$ for any $n$. We then note that
$$\left\|\sqrt{\sfGamma^{\tensorh^* + \tensorv^*}(f_n)} \right\|_{L^\infty} \leq \|f\|_{L^\infty} + \left\| \sqrt{\sfGamma^{\tensorh^*}(f)} \right\|_{L^\infty}=:K < \infty.$$
This relation, combined with \eqref{vgradientb} and \eqref{chgradientb}, gives us
$$\|\sqrt{ \sfGamma^{\tensorh^*+ \tensorv^*} (P_Tf_n) }\|_{L^\infty} \leq \left( 2e^{kT/2} + F_k(T) +1 \right)K.$$
Furthermore, the dominated convergence theorem tells us that both $P_T \sfGamma^{\tensorv^*}(f_m-f_n)$ and 
$\lim_{n \to \infty} P_T \sfGamma^{\tensorh^*}(f_n - f_m) $ approach $0$ pointwise as $n,m \to \infty$. By inserting $f_n-f_m$ into \eqref{vgradientb} and \eqref{chgradientb}, we see that $\sfGamma^{\tensorh^* + \tensorv^*}(P_T f_n)$ at any fixed point is a Cauchy sequence and hence convergent. We conclude that
$$\int_a^b dP_T f (\dot \gamma(t)) dt = \int_a^b \left(\lim_{n \to \infty} dP_T f_n\right)(\dot \gamma(t)) dt.$$
It follows that $dP_t f - \lim_{n \to \infty} dP_t f$ vanishes outside a set of measure zero along any curve, so $$\|\sfGamma^{\tensorh^* + \tensorv^*}(P_T f) \|_{L^\infty} = \lim_{n \to \infty} \|\sfGamma^{\tensorh^* + \tensorv^*}(P_T f_n ) \|_{L^\infty} < \infty.$$
In conclusion, we have proven that condition \eqref{CondA} holds. Without any loss of
generality we can put $\ell = 1$, since we can obtain all the other
inequalities by replacing $f$ with $\ell f$.
\qed

\begin{remark}
  If we know that any $\frac{1}{2} \srL$-diffusion starting at a point
  has infinite lifetime then using Lemma \ref{lemma:SpecC}, we can
  actually make a probabilistic proof. We outline the proof
  here. We will only prove the inequality \eqref{vgradientb} as the
  proof of \eqref{hgradientb} is similar.

  We will again use $z_{t,\ve}$ as in \eqref{zte}. Let $ X = 
    X(x)$ be an $\frac{1}{2} \srL$-diffusion with $X_0(x) = x
  \in M$. We define $Z^{\ve}$ by $Z^{\ve}_t = z_{t,\ve}
  \circ X_t$.  Then $Z^{\ve}$ is a local submartingale by
  \eqref{ztesubmartingale}.  By using the Burkholder-Davis-Gundy
  inequality, there exist a constant $B$ such that
  \begin{align*} \expect\left[\sup_{0 \leq s \leq t} Z_s^{\ve}
    \right] \leq B \expect\left[\sqrt{ \langle Z^{\ve}
        \rangle_{t } } \right] + z_{0,\ve}(x) + \expect\left[
      \int_0^{t} (\partial_s - \frac{1}{2} \srL) z_{s,\ve} \circ X_s \,ds \right] \end{align*}
  where $\langle Z^{\ve}
  \rangle_t = \int_0^t \sfGamma^{\tensorh^*}(z_{s,\ve}) \circ X_s\, ds$ is the quadratic variation of $Z^{\ve}$. By
  the Cauchy-Schwartz inequality and the bound \eqref{vdoubleGamma},
  we get the conclusion
  \begin{align*}
    \expect\left[\sqrt{ \langle Z^{\ve} \rangle_t } \right] \leq
    p_{2t}(x,x)\, \sqrt{ \int_0^t \|\sfGamma^{\tensorv^*}_2(P_{T-s}
      f) \|_{L^1} \,dt} < \infty
  \end{align*}
  which means that $ \expect\left[\sup_{0 \leq s \leq t \wedge
      \tau} Z_s^{\ve} \right] <\infty$. Hence, $Z^{\ve}$ is a true submartingale, giving us \eqref{vgradientb}.
\end{remark}

\subsection{Interpretation of $\RicHV$} \label{sec:RicHV} Let $\calV$
be any integrable subbundle. Choose a subbundle $\calH$ such that $TM
= \calH \oplus \calV$.  Any such choice of $\calH$ correspond uniquely
to a constant rank endomorphism $\pr = \pr_{\calV}: TM \to \calV
\subseteq TM$. This can be considered as a splitting of the short
exact sequence $\calV \to TM \stackrel{F}{\to} TM/\calV.$

Let $\Omega(M)$ be the the exterior algebra of $M$ with $\integer
\times \integer$-grading of Section~\ref{sec:GradedForms}. Choose
nondegenerate metric tensors
$$\tensorv \in \Gamma(\Sym^2 \calV^*) \text{ and } \widecheck{\tensorg} \in \Gamma(\Sym^2 (TM/\calV)^*)$$ on $\calV$ and $TM/\calV$. Since $\bigwedge^\nu \calV^* \oplus \bigwedge^n (TM/\calV)^*$ is canonically isomorphic to  $\bigwedge^{n+\nu} T^*M$, the choices of $\tensorv$ and $\widecheck{\tensorg}$ gives us a volume form $\vol$ on $M$.

We also have an energy functional defined on projections to
$\calV$. Relative to $\pr$, define a Riemannian metric $\tensorg_{\pr}
= F^* \widecheck{\tensorg} + \pr^* \tensorv$. We introduce a
functional $E$ on the space of projections $\pr$ by
$$E(\pr) = \int_M \| \calR_{\pr}\|^2_{\wedge^2 \tensorg_{\pr}^* \otimes \tensorg_{\pr}} \dvol$$
where $\calR_{\pr}$ is the curvature of $\calH = \ker \pr$. We can
only be sure that the integral is finite if $M$ is compact, so we will
assume this, and consider our calculations as purely formal when this
is not the case.

Let $\nabla= \nabla^{\pr}$ be the restriction of the Levi-Civita
connection of $\tensorg_{\pr}$ to $\calV$.  Introduce a exterior
covariant derivative of $d_{\nabla}$ on $\calV$-valued forms in the
usual way, i.e. for any section $V \in \Gamma(\calV)$, we have
$d_{\nabla} V = \nabla_{\!\newbulletsub}^{\mathstrut}\! V$ and if $\alpha$ is a
$\calV$-valued $k$-form, while $\mu$ is a form in the usual sense,
then
$$d_\nabla (\alpha \wedge \mu) = (d_\nabla \alpha) \wedge \mu + (-1)^k \alpha \wedge d\mu.$$
We can split this operator into graded components $d_{\nabla} =
d^{1,0}_{\nabla} + d^{0,1}_{\nabla} + d^{2,-1}_{\nabla} $ and do the
same with its formal dual $\delta_{\nabla} = \delta^{-1,0}_\nabla +
\delta^{0,-1}_{\nabla} + \delta_{\nabla}^{2,-1}$.

\begin{proposition}
  The endomorphism $\pr$ is a critical value of $E$ if and only if
  $\delta^{-1,0}_{\nabla} \calR = 0$. In particular, if~$\tensorg$
  satisfies
  \begin{equation} \label{IIis0} \tr_{\calV} (\calL_{A} \tensorg)(\times, \times) = 0,
    \quad \text{for any } A \in \Gamma(\calH),\end{equation} then $\pr$ is a critical value if and
  only if $\RicHV = 0.$
\end{proposition}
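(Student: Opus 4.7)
The plan is as follows. Any variation $B = \dot\pr$ of the projection $\pr\colon TM \to \calV$ must satisfy $B|_\calV = 0$ and take values in $\calV$, so the tangent space to the space of such projections is canonically $\Gamma(\Ann(\calV) \otimes \calV) \cong \Gamma(\mathrm{Hom}(\calH, \calV))$; criticality will therefore be expressed as vanishing of a $\calV$-valued $(1,0)$-form. Since $\calR_\pr(V, \cdot) = 0$ for $V \in \calV$, the curvature descends via $F$ to a section $\widetilde\calR$ of $\bigwedge^2 (TM/\calV)^* \otimes \calV$, and the norm of $\widetilde\calR$ uses only the fixed metrics $\widecheck{\tensorg}$ on $TM/\calV$ and $\tensorv$ on $\calV$, together with the fixed volume form $\vol$. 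Hence $E(\pr) = \int_M \|\widetilde\calR\|^2 \dvol$ depends on $\pr$ only through $\widetilde\calR$.

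Next I would compute the first variation of $\calR$. Differentiating $\calR_{\pr_s}(X, Y) = \pr_s[(\id - \pr_s)X, (\id - \pr_s)Y]$ at $s=0$ with $\dot\pr = B$, and using $\pr[A, V] = \rnabla_A V$ for $A \in \Gamma(\calH)$, $V \in \Gamma(\calV)$, I expect to obtain, for horizontal $A, Z$,
\begin{equation*}
\dot\calR(A, Z) = B[A, Z] + \rnabla_Z(BA) - \rnabla_A(BZ).
\end{equation*}
Rewriting in terms of $\nabla = \nabla^\pr$ via the shape-operator correction $\rnabla_A V = \nabla^\pr_A V + S_A V$ (where $\tensorv(S_A V, W) = \tensorg(A, \II(V, W))$), I would identify $\dot\calR$ with $-\,d^{1,0}_\nabla B$ up to a tensorial correction that, upon integration against $\calR$, is absorbed into the formal adjoint $\delta^{-1,0}_\nabla$. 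Integration by parts then gives
\begin{equation*}
\tfrac{d}{ds}\big|_{s=0} E(\pr_s) = 2\int_M \langle \dot\calR, \widetilde\calR\rangle \dvol = -2\int_M \langle B, \delta^{-1,0}_\nabla \calR\rangle \dvol,
\end{equation*}
and since $B$ ranges freely over $\Gamma(\mathrm{Hom}(\calH, \calV))$, $\pr$ is a critical point of $E$ if and only if $\delta^{-1,0}_\nabla \calR = 0$.

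For the ``in particular'' statement I would expand $(\delta^{-1,0}_\nabla \calR)(Z) = -\sum_i (\nabla^\pr_{A_i} \calR)(A_i, Z)$ in a local orthonormal frame $\{A_i\}$ of $\calH$ and compare with $\RicHV(V, Z) = \tfrac{1}{2}\sum_i \tensorv\bigl(V, (\rnabla_{A_i}\calR)(A_i, Z)\bigr)$ for $V \in \Gamma(\calV)$. Replacing $\nabla^\pr$ by $\rnabla$ produces extra terms that are $\calH$-traces of the shape operator $S$ against $\calR$. The key identity is $(\calL_A \tensorg)(V, W) = -2\tensorg(A, \II(V, W))$ for $A \in \Gamma(\calH)$ and $V, W \in \Gamma(\calV)$, which gives $\tr_\calV S_A = -\tfrac{1}{2}\,\tr_\calV(\calL_A \tensorg)(\times, \times)$. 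Under \eqref{IIis0} this trace vanishes, so the mean-curvature contributions drop out and $\delta^{-1,0}_\nabla \calR = 0 \iff \RicHV = 0$.

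The main obstacle will be the careful bookkeeping of the two connections $\nabla^\pr$ and $\rnabla$ on $\calV$-valued forms: one must verify both that the correction between $\dot\calR$ and $-d^{1,0}_\nabla B$, and the correction between $\delta^{-1,0}_\nabla \calR$ and $-2$ times $\RicHV$ (viewed as a $\calV$-valued $(1,0)$-form), organize themselves into $\tensorg$-contractions against the mean-curvature vector $\tr_\calV \II$ of the foliation of $\calV$. Once this is done, \eqref{IIis0} gives the clean equivalence in the second part of the statement.
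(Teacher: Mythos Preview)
Your approach is essentially the paper's: compute the first variation of $E$ along $\pr_t = \pr + t\alpha$, integrate by parts to obtain $\delta^{-1,0}_\nabla\calR$, and then compare the local expression with $\RicHV$, the mean-curvature contributions dropping under \eqref{IIis0}. The paper organizes the local formula as $\delta^{-1,0}_\nabla\calR = -\sum_k(\znabla_{A_k}\calR)(A_k,\cdot) + \calR(N,\cdot)$ using the degree-$(0,0)$ part $\znabla$ of the Levi-Civita connection and an explicit mean-curvature vector $N$, rather than your $\rnabla$/shape-operator comparison, but this is exactly the bookkeeping you flag as the main obstacle; your observation that $\|\calR\|$ factors through $\widetilde\calR$ and hence uses only the fixed data $(\widecheck{\tensorg},\tensorv,\vol)$ is a clean way to see that the metric variation contributes nothing, which the paper leaves implicit.
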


Recall from Part I, Section~2.4 that condition \eqref{IIis0} is equivalent to the leafs of the foliation of $\calF$ being minimal submanifolds. If $\calV$ is the vertical bundle of a submersion $\pi\colon M \to B$,
then we can identify $TM/\calV$ with $\pi^* TB$. In this case, a critical value of $E$ can be considered as an optimal way
of choosing an Ehresmann connection on~$\pi$.

\begin{proof}
  We write $\id:= \id_{TM}$ for the identity on $TM$. Let $\pr$ be a
  projection to $\calV$ and $\alpha: TM \to \calV$ be any $\calV$-values one-from
  with $\calV \subseteq \ker \alpha$. Define a curve in the space
  projections $\pr_t = \pr + t\alpha$. Then
$$\tensorg_t(v,v) := \tensorg_{\pr_t}(v,v) = \tensorg_{\pr}(v,v) +2t \tensorv(\alpha v, \pr v) + t^2 \tensorv(\alpha v, \alpha v).$$
Let $\calR_t$ be the curvature of $\pr_t$. Then
\begin{align*} \calR_t(A,Z) &=   \calR(A,Z) + t \alpha [(\id -\pr) A, (\id - \pr)Z]  \\
  & \quad- t \left(\pr [\alpha A, (\id -\pr) Z ] + \pr [ (\id -\pr) A, \alpha Z
    ]\right) + O(t^2).\end{align*} If $\nabla^t = \nabla^{\pr_t}$, then
\begin{align*}\nabla^t_A V &= \nabla_A V + \frac{1}{2} t d_\nabla
  \alpha(A,V) - \frac{1}{2} t \shv \tensorg(d_\nabla \alpha(A,\newbullet), V),
  \quad\text{and}\\
  d_{\nabla^t} \pr_t &=d_{\nabla} \pr + \frac{1}{2} t (d_{\nabla}
  \alpha)_{1,1} - \frac{1}{2} t (d_{\nabla} \alpha )^\top_{1,1} + t d_{\nabla} \alpha +
  O(t^2),
\end{align*}
where $(d_\nabla \alpha)_{1,1}$ is the (1,1)-graded component of $d_\nabla
\alpha$ and $$\tensorv((d_{\nabla}\alpha)^\top_{1,1}(A,V_1),V_2) = \tensorv((d_\nabla
\alpha)_{1,1}(A,V_1),V_2).$$ Since $\calR_t = -d_{\nabla}^{2,-1} \pr_t$, we get
\begin{align*}
  \left. \frac{d}{dt} E(\pr_t) \right|_{t=0} &= \int_M (\wedge^2 \tensorg_{\pr}^* \otimes \tensorg_{\pr})( d_{\nabla}^{2,-1} \pr , d_\nabla^{1,0} \alpha) \dvol \\
  &= - \int_M (\tensorh^* \otimes \tensorv)(\delta_{\nabla}^{-1,0}
  \calR, \alpha) \dvol.
\end{align*}
Hence, $\pr$ is a critical value if and only if $\delta^{-1,0}_\nabla
\calR =0$.

We give a local expression for this identity. Let $A_1, \dots, A_n$ be
a local orthonormal basis of $\calH$. Then
\begin{align*}
  & \delta^{-1,0}_{\nabla} \calR = - \sum_{k=1}^n (\znabla_{A_k}
  \calR)(A_k, \newbullet) + \calR(N, \newbullet)
\end{align*}
where $N$ is defined by $\tensorg_{\pr}(A,N) = - \frac{1}{2} \tr_{\calV}
(\calL_{\pr_{\calH} A} \tensorg)(\times, \times)$ and $\znabla$ is the (0,0)-degree component of the
Levi-Civita connection, i.e.
$$\znabla_A Z = \pr_{\calH} \nabla_A \pr_{\calH} Z + \pr_{\calV} \nabla_A \pr_{\calV} Z.$$
This coincides with $\RicHV$ when \eqref{IIis0} holds.\qed
\end{proof}

\subsection{If $\calV$ is not integrable} \label{sec:NotIntegrable}
Let $(M, \tensorg)$ be a complete Riemannian manifold and let $\calH$ be a bracket-generating subbundle of $TM$
with orthogonal complement $\calV$. Define $\rnabla$ as in \eqref{rnabla} with respect to $\tensorg$ and the splitting $TM = \calH \oplus \calV$ and assume that $\rnabla \tensorg = 0$. Let $\srL$ be the sub-Laplacian defined relative to $\calV$ or equivalently to the volume form of $\tensorg$. Then it may happen that \eqref{CD} holds for $\srL$ even without assuming that $\calV$ is integrable. More precisely, we will need the condition
\begin{equation} \label{traceZero} \tr \overline{\calR}(v,\calR(v, \newbullet)) =0, \qquad v \in TM, \end{equation}
$$\calR(A,Z) = \pr_{\calV} [ \pr_{\calH} A, \pr_{\calH} Z], \quad \overline{\calR}(A,Z) = \pr_{\calH} [ \pr_{\calV} A, \pr_{\calV} Z], \quad A,Z \in \Gamma(TM).$$
We refer to $\calR$ and $\overline{\calR}$ as respectively the curvature and the co-curvature of $\calH$.

In Part~I, Section~3.8, we showed that Theorem~\ref{th:CD} and Proposition~\ref{prop:DoubleGamma} hold with the same definitions and with $\calV$ not integrable, as long as \eqref{traceZero} also holds. The same is true for Theorem~\ref{th:CondARiemann}. We give some brief details regarding this.

First of all, in Section~\ref{sec:GradedForms}, the exterior derivative $d$ now also has a part of degree $(-1,2)$, determined by
$$d^{-1,2} f =0, \quad d^{-1,2} \alpha = - \alpha \circ \overline{\calR}, \qquad f \in C^\infty(M),\ \alpha \in \Gamma(T^*M),$$
and hence, the co-differential has a degree $(1,-2)$-part. However, these do not have any significance for our calculations. More troubling is the fact that both Lemma~\ref{lemma:fromPartI}~(a) and the formula for $\rnabla_Z V_s|_{x_0}$ in Lemma~\ref{lemma:fromPartI}~(b) are false when $\calV$ is not integrable. However, \eqref{traceZero} ensures that $$\sum_{i=1}^n \tensorg(R^{\rnabla}(A_i, V) Z, A_i) = 0$$ for any orthonormal basis $A_1, \dots, A_n$ of $\calH$ and vertical vector field $V$, which is all we need for the proof of Lemma~\ref{lemma:srLDeltaCommute}. Furthermore the same proof is still holds even if now $\rnabla_Z V_s|_{x_0} = \frac{1}{2} \sharp \overline{\calR}(Z, \newbullet)|_{x_0}$ in Lemma~\ref{lemma:fromPartI}~(b), as the extra terms cancel out.

Once Lemma~\ref{lemma:srLDeltaCommute} holds, there is no problem with the rest of the proof of Theorem~\ref{th:CondARiemann}. See Part I, Section 4.6 for an example where this theorem holds.

\bibliographystyle{abbrv}  
\bibliography{Bibliography.bib}

\end{document}